\DeclareMathAlphabet{\mathfr}{U}{euf}{m}{n}
\theoremstyle{plain}
\newtheorem{theorem}{Theorem}[section]
\newtheorem{conjecture}[theorem]{Conjecture}
\newtheorem{proposition}[theorem]{Proposition}
\newtheorem{corollary}[theorem]{Corollary}
\newtheorem{lemma}[theorem]{Lemma}
\theoremstyle{remark}
\newtheorem{remark}{Remark}[section]
\newtheorem{definition}[remark]{Definition}
\numberwithin{equation}{section}
\newcommand{\Q}{\mathbb Q}
\newcommand{\Qbar}{{\overline{\mathbb Q}}}
\newcommand{\Gal}{\mathrm{Gal}}
\newcommand{\R}{\mathbb R}
\newcommand{\Z}{\mathbb Z}
\newcommand{\F}{\mathbb F}
\newcommand{\gM}{\mathfrak N}
\newcommand{\C}{\mathbb C}
\newcommand{\GL}{\mathrm{GL}}
\newcommand{\End}{\operatorname{End}}
\newcommand{\Hom}{\operatorname{Hom}}
\newcommand{\Frob}{\operatorname{Frob}}
\newcommand{\Cl}{\operatorname{Conj}}
\newcommand{\Aut}{\operatorname{Aut}}
\newcommand{\ord}{\operatorname{ord}}
\newcommand{\Sym}{\operatorname{Sym}}
\newcommand{\Spec}{\operatorname{Spec}}
\newcommand{\p}{\mathfrak{p}}
\newcommand{\Res}{\operatorname{Res}}
\newcommand{\Tr}{\operatorname{Tr}}
\newcommand{\im}{\mathrm{Im}}
\newcommand{\diag}{\mathrm{diag}}
\newcommand{\GSp}{\mathrm{GSp}}
\newcommand{\Sp}{\mathrm{Sp}}
\newcommand{\SU}{\mathrm{SU}}
\newcommand{\USp}{\mathrm{USp}}
\newcommand{\MT}{\mathrm{MT}}
\newcommand{\ST}{\mathrm{ST}}
\newcommand{\Hg}{\mathrm{Hg}}
\newcommand{\Lef}{\operatorname{L}}
\newcommand{\TL}{\operatorname{TL}}
\newcommand{\AST}{\operatorname{AST}}
\newcommand{\et}{\operatorname{et}}
\newcommand{\Unitary}{\mathrm{U}}
\begin{document}
\title{Equidistribution, $L$-functions, and Sato-Tate groups}
\date{\today}
\author{Francesc Fit\'e}

\begin{abstract}
In this survey note we present an approach to the generalization of Serre of the Sato-Tate Conjecture. The reader interested in a complete account is referred to the original references \cite{Ser68} and \cite{Ser12}. However, the present note may still be of interest, since we provide a few new examples and supply references to recent progress developed in this area.
\end{abstract}

\maketitle
\tableofcontents

\section{Introduction}

Equidistribution questions arise naturally in number theory and arithmetic geometry. The Dirichlet Theorem on arithmetic progressions, the Cebotarev Density Theorem, or the Sato-Tate Conjecture are well-known examples of them. In this survey note we aim to present an approach to a general framework, due to Serre, where equidistribution questions can be uniformly understood. There is no better option for the reader than to directly look at the original works \cite{Ser68} and \cite{Ser12}. However, the present note may still be of interest, since we supply references to recent progress developed in this field, and we have gathered a collection of a few new examples.

The (classical) Sato-Tate Conjecture concerns the Frobenius distribution of an elliptic curve $E$ defined over $\Q$ without complex multiplication. More precisely, for each prime $p$ of good reduction for $E$, define the quantity
$$
\overline a_p:=\frac{p+1-N_p(E)}{\sqrt p}\,.
$$
Here $N_p(E)$ denotes the number of points defined over $\F_p$ of the reduction of $E$ at $p$. Recall that the Hasse-Weil bound asserts that $|\overline a_p|\leq 2 $, and it thus seems a natural question to ask about the distribution of the \emph{normalized trace} $\overline a_p$ on the interval $[-2,2]$.
In 1968, Sato and Tate independently conjectured that the normalized $\overline a_p$'s are equidistributed on the interval $[-2,2]$ according to the semicircular measure
$$
\frac{1}{2\pi} \sqrt{4-z^2} dz\,.
$$ 
This is today known to hold true (see \cite{BLGHT11}). 

The \emph{generalized Sato-Tate conjecture}, as formulated by Serre (see \cite{Ser12} and \cite{Ser95} for the $\ell$-adic and motivic versions respectively), is a vast generalization of the statement of Sato and Tate, which in particular implies the Dirichlet theorem on arithmetic progressions and the Cebotarev density theorem, and which provides a general and conceptual frame in which equidistribution staments of (cohomological) data attached to arithmetico-geometric objects can be understood. Besides from the references cited above, it is worth mentioning that many of the features of the theory are already outlined in a letter \cite{Ser66} from Serre to Borel of 1966. 

The content of this note goes as follows. In \S\ref{section: equiv and L} we first recall the general strategy to prove equidistribution results (see \cite{Ser68}): one first identifies a compact group in terms of the conjugacy classes of which the equidistribution statement can be formulated; then one establishes the holomorphicity and nonvanishing at $s=1$ of certain $L$-functions attached to the nontrivial irreducible representations of the compact group. In fact, one could add one more step, in the sense that the holomorphicity and nonvanishing at $s=1$ of these $L$-functions is obtained by comparing them to the $L$-functions of a certain family of automorphic forms, for which the holomorphicity and nonvanishing at $s=1$ is known to hold true. Once the general strategy has been introduced, in \S\ref{subsection: examples L} we analyse four examples where equidistribution has been proved by means of a successful application of the method described. We also provide one example where equidistribution is just conjectural. 

In \S\ref{section: def and conj}, we define the Sato-Tate group of a smooth and projective variety and state the generalized Sato-Tate Conjecture. We thus restrict to a particular case of \cite[Chap. 8]{Ser12}, where, more generally, the case of a scheme of finite type over~$\Z$ is considered. We then revisit some of the examples of \S\ref{section: equiv and L} from this perspective. We review the connection between the Sato-Tate group and the Mumford-Tate group and we close the section by considering the case of absolutely simple abelian varieties with complex multiplication. This example illustrates the whole theory developed in \S\ref{section: equiv and L} and \S\ref{section: def and conj}.

In \ref{section: GT results}, we describe recent results on the classification of Sato-Tate groups for families of selfdual motives with rational coefficients and fixed weight and Hodge numbers (such as, for example, abelian surfaces defined over a number field). 
Much of the interest in this kind of classifications arised when computational methods permitted to perform numerical tests of the generalized Sato-Tate Conjecture for curves of low genus (see \cite{KS09}). We will conclude the section by gathering references to several results in this area, that have been obtained in the last years.  

Finally, let us just mention that Bucur and Kedlaya (see \cite{BK12}) have used an effective\footnote{By \emph{effective}, we mean here that there is an effective bound on the asymptotic error term implicit in any equidistribution statement.} form of the generalized Sato-Tate Conjecture to obtain an interesting arithmetic application: an effective upper bound for the smallest prime at which two non-isogenous elliptic curves have distinct Frobenius trace (conjectural to the generalized Riemann Hypothesis). Despite the beauty of the theory and the range of possibilities that it offers, we will not say anything else about the effective form of the generalized Sato-Tate Conjecture in this note.

\textbf{Acknowledgements.} A first draft of this note was used for a series of talks at ``Seminari de Teoria de Nombres de Barcelona" in February 2013. A second version was written for the occasion of the conference ``Quintas Jornadas de Teor\'ia de N\'umeros" held in Sevilla in July 2013 (although my talk in that occasion focused on \cite{FGL13}). 
This note also served as the basis for a series of three talks at the Winter School ``Frobenius distributions of curves" held in Luminy in February 2014. Thanks to the organizers of all three meetings. I am thankful to Xevi Guitart, Joan-C. Lario, and Andrew V. Sutherland for helpful comments, and I am gratefully indebted to the referee and to Jean-Pierre Serre for their numerous corrections. The author was funded by the German Research Council via CRC 701 and was also partially supported by MECD project MTM2012-34611. This work is devoted to the memory of Anna Maria Argerich.

\section{Equidistribution and $L$-functions}\label{section: equiv and L}

In this section, we first formalize the notion of equidistribution. Then, in the case of a compact group (our case of interest in all future sections), we show how equidistribution is connected to the holomorphicity and nonvanishing of certain $L$-functions attached to the nontrivial irreducible representations of the group. The exposition closely follows \cite[Chap.I]{Ser68}. This is still the general strategy that one follows to prove equidistribution results.

\subsection{Definition of equidistribution}

Let $X$ be a compact topological space and denote by $C(X)$ the Banach space of continuous, complex valued functions $f$ on $X$, with norm $||f||:=\sup_{x\in X}|f(x)|$. Let 
$$
\mu\colon C(X)\rightarrow \C
$$ 
be a Radon measure on $X$, that is, a continuous linear form on $C(X)$. Sometimes the integral notation is preferred, and the image $\mu(f)$ of $f\in C(X)$ by $\mu$ is  denoted by $\int_{x\in X}f(x)\mu(x)$. We require $\mu$ to be:
\begin{itemize} 
\item positive (i.e. if $f$ is real and positive, then so is $\mu(f)$); and 
\item of total mass $1$ (i.e. the image by $\mu$ of the constant function equal to~$1$ is~$1$).
\end{itemize}

\begin{definition}
Let $\{x_n\}_{n\geq 1}$ be a sequence of points of $X$. The sequence $\{x_n\}_{n\geq 1}$ is said to be equidistributed over $X$ with respect to $\mu$ (or, simply $\mu$-equidistributed) if for every $f\in C(X)$ we have
$$\mu(f)=\lim_{n\rightarrow \infty}\frac{1}{n}\sum_{i=1}^nf(x_i)\,.$$
\end{definition}

Note that if $\{x_n\}_{n\geq 1}$ is equidistributed with respect to $\mu$, then $\mu$ must be positive and of total mass $1$.

\subsection{The case of a compact group}\label{section: compact group}
Let $G$ be a compact group and let $X$ denote the set of conjugacy classes of $G$. Let $\mu$ be the Haar measure of $G$ (with $\mu(G)=1$). Denote also by $\mu$ the push-forward of $\mu$ on $X$, that is, if $\pi:G\rightarrow X$ denotes the natural projection, then define
$$
\mu(f):=\mu(f\circ \pi), \qquad\text{ for all $f\in C(X)$\,.}
$$

\begin{proposition}\label{proposition: equidistribution}
A sequence $\{x_n\}_{n\geq 1}$ of elements of $X$ is $\mu$-equidistributed if and only if for every irreducible nontrivial character $\chi$ of $G$, we have 
$$\lim_{n\rightarrow\infty}\frac{1}{n}\sum_{i=1} ^n\chi(x_i)=0\,.$$
\end{proposition}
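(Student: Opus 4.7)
My plan is to prove both directions by reducing to a density/approximation argument based on the Peter--Weyl theorem, viewing $C(X)$ as the subspace of continuous class functions on $G$ via the projection $\pi\colon G\to X$.

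The forward implication is the easy direction. If $\{x_n\}$ is $\mu$-equidistributed and $\chi$ is a nontrivial irreducible character, apply the definition to $f=\chi$ (regarded as a class function, hence as an element of $C(X)$). By the orthogonality of irreducible characters against the trivial character,
$$
\mu(\chi)=\int_G \chi(g)\,d\mu(g)=\langle \chi,1\rangle_{L^2(G,\mu)}=0,
$$
so the Cesàro averages tend to $0$, as required.

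For the converse, the strategy is to bootstrap from characters to arbitrary $f\in C(X)$ by uniform approximation. First observe that the stated hypothesis, together with the trivial equality $\frac{1}{n}\sum_{i=1}^n 1=1=\mu(1)$, yields the convergence $\frac{1}{n}\sum_{i=1}^n g(x_i)\to \mu(g)$ for $g$ the trivial character and, by $\C$-linearity, for any finite $\C$-linear combination of irreducible characters of $G$. The main input is the Peter--Weyl theorem: the $\C$-span of the matrix coefficients of the irreducible unitary representations of $G$ is uniformly dense in $C(G)$, and averaging under conjugation (equivalently, projecting onto the subspace of class functions) shows that the $\C$-span of the irreducible characters is uniformly dense in the space of continuous class functions on $G$, that is, in $C(X)$.

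Granting this, the proof is an $\varepsilon/3$-argument. Fix $f\in C(X)$ and $\varepsilon>0$; choose $g$ a finite $\C$-linear combination of irreducible characters with $\|f-g\|<\varepsilon$. Splitting
$$
\Bigl|\tfrac{1}{n}\sum_{i=1}^n f(x_i)-\mu(f)\Bigr|\leq \Bigl|\tfrac{1}{n}\sum_{i=1}^n(f-g)(x_i)\Bigr|+\Bigl|\tfrac{1}{n}\sum_{i=1}^n g(x_i)-\mu(g)\Bigr|+|\mu(g-f)|,
$$
the first and third terms are bounded by $\|f-g\|<\varepsilon$ (using that $\mu$ has total mass $1$), while the middle term tends to $0$ as $n\to\infty$ by the previous step. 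Since $\varepsilon$ is arbitrary, this gives $\mu$-equidistribution of $\{x_n\}$. The only nontrivial ingredient is Peter--Weyl; everything else is bookkeeping, and no finite-dimensionality or connectedness of $G$ is needed.
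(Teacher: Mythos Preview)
Your proof is correct and follows essentially the same approach as the paper: both invoke the Peter--Weyl theorem to obtain uniform density of the span of irreducible characters in $C(X)$, reduce the verification of equidistribution to checking it on characters, and use $\mu(1)=1$ together with $\mu(\chi)=0$ for nontrivial irreducible $\chi$. The only difference is that you spell out the $\varepsilon/3$ approximation argument that the paper leaves implicit.
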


\begin{proof} It is enough to check
$$\mu(\chi_a)=\lim_{n\rightarrow \infty}\frac{1}{n}\sum_{i=1}^n\chi_a(x_i)\,,\qquad \text{for every $a\in \mathcal A$,}$$
where $\{\chi_a\}_{a\in \mathcal A}$ is a family of continuous functions on $X$ such that their linear combinations are dense in $C(X)$. By the Peter-Weyl theorem, the set of irreducible characters $\chi$ of $G$ constitute such a family. The proposition then follows from the fact that $\mu(1)=1$ and $\mu(\chi)=0$ if $\chi$ is irreducible and nontrivial.
\end{proof}

\subsubsection{One application}\label{section: varyingfield} Let $E/\F_q$ be an elliptic curve defined over the finite field of $q=p^m$ elements. 
For $n\geq 1$, define the quantity
$$
\overline a_{q^ n}:=\frac{q^n+1-N_{q^n}(E)}{q^{n/2}}\,.
$$
Here $N_{q^n}(E)$ denotes the number of points of $E$ defined over $\F_{q^n}$.

\begin{proposition}\label{proposition: varyingfield} If $E/\F_q$ is ordinary, then the sequence $\{\overline a_{q^n}\}_{n\geq 1}$ is equidistributed on $[-2,2]$ with respect to the measure
\begin{equation}\label{equation: measure}
\frac{1}{\pi}\frac{dz}{\sqrt{4-z^2}}\,,
\end{equation}
where $dz$ is the restriction of the Lebesgue measure on $[-2,2]$.
\end{proposition}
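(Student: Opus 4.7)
The plan is to express the normalized traces as $\overline{a}_{q^n} = 2\cos(n\theta)$ for a fixed angle $\theta \in (0,\pi)$ and reduce the claim to an instance of Proposition~\ref{proposition: equidistribution} on the circle. Concretely, write $\alpha, \overline{\alpha}$ for the conjugate Frobenius eigenvalues of $E$, so that $\alpha \overline{\alpha} = q$ and $\alpha + \overline{\alpha} = a_q := q+1-N_q(E)$. Ordinariness, i.e., $p \nmid a_q$ (writing $q = p^m$), forces $\alpha \notin \R$, since $\alpha=\overline\alpha$ would require $a_q=\pm 2\sqrt{q}=\pm 2 p^{m/2}$, making $a_q$ divisible by $p$. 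Hence one may write $\alpha = \sqrt{q}\, e^{i\theta}$ with $\theta\in(0,\pi)$, and the Lefschetz trace formula applied to $E/\F_{q^n}$ gives $\overline{a}_{q^n} = (\alpha^n+\overline{\alpha}^n)/q^{n/2} = 2\cos(n\theta)$.

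Next, I would apply Proposition~\ref{proposition: equidistribution} to the compact abelian group $G=\Unitary(1)$, where conjugacy classes are points and the nontrivial irreducible characters are $\chi_k(z)=z^k$, $k\in\Z\setminus\{0\}$. Summing a geometric progression, the criterion of the proposition for the sequence $\{e^{in\theta}\}_{n\geq 1}$ becomes
$$
\frac{1}{N}\sum_{n=1}^N e^{ikn\theta} \to 0 \quad (N\to\infty), \qquad \text{for every } k\neq 0,
$$
which holds if and only if $e^{ik\theta}\neq 1$ for all $k\neq 0$, i.e., if and only if $\theta/\pi$ is irrational. Granting this, pushing forward the Haar measure on $\Unitary(1)$ along the continuous map $e^{i\phi}\mapsto 2\cos\phi$ yields exactly~\eqref{equation: measure} (a routine substitution $z = 2\cos\phi$), and by continuity this transfers the equidistribution of $\{e^{in\theta}\}$ on $\Unitary(1)$ to that of $\{\overline{a}_{q^n}\}$ on $[-2,2]$ with respect to~\eqref{equation: measure}.

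The key step — and the expected main obstacle — is therefore the irrationality of $\theta/\pi$, which is where ordinariness is essential. If $\theta/\pi = r/s$ in lowest terms, then $\alpha/\sqrt{q}$ is a $2s$-th root of unity, so $\alpha^{2s} = q^s$. Ordinariness means the Newton polygon of $T^2 - a_q T + q$ has slopes $0$ and $m$; equivalently, there exists an embedding $\sigma\colon \Qbar\hookrightarrow\overline{\Q}_p$ with $v_p(\sigma(\alpha))=0$. Then $0 = 2s\cdot v_p(\sigma(\alpha)) = v_p(q^s) = ms$, forcing $m=0$, a contradiction. This step cannot be omitted: for a supersingular $E/\F_p$ with $a_p=0$ one has $\alpha=\pm i\sqrt{p}$, $\theta/\pi=1/2$, and the sequence $\{\overline{a}_{p^n}\}$ is $4$-periodic, hence manifestly not equidistributed with respect to~\eqref{equation: measure}.
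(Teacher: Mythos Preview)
Your proof is correct and follows essentially the same approach as the paper: reduce to equidistribution of $\{\alpha^n/q^{n/2}\}$ on $\Unitary(1)$, apply Proposition~\ref{proposition: equidistribution} via the geometric-series computation, and invoke ordinariness to guarantee that $\alpha/\sqrt{q}$ is not a root of unity. The only difference is that you spell out the last step via the $p$-adic Newton-polygon argument (and add the supersingular counterexample), whereas the paper simply asserts that ordinariness ensures $\alpha/q^{1/2}$ is not a root of unity.
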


\begin{proof}
Recall that there exists $\alpha\in \C$ of absolute value $q^{1/2}$ such that $\overline a_{q^n}=(\alpha^ n+\overline \alpha ^ n)/q^{n/2}$. 
Let $\Unitary(1):=\{u\in \C^*\,|\, u\overline u =1\}$ be the unitary group  
of degree one, that is, the group of complex numbers of absolute value $1$. Since the projection of the Haar measure of $\Unitary(1)$ on the interval $[-2,2]$ by the map $u\mapsto z:=u+\overline u$ is the measure (\ref{equation: measure}), it suffices to show that the sequence $\{\alpha^n/q^{n/2}\}_{n\geq 1}$ is equidistributed on $\Unitary (1)$ with respect to the Haar measure. Observe that the nontrivial irreducible characters of $\Unitary(1)$ are of the form 
\begin{equation}\label{equation: phia}
\phi_a\colon \Unitary(1)\rightarrow \C^*\\,\qquad \phi_a(u):=u^ a\,,
\end{equation}
for some $a\in \Z^ *$. We conclude by applying Proposition \ref{proposition: equidistribution}, after noting that
\begin{equation}\label{equation: geosum}
\lim_{n\rightarrow\infty}\frac{1}{n}\sum_{i=1} ^n\frac{\alpha^{ia}}{q^{ia/2}}=\lim_{n\rightarrow\infty}\frac{1}{n}\frac{\alpha^{a(n+1)}/q^{a(n+1)/2}-\alpha^a/q^{a/2}}{\alpha^{a}/q^{a/2}-1}=0\,.
\end{equation}
Observe that the hypothesis of $E$ being ordinary ensures that $\alpha/q^{1/2}$ is not a root of unity, and thus $\alpha^{a}/q^{a/2}-1$ is nonzero for every $a\in \Z^*$. 
\end{proof}

See \cite{AS12} for analogous results for smooth projective curves $C/\F_q$ of arbitrary genus $g\geq 1$ whose Frobenius eigenvalues satisfy a certain \emph{neatness} condition\footnote{It is precisely the neatness condition what permits to reproduce the computation of (\ref{equation: geosum}) in the general case, obtaining a geometric series whose ratio can be guaranteed not to be 1.} of multiplicative independence. Zarhin has completely characterized the neat abelian varieties of dimension $g\leq 3$ (see \cite{Zar14}). 

\subsection{The connection with $L$-functions}\label{section: connL}

Let $P$ be the set of primes of a number field $F$ and let $S$ be a finite subset of $P$. Assume we are given an ordering $\{\p_i\}_{i\geq 1}$ of $P\setminus S$ by norm, that is, $N(\p_i)\leq N(\p_j)$ if and only if $i\leq j$, where $N(\p)$ denotes the absolute norm of $\p$. Let $G$ be a compact group, $X$ the set of its conjugacy classes, and $\{x_{\p_i}\}_{i\geq 1}$ a sequence of elements of $X$ indexed by $\{\p_i\}_{i\geq 1}$ (we are thus in the setting of \S\ref{section: compact group}). As before, let $\mu$ denote the Haar measure of $G$.

For $\varrho\colon G\rightarrow \GL_d(\C)$ a continuous irreducible representation of $G$, define the Euler product
\begin{equation}\label{equation: Lfunc}
L(\varrho,s):=\prod_{i\geq 1}\det(1-\varrho(x_{\p_i}) N(\p_i)^{-s})^{-1}
\end{equation}
for a complex $s\in \C$ with $\Re(s)> 1$. 

\begin{theorem}\label{theorem: equidistribution} For a given sequence $\{x_{\p_i}\}_{i\geq 1}\subseteq X$ and for any irreducible representation $\varrho$ of $G$, assume that $L(\varrho,s)$ is meromorphic for $\Re(s)\geq 1$ and has no zero and no pole in this halfplane except possibly at $s=1$. Then, the sequence $\{x_{\p_i}\}_{i\geq 1}$ is $\mu$-equidistributed over $X$ if and only if, for every irreducible nontrivial representation $\varrho$ of $G$, the Euler product $L(\varrho,s)$ extends to a holomorphic function on $\Re(s)\geq 1$ and is nonvanishing at $s=1$.
\end{theorem}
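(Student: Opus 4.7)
The plan is to invoke Proposition~\ref{proposition: equidistribution} to rephrase $\mu$-equidistribution as the vanishing
$$
\lim_{n\to\infty}\frac{1}{n}\sum_{i=1}^n\chi_\varrho(x_{\p_i})=0
$$
for every nontrivial irreducible character $\chi_\varrho$ of $G$, and then to connect these partial averages to the analytic behaviour of $L(\varrho,s)$ at $s=1$.

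To bridge the two sides, I would take logarithms in (\ref{equation: Lfunc}). The expansion $-\log\det(1-T)=\sum_{k\geq 1}\Tr(T^k)/k$ applied to $T=\varrho(x_{\p_i})N(\p_i)^{-s}$ yields
$$
\log L(\varrho,s)=\sum_{i\geq 1}\frac{\chi_\varrho(x_{\p_i})}{N(\p_i)^s}+\sum_{i\geq 1}\sum_{k\geq 2}\frac{\chi_\varrho(x_{\p_i}^k)}{k\,N(\p_i)^{ks}}.
$$
Compactness of $G$ bounds each character by $\dim\varrho$, so the double sum converges absolutely on $\Re(s)>1/2$ and, in particular, is $O(1)$ as $s\to 1^+$. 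The standing hypothesis says that $L(\varrho,s)$ has a well-defined integer order $m(\varrho)$ at $s=1$ and is holomorphic and non-vanishing elsewhere on $\Re(s)\geq 1$, whence
$$
\log L(\varrho,s)=-m(\varrho)\log(s-1)+O(1)\quad\text{as $s\to 1^+$.}
$$
Holomorphy and non-vanishing of $L(\varrho,s)$ at $s=1$ thus amount to $m(\varrho)=0$, which is equivalent to boundedness of the Dirichlet series $\sum_i \chi_\varrho(x_{\p_i})N(\p_i)^{-s}$ as $s\to 1^+$.

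The remaining step is to translate between this analytic condition and natural averages. For the trivial representation, $L(1,s)$ differs from the Dedekind zeta function $\zeta_F(s)$ only by finitely many Euler factors, so its simple pole at $s=1$ supplies, via a classical Tauberian theorem, the counting asymptotic $\#\{i\,:\,N(\p_i)\leq x\}\sim x/\log x$ that underlies the normalization $1/n$ in the definition of equidistribution. For a nontrivial irreducible $\varrho$, the implication $(\Leftarrow)$ feeds the bounded Dirichlet series above into a Wiener--Ikehara--Delange type Tauberian theorem to obtain $\sum_{i\leq n}\chi_\varrho(x_{\p_i})=o(n)$; conversely, $(\Rightarrow)$ is handled by partial summation, which combined with $N(\p_n)\sim n\log n$ converts the hypothesis $\sum_{i\leq n}\chi_\varrho(x_{\p_i})=o(n)$ into boundedness of $\log L(\varrho,s)$ near $s=1$, forcing $m(\varrho)=0$.

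The main obstacle is the Tauberian step: classical Wiener--Ikehara requires non-negative coefficients, whereas $\chi_\varrho(x_{\p_i})$ is only uniformly bounded in $\C$. This is resolved either by decomposing $\chi_\varrho$ into the four non-negative parts of its real and imaginary components and applying Wiener--Ikehara to each, or by invoking a refinement (such as Delange's) tailored to bounded complex coefficients; in either case, the uniform bound $|\chi_\varrho|\leq\dim\varrho$ furnished by the compactness of $G$ is precisely what makes the step go through.
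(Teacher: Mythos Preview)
Your proposal is correct and follows essentially the same route as the paper: reduce equidistribution to the vanishing of character averages via Proposition~\ref{proposition: equidistribution}, separate off the $k\geq 2$ terms as a function holomorphic for $\Re(s)>1/2$, and then invoke a Tauberian theorem to pass between the analytic behaviour at $s=1$ and partial sums, using the Prime Number Theorem for $F$ to handle the normalisation. You are also more explicit than the paper about the non-negativity hypothesis in Wiener--Ikehara and how the bound $|\chi_\varrho|\leq\dim\varrho$ repairs it.

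The one genuine difference is that the paper works with the logarithmic derivative $L'(\varrho,s)/L(\varrho,s)$ rather than $\log L(\varrho,s)$. This produces a Dirichlet series with coefficients $\chi_\varrho(x_{\p_i})\log N(\p_i)$ and, crucially, a \emph{simple pole} at $s=1$ with residue equal to the order $c$, so that the classical Wiener--Ikehara theorem applies directly to yield $\sum_{N(\p_i)\leq n}\chi_\varrho(x_{\p_i})\log N(\p_i)=cn+o(n)$; an Abel summation then removes the logarithmic weight. Your choice of $\log L$ leads instead to a logarithmic singularity $-m(\varrho)\log(s-1)$, and extracting $\sum_{N(\p_i)\leq x}\chi_\varrho(x_{\p_i})=o(x/\log x)$ from this requires a Tauberian statement one step beyond the textbook Wiener--Ikehara (Delange-type, as you note). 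Both approaches work, but the paper's buys a cleaner Tauberian step at the cost of one differentiation; in your $(\Rightarrow)$ direction, note also that partial summation from $o(n)$ yields $o(\log(1/(s-1)))$ rather than literal boundedness, which is still enough to force $m(\varrho)=0$.
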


\begin{proof}
Let $\chi$ denote the character of $\varrho$. We will prove the following claim: $L(\varrho,s)$ extends to a holomorphic and nonvanishing function on $\Re(s)\geq 1$ if and only if
$$
\sum_{N(\p_i)\leq n}\chi(x_{\p_i})=o\left( \frac{n}{\log(n)}\right)\,,\qquad n\rightarrow \infty\,.
$$
Then the theorem follows from Proposition \ref{proposition: equidistribution} and the Prime Number Theorem for the number field $F$, which ensures that the number of $\p_i$'s with $N(\p_i)\leq n$ is equivalent to $n/\log(n)$ when $n\rightarrow \infty$. In order to prove the claim, write
$$
L(\varrho,s)=\prod_{i\geq 1}\prod_{j=1}^d\frac{1}{1-\lambda_{i,j}N(\p_i)^{-s}}\,,
$$
where $\lambda_{i,j}$ for $j=1,\dots, n$ are the eigenvalues of $\varrho(x_{\p_i})$. The logarithmic derivative of $L(\varrho,s)$ is
$$
\frac{L'(\varrho,s)}{L(\varrho,s)}=-\sum_{i\geq 1}\sum_{j=1}^d\sum_{m\geq 1}\frac{\lambda_{i,j}^m\log(N(\p_i))}{N(\p_i)^{ms}}=-\sum_{i\geq 1}\sum_{m\geq 1}\frac{\chi(x_{\p_i}^m)\log(N(\p_i))}{N(\p_i)^{ms}}\,.
$$
Since 
$$
\sum_{i\geq 1}\sum_
{m \geq 2}\frac{\log(N(\p_i))}{|N(\p_i)^{ms}|}
$$
converges for $\Re(s)> 1/2$, we can write
\begin{equation}\label{equation: logder}
\frac{L'(\varrho,s)}{L(\varrho,s)}=F(s) +\phi(s)\,,
\end{equation}
where $\phi(s)$ is holomorphic for $\Re(s)>1/2$ and $F(s)= -\sum_{i\geq 1}\frac{\chi(x_{\p_i})\log(N(\p_i))}{N(\p_i)^{s}}$. By hypothesis, $L(\varrho,s)$ is meromorphic for $\Re(s)\geq 1$ and has no zero and no pole except for (possibly) a zero at $s=1$, say of order\footnote{By this we mean that $L(\varrho,s)$ has a pole (resp. a zero) of order $c$ (resp. $-c$) at $s=1$.} $-c$. Then $L'(\varrho,s)/L(\varrho,s)$ is meromorphic for $\Re(s)\geq 1$ with at most one simple pole at $s=1$ with residue~$c$. Since $\phi(s)$ is holomorphic for $\Re(s)>1/2$, (\ref{equation: logder}) shows that $F(s)$ is also meromorphic for $\Re(s)\geq 1$ with at most one simple pole at $s=1$ with residue~$c$. Then, the Wiener-Ikehara Theorem applied to $F(s)$ implies that
$$
\sum_{N(\p_i)\leq n}\chi(\p_i)\log(N(\p_i))=cn+o(n)\,,\qquad n\rightarrow \infty\,.
$$
Finally, the Abel summation trick yields
$$
\sum_{N(\p_i)\leq n}\chi(x_{\p_i})=c\frac{n}{\log(n)}+o\left(\frac{n}{\log(n)}\right)\,,\qquad n\rightarrow \infty\,,
$$
from which the claim follows.
\end{proof}

\subsection{Some known cases and a conjectural example}\label{subsection: examples L}

Throughout this section $F$ denotes a number field, $P$ its set of primes, and $\{\p_{i} \}_{i\geq 1}$ an ordering by norm of $P\setminus S$. We will consider four examples in which:
\begin{itemize}
\item we specify $G$, $X$, $\mu$, $S$, and the sequence $\{x_{\p_i}\}_{i\geq 1}$; 
\item the nonvanishing $L$-function condition of Theorem \ref{theorem: equidistribution} is known to be true. 
\end{itemize}
In other words, we consider four examples in which equidistribution of the sequence $\{x_{\p_i}\}_{i\geq 1}$ with respect to the Haar measure of $G$ is known to be true.
To conclude we will present a conjectural example.

\subsubsection{Cebotarev Density Theorem}\label{section: Chebotarev} 

Let $G$ be the Galois group of a finite Galois extension $L/F$ and $X$ the set of conjugacy classes of $G$. Then $\mu$ is just the discrete measure on $X$ (with masses corresponding to the size of each conjugacy class). Let $S$ be the set of ramified primes. For $\p\not\in S$, let $x_\mathfrak p$ be the Frobenius element $\Frob_\p\in X$ at $\p$.

A representation $\varrho$ of $\Gal(L/F)$ is called an Artin representation, and one may associate to $\varrho$ an Artin $L$-function (see \cite[Chap. VII]{Neu92}). If they have dimension~ $1$, we can identify them, via Artin reciprocity, with unitarized Hecke characters, and we refer to their associated $L$-functions by Hecke $L$-functions. We remark that the $L$-function $L(\varrho,s)$ defined by (\ref{equation: Lfunc}) differs from the Artin $L$-function attached to $\varrho$ in only a finite number of holomorphic and nonvanishing Euler factors. 

\begin{theorem}[Hecke]\label{theorem: Hecke} The Hecke $L$-function of a nontrivial unitarized Hecke character is holomorphic and nonvanishing for $\Re(s)\geq 1$.
\end{theorem}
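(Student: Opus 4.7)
The plan is to split the proof into two stages: meromorphic continuation (together with the functional equation) on one hand, and nonvanishing on the closed half-plane $\Re(s) \geq 1$ on the other, the latter treated separately on the open half-plane $\Re(s) > 1$ and on the boundary line $\Re(s) = 1$.

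For the analytic continuation I would follow Hecke's original route via theta series (or, equivalently and more conceptually, Tate's thesis). One writes $L(\chi, s)$ as a sum over integral ideals grouped by ideal class, multiplies by the archimedean $\Gamma$-factors and a power of the conductor, and recognizes the result $\Lambda(\chi, s)$ as a Mellin transform of a theta function attached to $\chi$. Poisson summation applied to that theta series then produces the functional equation
$$
\Lambda(\chi, s) = W(\chi)\, \Lambda(\overline{\chi}, 1-s),
$$
and simultaneously provides the meromorphic continuation of $\Lambda(\chi, s)$ to all of $\C$. The key point, where the hypothesis that $\chi$ is nontrivial enters, is that the ``constant term'' that creates the pole in the case of $\zeta_F$ vanishes here because $\chi$ is nontrivial on the id\`ele class group; consequently $\Lambda(\chi, s)$ is entire, and since the $\Gamma$-factors are nonzero and holomorphic on $\Re(s)\geq 1$, so is $L(\chi, s)$.

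Nonvanishing on $\Re(s) > 1$ is immediate from the absolute convergence of the Euler product
$$
L(\chi, s) = \prod_{\p \notin S} (1 - \chi(\p) N(\p)^{-s})^{-1},
$$
each factor of which is nonzero in that region.

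The delicate step, which I expect to be the main obstacle, is nonvanishing on the line $\Re(s) = 1$. Here I would adapt the classical de la Vall\'ee Poussin trick. For $\sigma > 1$ and $t \in \R$, the inequality $3 + 4\cos\theta + \cos 2\theta = 2(1+\cos\theta)^2 \geq 0$, applied termwise to the Dirichlet series for $3\log\zeta_F(\sigma) + 4\,\Real\log L(\chi, \sigma+it) + \Real\log L(\chi^2, \sigma+2it)$, yields
$$
|\zeta_F(\sigma)^3 \, L(\chi, \sigma+it)^4 \, L(\chi^2, \sigma+2it)| \geq 1.
$$
Assuming $L(\chi, 1+it_0) = 0$ for some $t_0 \in \R$, the middle factor would vanish to order at least $4$ as $\sigma \to 1^+$, the cube of $\zeta_F$ would blow up only to order $3$, and the third factor would stay bounded, forcing the left-hand side to tend to $0$ and contradicting the inequality. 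The one case this argument misses is $\chi^2 = 1$ with $t_0 = 0$, in which $L(\chi^2, s)$ itself has a pole at the relevant point; for that exceptional situation I would argue directly by factoring $\zeta_K(s) = \zeta_F(s)\,L(\chi, s)$ for the quadratic extension $K/F$ cut out by $\chi$ and using the simple pole of $\zeta_K(s)$ at $s=1$ together with that of $\zeta_F(s)$ to conclude $L(1, \chi) \neq 0$.
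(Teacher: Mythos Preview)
The paper itself gives no proof of this theorem: it simply cites Hecke's original paper \cite{Hec20} and Chapter~XV of Lang's \emph{Algebraic Number Theory}. Your sketch is essentially the standard argument one finds in those references (theta series or Tate's thesis for the continuation and entirety, Euler product for $\Re(s)>1$, and the $3+4\cos\theta+\cos 2\theta$ trick on the line $\Re(s)=1$), so in that sense you have supplied considerably more than the paper does.

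One small imprecision is worth flagging. In the de la Vall\'ee Poussin step, the exceptional case is not only ``$\chi^2=1$ and $t_0=0$'': the third factor $L(\chi^2,\sigma+2it_0)$ fails to stay bounded as $\sigma\to 1^+$ precisely when the unitarized Hecke character $\p\mapsto \chi(\p)^2 N(\p)^{-2it_0}$ is trivial, which for infinite-order Gr\"ossencharaktere can happen with $t_0\neq 0$. This is easily absorbed into your argument: replacing $\chi$ by the character $\psi$ with $\psi(\p)=\chi(\p)N(\p)^{-it_0}$ gives a nontrivial unitarized Hecke character with $L(\chi,1+it_0)=L(\psi,1)$ and $\psi^2=1$, so $\psi$ is a genuine quadratic character and your class-field-theoretic factorisation $\zeta_K(s)=\zeta_F(s)L(\psi,s)$ then applies verbatim.
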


\begin{proof}
See Hecke's historical reference \cite{Hec20} or \cite[Chap. XV]{La94}.
\end{proof}

\begin{theorem}\label{theorem: Art L} The Artin $L$-function of an irreducible nontrivial Artin representation is holomorphic and nonvanishing\footnote{In fact, Artin's Conjecture states that the Artin $L$-function of a nontrivial irreducible Artin representation is holomorphic on the whole complex plane. Artin proved the conjecture for representations of dimension 1, and several cases of dimension 2 with $F=\Q$ are also known (see \cite[\S10]{KW09}).} for $\Re (s)\geq 1$.
\end{theorem}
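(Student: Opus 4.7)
The plan is to reduce the statement to Hecke's Theorem \ref{theorem: Hecke} via Brauer's induction theorem. Concretely, if $\chi$ denotes the character of the given Artin representation $\varrho$ of $\Gal(L/F)$, Brauer's theorem allows us to write
\[
\chi=\sum_{i} n_i\, \mathrm{Ind}_{H_i}^{\Gal(L/F)}\chi_i,\qquad n_i\in\Z,
\]
where each $\chi_i$ is a one-dimensional character of a subgroup $H_i\subseteq \Gal(L/F)$. By the formal properties of Artin $L$-functions (inductivity and additivity for direct sums of representations), this gives the factorization
\[
L(\varrho,s)=\prod_{i} L(\chi_i,s)^{n_i},
\]
where $L(\chi_i,s)$ is the Artin $L$-function of $\chi_i$, viewed as a character of $\Gal(L/L^{H_i})$.

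Next, I would invoke Artin reciprocity to identify each $\chi_i$ with a finite-order Hecke character of the intermediate field $L^{H_i}$. If $\chi_i$ is nontrivial, Theorem \ref{theorem: Hecke} ensures $L(\chi_i,s)$ is holomorphic and nonvanishing on $\Re(s)\geq 1$. If $\chi_i$ is trivial, then $L(\chi_i,s)$ is (up to finitely many holomorphic, nonvanishing Euler factors) the Dedekind zeta function $\zeta_{L^{H_i}}(s)$, which is holomorphic and nonvanishing on $\Re(s)\geq 1$ except for a simple pole at $s=1$. Combined with the fact that the Euler product \eqref{equation: Lfunc} converges absolutely and is nonvanishing for $\Re(s)>1$, it follows that $L(\varrho,s)$ is meromorphic on $\Re(s)\geq 1$ with all poles and zeros concentrated at $s=1$.

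It remains to control the order of $L(\varrho,s)$ at $s=1$. From the factorization, this order equals $-\sum_{i:\chi_i=1}n_i$. By Frobenius reciprocity,
\[
\sum_{i:\chi_i=1}n_i=\sum_i n_i\, \bigl\langle \mathrm{Ind}_{H_i}^{\Gal(L/F)}\chi_i,\, 1\bigr\rangle=\langle \chi, 1\rangle,
\]
which vanishes since $\chi$ is irreducible and nontrivial. Hence $L(\varrho,s)$ is holomorphic and nonvanishing at $s=1$, completing the proof.

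The main obstacle—and the reason the argument is non-elementary—is the invocation of Brauer's induction theorem, which supplies the integer (as opposed to merely rational, as in Artin's original induction theorem) decomposition of $\chi$ required to keep the $n_i$ exponents in the factorization so that one does not need to take fractional powers of $L$-functions. Once Brauer's theorem is granted, the rest is a formal bookkeeping exercise using Hecke's theorem, the pole of the Dedekind zeta function, and Frobenius reciprocity.
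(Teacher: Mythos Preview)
Your proof is correct and follows the same approach as the paper: Brauer's induction theorem combined with Hecke's Theorem~\ref{theorem: Hecke}. Your version is in fact more detailed than the paper's, which omits the Frobenius reciprocity argument you give to handle the potential pole/zero at $s=1$ coming from trivial $\chi_i$'s.
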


\begin{proof} By Theorem \ref{theorem: Hecke}, the $L$-function of a nontrivial unitarized Hecke character is holomorphic and nonvanishing for $\Re(s)\geq 1$. Brauer's theorem on induced characters implies that Artin $L$-functions are products of positive and negative integral powers of Hecke $L$-functions. Therefore, Artin $L$-functions are also holomorphic and nonvanishing for $\Re(s)\geq 1$.
\end{proof}

Theorems \ref{theorem: equidistribution} and \ref{theorem: Art L}, imply that the sequence $\{x_{\p_i}\}_{i\geq 1}$ is equidistributed on $X$. Since $\mu$ is the discrete measure on $X$, we recover the Cebotarev Density Theorem.

\begin{corollary}[Cebotarev Density Theorem]\label{corollary: Cebotarev} Let $c$ be a conjugacy class of $\Gal(L/F)$. Then the density of the set of primes $\p$ such that $\Frob_\p= c$ is
$
\frac{\#c}{[L:F]}.
$  
\end{corollary}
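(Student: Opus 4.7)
The plan is to apply Theorem \ref{theorem: equidistribution} to the sequence $\{\Frob_{\p_i}\}_{i \geq 1}$ in $X$, using Theorem \ref{theorem: Art L} as the necessary analytic input, and then to evaluate the resulting Haar equidistribution on the indicator function of the conjugacy class $c$.

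First I would verify the hypotheses of Theorem \ref{theorem: equidistribution}. For each irreducible representation $\varrho$ of $\Gal(L/F)$, the Euler product $L(\varrho,s)$ defined by (\ref{equation: Lfunc}) differs from the Artin $L$-function of $\varrho$ only in the Euler factors at the finitely many primes of $S$, each of which is a holomorphic and nonvanishing function on $\Re(s) > 0$. If $\varrho$ is trivial, the Artin $L$-function agrees (up to such factors) with the Dedekind zeta function $\zeta_F(s)$, which has a simple pole at $s=1$ and is otherwise holomorphic and nonvanishing on $\Re(s) \geq 1$. If $\varrho$ is nontrivial and irreducible, Theorem \ref{theorem: Art L} provides holomorphicity and nonvanishing throughout $\Re(s) \geq 1$. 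Both the meromorphy hypothesis and the nonvanishing conclusion of Theorem \ref{theorem: equidistribution} are therefore met, so $\{\Frob_{\p_i}\}_{i \geq 1}$ is $\mu$-equidistributed over $X$.

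To conclude, I would apply this equidistribution to the characteristic function $\mathbf{1}_c \in C(X)$, which is continuous because the finite group $G$ makes $X$ a finite discrete space. The ordering of primes by norm, combined with the prime number theorem for $F$ used in the proof of Theorem \ref{theorem: equidistribution}, identifies
$$
\lim_{n \to \infty}\frac{1}{n}\sum_{i=1}^n \mathbf{1}_c(\Frob_{\p_i})
$$
with the density of the set $\{\p \in P \setminus S : \Frob_\p = c\}$, and equidistribution forces this limit to equal $\mu(\mathbf{1}_c)$. Since $\mu$ is the push-forward of the normalized counting measure on $G$ under $\pi \colon G \to X$, and $\pi^{-1}(c) = c$ has cardinality $\#c$, one computes $\mu(\mathbf{1}_c) = \#c/|G| = \#c/[L:F]$, as claimed. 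The main substantive obstacle is absorbed entirely into Theorem \ref{theorem: Art L} (which in turn rests on Brauer's induction theorem and Hecke's Theorem \ref{theorem: Hecke}); once that analytic input is granted, the remainder of the argument is purely formal.
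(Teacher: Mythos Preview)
Your proposal is correct and follows exactly the route the paper takes: the paper simply remarks that Theorems~\ref{theorem: equidistribution} and~\ref{theorem: Art L} give equidistribution of $\{x_{\p_i}\}$ on $X$, and that since $\mu$ is the discrete measure this is Cebotarev. You have unpacked this into the verification of the hypotheses and the evaluation of $\mu(\mathbf{1}_c)$, which is precisely the intended argument.
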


\subsubsection{Equidistribution of eigenvalues of algebraic Hecke characters attached to a quadratic imaginary field $K$}\label{section: CM ST} 
In this section, we will look at a particular class of Hecke characters, that is, those that are algebraic and are attached to an imaginary quadratic field~$K$ (throughout this section we have $F=K$). Reviewing their connection with CM modular forms, we will see how the Sato-Tate Conjecture for CM elliptic curves defined over $K$, follows from Theorem \ref{theorem: Hecke}. 

Let $\gM$ be an ideal of $K$, and $l\geq 1$. Let $I_{\gM}$ stand for the group of fractional ideals of $K$ coprime to $\gM$. An algebraic Hecke character\footnote{Via Artin reciprocity and unitarizing it so that it has image in $\Unitary(1)$, we may see $\psi$ as a character of $\Gal(L/K)$, where $L$ is the ray class field of $K$ of modulo $\gM$. We thus see that the notion of Hecke character in this section is compatible with that in \S\ref{section: Chebotarev}.} of $K$ of modulo $\gM$ and type at infinity $l$ is a homomorphism
\begin{equation}\label{equation: Hecke char}
\psi\colon I_{\gM}\rightarrow \C^*
\end{equation}
such that $\psi(\alpha\mathcal O_K)=\alpha^l$ for all $\alpha\in K^ *$ with $\alpha\equiv^\times 1 \pmod{\gM}$\footnote{For a number field $L$, an ideal $\gM$ of the ring of integers of $L$, and $\alpha_1,\alpha_2\in L^*$, we write $\alpha_1 \equiv ^\times \alpha_2 \pmod \gM$ if $\alpha_1$ and $\alpha_2$ are multiplicatively congruent modulo $\gM$, i.e., if $v(\alpha_2/\alpha_1 -1)\geq v(\gM)$ for every discret valuation~$v$ of~$L$.}.
We extend~$\psi$ by defining it to be $0$ for all fractional ideals of $K$ that are not coprime to $\gM$. We say that $\gM$ is the conductor of $\psi$ if the following holds: if $\psi$ is defined modulo $\gM'$, then $\gM|\gM'$. The $L$-function of $\psi$ is then defined by
$$
L(\psi,s):=\prod_{\p}(1-\psi(\p)N(\p)^{-s})^{-1}\,,
$$
where the product runs over all prime ideals of $K$. Let $G$ be the unitary group $\Unitary(1)$. Since it is abelian, we have $X=G$ and since the Haar measure is invariant under translations, we have that $\mu$ is the uniform measure on $\Unitary(1)$.
Let $S$ be the set of primes of $K$ dividing $\gM$. Fix an embedding of $K$ in $\C$ and for $z\in \C$, let $|z|:=\sqrt{z\overline z}$. Then one has that $|\psi(\p)|=N(\p)^{l/2}$. For $\p\not \in S$, define 
$$
x_\p:=\frac{\psi(\p)}{N(\p)^{l/2}}\in X=\Unitary(1)\,.
$$

\begin{corollary}\label{corollary: equid cm}
The sequence $\{x_{\p_i}\}_{i\geq 1}$ is $\mu$-equidistributed on $\Unitary(1)$.
\end{corollary}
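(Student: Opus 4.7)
The plan is to apply Theorem~\ref{theorem: equidistribution} with $G=\Unitary(1)$. Since $G$ is compact and abelian, every irreducible representation is one of the characters $\phi_a(u)=u^a$, $a\in\Z$ (see \eqref{equation: phia}), and the nontrivial ones are exactly those with $a\in\Z^*$. Thus the task reduces to verifying, for each $a\in\Z^*$, that the Euler product
$$
L(\phi_a, s) \;=\; \prod_{i \geq 1}\bigl(1 - x_{\p_i}^a\, N(\p_i)^{-s}\bigr)^{-1} \;=\; \prod_{\p \notin S}\bigl(1 - \psi(\p)^a\, N(\p)^{-s - la/2}\bigr)^{-1}
$$
extends to a holomorphic and nonvanishing function on $\Re(s)\geq 1$ (which also takes care of the meromorphy hypothesis in Theorem~\ref{theorem: equidistribution}).

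The route I would take is to identify $L(\phi_a, s)$ with the Hecke $L$-function of a nontrivial unitarized Hecke character, so that Theorem~\ref{theorem: Hecke} applies. Introduce the unitarization $\tilde\psi(\p):=\psi(\p)/N(\p)^{l/2}$, so that $x_\p=\tilde\psi(\p)$. For $a\in\Z^*$, the power $\tilde\psi^a$ is again a unitarized Hecke character of $K$, where for negative $a$ one invokes $\tilde\psi^{a}=\overline{\tilde\psi}^{\,-a}$, using that $\tilde\psi$ takes values in $\Unitary(1)$. Because $\tilde\psi(\p)=0$ for $\p\in S$, the Euler factors at $\p\in S$ in the full Hecke $L$-function are trivial, and one gets the equality of Euler products $L(\phi_a,s)=L(\tilde\psi^a,s)$. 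Granted that $\tilde\psi^a$ is nontrivial, Theorem~\ref{theorem: Hecke} then yields holomorphy and nonvanishing on $\Re(s)\geq 1$, and Theorem~\ref{theorem: equidistribution} delivers the $\mu$-equidistribution of $\{x_{\p_i}\}_{i\geq 1}$.

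The only step that requires any real thought is the nontriviality of $\tilde\psi^a$ for $a\in\Z^*$. I would verify it by testing on principal ideals: for $\alpha\in K^*$ with $\alpha\equiv^\times 1\pmod{\gM}$ one has $\tilde\psi^a(\alpha\mathcal O_K)=(\alpha/|\alpha|)^{la}$. Since $K$ is imaginary quadratic, the set of such $\alpha/|\alpha|$ is infinite in $\Unitary(1)$, so raising to the nonzero integer power $la$ cannot collapse it to $\{1\}$. This rules out triviality of $\tilde\psi^a$ and completes the argument.
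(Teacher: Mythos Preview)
Your proof is correct and follows essentially the same route as the paper: apply Theorem~\ref{theorem: equidistribution} with $G=\Unitary(1)$, identify $L(\phi_a,s)$ with the Hecke $L$-function of the $a$th power of the (unitarized) character, and invoke Theorem~\ref{theorem: Hecke}. The paper's proof is terser, merely asserting that ``a nontrivial power of $\psi$ is again a nontrivial Hecke character''; your added justification of nontriviality via evaluation on principal ideals (using that $K$ is imaginary quadratic so that the arguments $\alpha/|\alpha|$ cannot all be roots of unity of bounded order) fills in a detail the paper leaves implicit.
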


\begin{proof} The nontrivial irreducible characters of $\Unitary(1)$ are $\phi_a\colon \Unitary(1)\rightarrow \C^*$ for $a\in \Z^*$ (see the proof of Proposition \ref{proposition: varyingfield}).
By Theorem \ref{theorem: equidistribution}, it is enough to prove that $L(\phi_a,s)$ is holomorphic and nonvanishing for $\Re(s)\geq 1$. But this is a consequence of Theorem \ref{theorem: Hecke}, and the fact that a nontrivial power of $\psi$ is again a nontrivial Hecke character.
\end{proof}

\subsubsection*{First application: Sato-Tate for CM elliptic curves defined over $K$} 

Let $E$ be an elliptic curve with complex multiplication by a quadratic imaginary field $K$ and assume that $E$ is defined over $K$. Let $\gM$ be the conductor of $E$. This is an ideal of the ring of integers of $K$ divisible precisely by the primes at which $E$ has bad reduction\footnote{See for example \cite[Chap. IV, \S10]{Sil94} for a description of the exponent of each prime in the factorization of $\gM$.}. A classical result of Deuring ensures the existence of an algebraic Hecke character $\psi_E$ of~$K$ of conductor~$\gM$ and of type at infinity $1$ that is attached to $E$ by means of the following property.  

For every $\p\nmid \gM$, let $a_\p$ denote the quantity $N(\p)+1-N_\p(E)$, where $N_\p(E)$ denotes the number of points defined over the residue field at $\p$ of the reduction of~$E$ at~$\p$. Then 
\begin{equation}\label{equation: heckedeuchar}
a_\p=\psi_E(\p)+\overline {\psi_E(\p)}\,.
\end{equation}
\begin{corollary}\label{corollary: STCM}
Let $E$ be an elliptic curve with CM by a quadratic imaginary field~$K$ and assume that~$E$ is defined over~$K$. Then, the sequence of normalized traces $\{a_{\p_i}/\sqrt{N(\p_i)}\}_{i\geq 1}$ is equidistributed on $[-2,2]$ with respect to the measure
$$
\mu_{\rm{cm}}:=\frac{1}{\pi}\frac{dz}{\sqrt{4-z^2}}\,.
$$
\end{corollary}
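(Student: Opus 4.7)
The plan is to reduce the statement to Corollary \ref{corollary: equid cm} via the Deuring Hecke character $\psi_E$ and a pushforward of measures.

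First I would invoke the Deuring character: by the result quoted before the corollary, there exists an algebraic Hecke character $\psi_E$ of $K$, of conductor $\gM$ and infinity type $1$, such that $a_\p = \psi_E(\p) + \overline{\psi_E(\p)}$ for every $\p \nmid \gM$. Since the infinity type is $1$, we have $|\psi_E(\p)| = N(\p)^{1/2}$, so setting
$$
x_\p := \frac{\psi_E(\p)}{N(\p)^{1/2}} \in \Unitary(1),
$$
one obtains
$$
\frac{a_\p}{\sqrt{N(\p)}} = x_\p + \overline{x_\p}.
$$
Let $S$ be the set of primes of $K$ dividing $\gM$. Corollary \ref{corollary: equid cm}, applied to $\psi_E$, then yields that the sequence $\{x_{\p_i}\}_{i \geq 1}$ (indexed by $P \setminus S$ ordered by norm) is equidistributed on $\Unitary(1)$ with respect to the Haar measure.

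The second step is to transfer this equidistribution to $[-2,2]$ via the continuous map $\sigma\colon \Unitary(1) \to [-2,2]$, $u \mapsto u + \overline{u}$. Since continuous functions $f$ on $[-2,2]$ pull back to continuous functions $f \circ \sigma$ on $\Unitary(1)$ (which are precisely the continuous class functions invariant under $u \mapsto \overline{u}$), equidistribution of $\{x_{\p_i}\}$ on $\Unitary(1)$ implies equidistribution of $\{\sigma(x_{\p_i})\} = \{a_{\p_i}/\sqrt{N(\p_i)}\}$ on $[-2,2]$ with respect to the pushforward measure $\sigma_* \mu$.

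Finally, I would identify $\sigma_*\mu$ with $\mu_{\rm cm}$. Parametrising $\Unitary(1)$ by $u = e^{i\theta}$ with $\theta \in [0, 2\pi)$, the Haar measure is $d\theta/(2\pi)$ and $\sigma$ becomes $\theta \mapsto 2\cos\theta$, which is generically $2$-to-$1$ onto $[-2,2]$. The substitution $z = 2\cos\theta$ gives $dz = -2\sin\theta\, d\theta = -\sqrt{4 - z^2}\, d\theta$, so that accounting for both preimages of $z$ one obtains
$$
\sigma_*\mu = \frac{1}{\pi}\frac{dz}{\sqrt{4 - z^2}} = \mu_{\rm cm},
$$
exactly as already used in the proof of Proposition \ref{proposition: varyingfield}. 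This completes the argument. The substantive content lies entirely in the Deuring correspondence and in Corollary \ref{corollary: equid cm} (ultimately in Hecke's Theorem \ref{theorem: Hecke}); no genuine obstacle arises in the elementary measure-theoretic pushforward, beyond verifying that the nontriviality hypothesis in Corollary \ref{corollary: equid cm} is met, which is immediate since the infinity type of $\psi_E$ is $1 \neq 0$ and hence all nontrivial powers $\psi_E^a$ remain nontrivial Hecke characters.
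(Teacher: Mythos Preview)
Your proposal is correct and follows essentially the same approach as the paper: apply Corollary~\ref{corollary: equid cm} to the Deuring Hecke character $\psi_E$ to obtain equidistribution of $x_\p=\psi_E(\p)/\sqrt{N(\p)}$ on $\Unitary(1)$, and then push forward under $u\mapsto u+\overline u$ to recover $\mu_{\rm cm}$ on $[-2,2]$, the latter computation having already appeared in Proposition~\ref{proposition: varyingfield}. Your write-up is simply more explicit about the pushforward step and the nontriviality check.
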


\begin{proof}
Apply Corollary \ref{corollary: equid cm} to deduce that $\{ x_{\p_i}\}_{i\geq 1}$ is equidistributed on $\Unitary(1)$, where $x_\p:=\psi_E(\p)/\sqrt {N(\p)}$.
We have already seen in \S\ref{section: varyingfield} that the projection of the Haar measure $\mu$ of $\Unitary(1)$ on the interval $[-2,2]$ by the trace map is~$\mu_{\rm{cm}}$.
\end{proof}

\subsubsection*{Second application: equidistribution of eigenvalues of CM newforms} 

Let us denote by $S_k(\Gamma_1(M))$ the complex space of weight $k$ cusp forms for $\Gamma_1(M)$. We assume throughout the section that $k\geq 2$. There is a decomposition 
$$
S_k(\Gamma_1(M))=\bigoplus_\varepsilon S_k(\Gamma_0(M),\varepsilon),
$$
where $\varepsilon\colon (\Z/M\Z)^*\rightarrow \C^*$ runs over the characters of $(\Z/M\Z)^*$, and $S_k(\Gamma_0(M),\varepsilon)$ denotes the space of weight $k$ cusp forms for $\Gamma_0(M)$ with Nebentypus $\varepsilon$. We say that 
$$
f=\sum_{n\geq 1} a_n(f)q^n\in S_k(\Gamma_1(M),\varepsilon)\,,\quad\text{ with }q=e^{2\pi i z}\text{ and }\Im(z)> 0\,,
$$
is new if it can not be written as a linear combination of forms of lower levels. We say that $f$ is normalized if $a_1(f)=1$. We will use the term \emph{newform} for a normalized new eigenform for the action of the Hecke algebra. If $f$ is a newform, then $a_p(f)=\overline {a_p(f)}\varepsilon(p)$.

We say that $f$ has complex multiplication (CM) if there exists a Dirichlet character $\chi$ such that $a_p(f)=\chi(p) a_p(f)$ for a set of primes $p$ of density~$1$. In this case, one can show that $\chi$ is quadratic and attached to a quadratic imaginary field~$K$ (see \cite[\S 3]{Rib77}). It is then also common to say that $f$ has CM by the quadratic imaginary field $K$.

We recall a result that states that every CM cusp form comes from an algebraic Hecke character. This can be seen as a generalization of the result of Deuring seen in the previous paragraph. Let $\Delta_K$ denote the absolute value of the discriminant of $K$, and $\chi$ the quadratic character attached to $K$. 
 
\begin{theorem}\cite[Cor. 3.5, Thm. 4.5]{Rib77}\label{theorem: HecShiRib} Let $\psi$ be an algebraic Hecke character of~$K$ of modulo $\gM$ and type at infinity $l$. Then $f_{\psi}:=\sum_{\mathfrak a}\psi(\mathfrak a)q^ {N(\mathfrak a)}$, where the sum runs over integral ideals of $K$, is an eigenform for the action of the Hecke algebra of weight $l+1$, level $\Delta_KN(\gM)$, with CM by $K$, and Nebentypus $\chi\eta$, where
$$
\eta\colon(\Z/N(\gM)\Z)^*\rightarrow \C^*\,,\qquad \eta(n):=\frac{\psi(n\mathcal O_K)}{n^ l}\,. 
$$
Moreover, $f_\psi$ is new at this level if and only if $\gM$ is the conductor of $\psi$.
Conversely, every new eigenform $f\in S_k(\Gamma_1(M))$ with CM by a quadratic imaginary field arises from an algebraic Hecke character $\psi_f$ in this way.
\end{theorem}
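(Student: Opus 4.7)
The statement has two directions: construct $f_\psi$ from $\psi$ and read off its properties, then recover $\psi_f$ from a given CM newform $f$. For the first direction, I would start from the Dirichlet series $L(\psi,s) = \sum_{\mathfrak a}\psi(\mathfrak a) N(\mathfrak a)^{-s}$, so that the candidate $f_\psi$ is its inverse Mellin transform on the critical strip. By Hecke's theorem on algebraic Hecke characters, $L(\psi,s)$ and all its twists by Dirichlet characters of conductor coprime to $\gM$ admit standard functional equations $s \leftrightarrow l+1-s$, with gamma factor $(2\pi)^{-s}\Gamma(s)$ and conductor $\Delta_K N(\gM)$. Weil's converse theorem then produces $f_\psi$ as a cusp form of weight $l+1$ on $\Gamma_0(\Delta_K N(\gM))$, and its transformation law under that group yields, after unwinding the definition of $\eta$ on the image of $\Z$ in $(\mathcal O_K/\gM)^*$, precisely the Nebentypus $\chi\eta$. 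Cuspidality is forced by $l\geq 1$.

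The Hecke eigenvalue relations then follow from multiplicativity of $\psi$ on coprime ideals, and the CM property is immediate: no integral ideal of $K$ has norm equal to a rational prime $p$ inert in $K$, so $a_p(f_\psi)=0$ for all such $p$, which is exactly the identity $a_p(f_\psi)=\chi(p)a_p(f_\psi)$ on the density-$\tfrac12$ set of inert primes. The newness claim reduces to a conductor-exponent computation comparing $\gM$ to the conductor of $\psi$. For the converse, given a CM newform $f\in S_k(\Gamma_1(M))$, the hard step is producing $\psi_f$, for which I would invoke Deligne's $\ell$-adic Galois representation $\rho_{f,\ell}$ attached to $f$: the CM identity $a_p(f)=\chi(p)a_p(f)$ on a density-one set of primes forces $\rho_{f,\ell}\otimes\chi\cong\rho_{f,\ell}$, which, since $\rho_{f,\ell}$ is irreducible, forces $\rho_{f,\ell}\cong\operatorname{Ind}_K^{\Q}\lambda$ for a continuous character $\lambda$ of $\Gal(\Qbar/K)$. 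Class field theory converts $\lambda$ into an algebraic Hecke character $\psi_f$ of $K$ of type at infinity $l=k-1$; matching traces of Frobenius at split primes gives $a_n(f)=a_n(f_{\psi_f})$ for all $n$ coprime to $M$, and strong multiplicity one for newforms concludes $f=f_{\psi_f}$.

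The main obstacle is the induction step in the converse, that is, promoting the twist self-isomorphism $\rho_{f,\ell}\otimes\chi\cong\rho_{f,\ell}$ into an honest decomposition $\rho_{f,\ell}\cong\operatorname{Ind}_K^{\Q}\lambda$ and matching conductors precisely so that $\psi_f$ has conductor exactly the prime-to-$\Delta_K$ part of $M$. Ribet's original proof in \cite{Rib77} avoids the $\ell$-adic machinery altogether and builds $\psi_f$ directly from the Fourier coefficients of $f$ at split primes, at the cost of then having to show that the resulting assignment extends consistently and multiplicatively to all ideals coprime to the conductor, which is essentially the same obstacle phrased on the automorphic rather than Galois side.
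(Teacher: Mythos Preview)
The paper does not supply a proof of this theorem at all: it is quoted as a result of Hecke, Shimura, and Ribet, with the citation \cite[Cor.~3.5, Thm.~4.5]{Rib77} standing in lieu of an argument. So there is nothing in the paper to compare your proposal against directly.

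Your sketch is nonetheless sound in outline. For the forward direction, the route through Hecke's functional equation and Weil's converse theorem is valid, though historically Hecke proved modularity of $f_\psi$ more directly by exhibiting it as a linear combination of theta series with Hecke characters; either works. Your argument for the CM property (vanishing of $a_p$ at inert primes) is correct and is the standard one.

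For the converse, your $\ell$-adic argument is also correct, but your final paragraph mischaracterizes Ribet's method. Ribet's proof of \cite[Thm.~4.5]{Rib77} does \emph{not} avoid the $\ell$-adic machinery: it proceeds exactly as you propose, by observing that the self-twist $\rho_{f,\lambda}\otimes\chi\cong\rho_{f,\lambda}$ forces $\rho_{f,\lambda}|_{G_K}$ to be reducible (Clifford theory), writing it as a sum of two characters, and then identifying one of them with an algebraic Hecke character. Indeed, the very title of \cite{Rib77} is ``Galois representations attached to eigenforms with Nebentypus,'' and the paper you are reading explicitly invokes this when it says that the proof of the theorem yields the factorization of the characteristic polynomial of $\varrho_{f,\lambda}(\Frob_\p)$ as $(T-\psi_f(\p))(T-\psi_f(\overline\p))$. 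So your approach is not an alternative to Ribet's; it \emph{is} Ribet's, and the ``obstacle'' you identify (promoting the self-twist to an induction and pinning down the infinity type and conductor of $\lambda$) is precisely the content of his Theorem~4.5.
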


Let $f$ be a newform and let $K_f$ denote the number field obtained by adjoining to $\Q$ the Fourier coefficients $a_n(f)$. Denote by $G_\Q$ the absolute Galois group of~$\Q$. As shown by Deligne \cite{Del71}, for each prime ideal $\lambda$ of $K_f$ lying over a prime $\ell$, there is an irreducible $2$-dimensional Galois representation
\begin{equation}\label{equation: lambda}
\varrho_{f,\lambda}: G_\Q\rightarrow \GL_2(K_{f,\lambda})\,,
\end{equation}
where $K_{f,\lambda}$ is the completion of $K_f$ at $\lambda$. For any $p\nmid M\ell$, one has the following fact 
\begin{equation}\label{equation: char poly}
\text{Characteristic polynomial of } \varrho_{f,\lambda}(\Frob_p) = T^2-a_p(f)T+\varepsilon(p)p^{k-1}
\,.
\end{equation}
Here, $\Frob_p$ denotes an absolute Frobenius element at $p$. It follows that 
\begin{equation}\label{equation: detcyc}
\det(\varrho_{f,\lambda})=\varepsilon\chi_\ell^{k-1}\,,
\end{equation} 
where $\chi_\ell$ is the $\ell$-adic cyclotomic character.
Let $\p$ be a prime of $K$ not dividing $M\ell$. In the case that $f$ has CM by $K$, the proof of Theorem \ref{theorem: HecShiRib} (see \cite[Thm. ~4.5]{Rib77}) establishes that the characteristic polynomial of $\varrho_{f,\lambda}(\Frob_\p)$ is
$$
(T-\psi_f(\p))(T-\psi_f( \overline \p))\,.
$$
Then, (\ref{equation: detcyc}) implies that $\psi_f( \overline \p)= \varepsilon(N(\p)) \overline{\psi_f(\p)}$.
Consider the commutative group
$$
G:=\left\{ 
\begin{pmatrix}
u & 0\\
0 & \zeta\overline u
\end{pmatrix} \, |\,\zeta\in\im(\varepsilon),u\in\C^*,|u|=1\right\}\,.
$$ 
Observe that if $\varepsilon$ is trivial, $G$ is isomorphic to the unitary group $\Unitary(1)$ of degree~$1$. Let $\mu$ denote the Haar measure of $G$ and observe that $G$ coincides with the set of its conjugacy classes. Let $S$ be the set of primes of $K$ lying over~$M\ell$. For every $\p\not\in S$, let 
$$
x_{\p}:=\begin{pmatrix}
\psi_f(\p)/N(\p)^{(k-1)/2} & 0\\
0 & \varepsilon(N(\p))\overline {\psi_f(\p)}/N(\p)^{(k-1)/2}
\end{pmatrix}\in X\,.
$$

\begin{proposition}\label{proposition: equid cm mf} The sequence $\{x_{\p_i}\}_{i\geq 1}$ is $\mu$-equidistributed on $X$.
\end{proposition}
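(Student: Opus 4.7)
The plan is to invoke Theorem~\ref{theorem: equidistribution}, which reduces equidistribution of $\{x_{\p_i}\}_{i\geq 1}$ to the assertion that for every nontrivial irreducible continuous character $\chi$ of $G$, the $L$-function $L(\chi,s)$ is holomorphic and nonvanishing on $\Re(s)\geq 1$. Since $G$ is abelian, all irreducible representations are one-dimensional. Under the identification $G\simeq \Unitary(1)\times \im(\varepsilon)$ via $(u,\zeta)\longleftrightarrow \begin{pmatrix} u & 0 \\ 0 & \zeta\overline u\end{pmatrix}$, these characters are
\[
\chi_{a,b}(u,\zeta)=u^a\zeta^b,\qquad (a,b)\in \Z\times \Z/N\Z,
\]
where $N:=|\im(\varepsilon)|$ denotes the exact order of the finite cyclic group $\im(\varepsilon)$.

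Evaluating at the sequence,
\[
\chi_{a,b}(x_\p)=\frac{\psi_f(\p)^a\,\varepsilon(N(\p))^b}{N(\p)^{a(k-1)/2}},
\]
so, up to finitely many holomorphic and nonvanishing Euler factors at the primes of $S$, the Euler product $L(\chi_{a,b},s)$ agrees with the $L$-function of the unitarized Hecke character
\[
\widetilde\Psi_{a,b}:=\psi_f^a\cdot(\varepsilon^b\circ N_{K/\Q})\cdot N_{K/\Q}^{-a(k-1)/2}
\]
of $K$. Provided $\widetilde\Psi_{a,b}$ is nontrivial for every $(a,b)\neq(0,0)$, Theorem~\ref{theorem: Hecke} then delivers the required holomorphicity and nonvanishing on $\Re(s)\geq 1$, and Theorem~\ref{theorem: equidistribution} completes the argument.

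For $a\neq 0$, nontriviality is immediate: $\psi_f^a$ has type at infinity $a(k-1)\neq 0$ (since $k\geq 2$), so $\widetilde\Psi_{a,b}$ is not even of finite order. The main obstacle is the remaining case $a=0$ with $b\not\equiv 0\pmod N$: one must show that $\varepsilon^b\circ N_{K/\Q}$ is a nontrivial finite-order Hecke character of $K$. Combining the relation $\psi_f(\overline\p)=\varepsilon(N(\p))\overline{\psi_f(\p)}$ derived above with $|\psi_f(\p)|^2=N(\p)^{k-1}$ yields the identity $\varepsilon(N(\p))=\psi_f(\p)\psi_f(\overline\p)/N(\p)^{k-1}$, which rewrites $\widetilde\Psi_{0,b}$ in terms of $\psi_f$, its Galois conjugate $\psi_f^\sigma$, and a power of the norm character. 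Applying Chebotarev to the split primes of $K/\Q$, triviality of $\varepsilon^b\circ N_{K/\Q}$ would force $\varepsilon^b$ to coincide either with the trivial Dirichlet character or with the quadratic character $\chi_K$ associated to $K$; this is precluded by the structure of the Nebentypus of a CM newform (by Theorem~\ref{theorem: HecShiRib}, $\varepsilon=\chi_K\eta$) together with the minimal choice of $N$ as the exact order of $\im(\varepsilon)$.
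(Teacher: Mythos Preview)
Your strategy coincides with the paper's: write $G\simeq \Unitary(1)\times\im(\varepsilon)$, list its characters, recognise the resulting Euler products as $L$-functions of unitarized Hecke characters of $K$, and invoke Theorem~\ref{theorem: Hecke} via Theorem~\ref{theorem: equidistribution}. The paper is terser and simply asserts that ``$\psi_f^a\otimes\chi$ is again a nontrivial Hecke character'' without separating cases.

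Your treatment of the case $a=0$, $b\not\equiv 0\pmod N$ is more explicit than the paper's, but the final step does not go through. You correctly reduce to showing that $\varepsilon^b\circ N_{K/\Q}$ is nontrivial on $K$, and correctly note that its triviality would force $\varepsilon^b\in\{1,\chi_K\}$. The assertion that $\varepsilon^b=\chi_K$ is ``precluded by the structure of the Nebentypus\ldots together with the minimal choice of $N$'' is, however, unjustified and in general false. Take a CM newform with $\eta$ trivial in the sense of Theorem~\ref{theorem: HecShiRib}; for instance the weight~$3$ form $f_{\psi_E^2}$ attached to the square of the Hecke character of a CM elliptic curve $E/\Q$ (for which $\eta_E=\chi_K$, hence $\eta=\eta_E^2=1$). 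Then $\varepsilon=\chi_K$, $N=2$, and $b=1$ gives $\varepsilon^b=\chi_K$. In that situation $\varepsilon(N(\p))=1$ for \emph{every} unramified prime $\p$ of $K$ (split $\p$ over $p$ give $\chi_K(p)=1$; inert $\p$ give $\chi_K(p)^2=1$), so every $x_\p$ lies in the index-$2$ subgroup $\Unitary(1)\times\{1\}$ of $G$, and equidistribution on all of $G$ is impossible. The gap therefore cannot be repaired: the proposition as literally stated fails in this edge case, and the paper's one-line proof glosses over the same point. What is actually true is equidistribution on $\Unitary(1)\times\im(\varepsilon\circ N_{K/\Q})$, where the second factor has index $1$ or $2$ in $\im(\varepsilon)$ (index $2$ precisely when $\chi_K\in\langle\varepsilon\rangle$); with this corrected group both your argument and the paper's go through verbatim.
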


\begin{proof}
First note that 
$$
G\simeq\Unitary(1)\times\im(\varepsilon)\qquad \text{and}\qquad \im(\varepsilon)\simeq \Gal(K_\varepsilon/K)\,,
$$ 
where $K_\varepsilon/K$ is a cyclic extension of order $\#\im(\varepsilon)$. Thus the nontrivial irreducible representations of $G$ are $\phi_a\otimes \chi$, where $\phi_a(u)=u^ a$ for some $a\in \Z$ and $\chi$ is a character of $\Gal(K_\varepsilon/K)$ such that either $a\not =0$ or $\chi$ is nontrivial.
By Theorem~\ref{theorem: equidistribution}, it is enough to prove that $L(\phi_a\otimes \chi,s)$ is holomorphic and nonvanishing for $\Re(s)\geq 1$. But this is a consequence of Theorem \ref{theorem: Hecke}, and the fact that $\psi_f^a\otimes \chi$ is again a nontrivial Hecke character.
\end{proof}

\subsubsection{Equidistribution of eigenvalues of non-CM newforms}\label{section: classic ST} 

Let $f\in S_k(\Gamma_1(M))$ be a newform without complex multiplication. As in the previous section assume $k\geq 2$. Recall that we may attach to it a $\lambda$-adic irreducible Galois representation as in (\ref{equation: lambda}).
Consider the group
\begin{equation}\label{equation: STgmf}
G:=\left\{ 
\begin{pmatrix}
a & b\\
-\overline b & \overline a
\end{pmatrix} \, |\,a,b\in\C,\, a\overline a+b\overline b\in\im(\varepsilon)
\right\}\,,
\end{equation} 
and let $\mu$ be its Haar measure. Observe that if $\varepsilon$ is trivial, $G$ coincides with the symplectic unitary group\footnote{Note that $\USp(2)=\SU(2)$, since a unitary matrix of determinant~1 of degree~2 is automatically symplectic.} $\USp(2)$ of degree~$2$. Let $X$ denote the set of conjugacy classes of $G$. In this section $F=\Q$, and $S$ denotes the set of primes dividing~$M\ell$. For every $p\not\in S$, let $x_{p}$ denote the conjugacy class of $X$ defined by
$$
\frac{\varrho_{f,\lambda}(\Frob_{p})}{p^{(k-1)/2}}\,.
$$
The fact, that $x_p$ defines a conjugacy class in $X$, follows from ~(\ref{equation: char poly}) and the Ramanujan-Petersson inequality asserting that $|a_p(f)|\leq 2p^{(k-1)/2}$ (see \cite[Th\'eor\`eme 8.2]{Del74}).

\begin{theorem}[Barnet-Lamb, Geraghty, Harris, Taylor]\label{theorem: ST}
Let $f\in S_k(\Gamma_0(M),\varepsilon)$ be a newform of weight $k\geq 2$, level $M$, and Nebentypus $\varepsilon: (\Z/M\Z)^ *\rightarrow \C^ *$. For $D\geq 1$, let $\chi\colon (\Z/D\Z)^*\rightarrow \C^*$ be a Dirichlet character. Write
$$
f(z)=\sum_{n\geq 1} a_n(f) q^n,\qquad \text{with }a_1(f)=1\text{ and }q=e^ {2 \pi i z}.
$$
For $p\nmid  M$ write $\{\alpha_pp^{(k-1)/2},\beta_pp^{(k-1)/2}\}$ for the roots of 
$$
T^2-a_p(f)T+\varepsilon(p)p^{k-1}
$$
Then, if $f$ does not have complex multiplication, for $m\geq 1$, the Euler product 
$$
\prod_{p\nmid MD}\prod_{i=0}^m(1-\chi(p)\alpha_p^{m-i}\beta_p^{i}p^{-s})^{-1}
$$
has meromorphic continuation to the whole complex plane, and is holomorphic and nonvanishing for $\Re(s)\geq 1$. 
\end{theorem}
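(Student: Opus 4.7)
The plan is to recognize the Euler product in the statement as the unitarized symmetric power $L$-function of $f$ twisted by $\chi$, and then to deduce the claimed analytic properties from the potential automorphy theorem of Barnet-Lamb, Geraghty, Harris and Taylor. By (\ref{equation: char poly}) the eigenvalues of $\varrho_{f,\lambda}(\Frob_p)/p^{(k-1)/2}$ are exactly $\alpha_p$ and $\beta_p$, so the eigenvalues of $\Sym^m(\varrho_{f,\lambda}(\Frob_p))/p^{m(k-1)/2}$ are the $\alpha_p^{m-i}\beta_p^{i}$ for $i=0,\dots,m$. Consequently the product appearing in the statement is, up to the unitary normalization, the Galois $L$-function $L(\Sym^m\varrho_{f,\lambda}\otimes\chi, s)$, and one is reduced to proving its meromorphic continuation to $\C$ together with holomorphy and nonvanishing on the closed halfplane $\Re(s)\geq 1$.

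The central analytic input is the potential automorphy result of \cite{BLGHT11}: because $f$ has no complex multiplication, for every $m\ge 1$ there exists a totally real Galois extension $F/\Q$ such that $\Sym^m\varrho_{f,\lambda}|_{G_F}$ is automorphic, i.e.\ corresponds to a regular algebraic cuspidal automorphic representation $\Pi$ of $\GL_{m+1}(\mathbb{A}_F)$. Given this, standard results of Jacquet-Shalika (holomorphy and nonvanishing on the edge of the critical strip) and Godement-Jacquet (meromorphic continuation to $\C$) yield the desired properties for $L(\Pi\otimes\chi', s)$ for every Hecke character $\chi'$ of $F$.

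To descend from $F$ back to $\Q$ one applies Brauer's induction theorem, which writes the trivial character of $\Gal(F/\Q)$ as an integer combination $1=\sum_i n_i\,\mathrm{Ind}_{H_i}^{\Gal(F/\Q)}\psi_i$ with each $H_i$ solvable (in fact, one may take each $H_i$ cyclic). Tensoring this decomposition with $\Sym^m\varrho_{f,\lambda}\otimes\chi$ and invoking the inductivity of Artin-type $L$-functions factors $L(\Sym^m\varrho_{f,\lambda}\otimes\chi, s)$ as an integer product of $L$-functions attached to the restrictions of $\Sym^m\varrho_{f,\lambda}\otimes\chi$ to the subfields $F^{H_i}$, twisted by the $\psi_i$. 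Since each extension $F/F^{H_i}$ is solvable, the cyclic base change theorems of Arthur-Clozel allow $\Pi$ to descend to an automorphic representation over $F^{H_i}$; every factor of the resulting product is thus an automorphic $L$-function, and the conclusion follows once more from Jacquet-Shalika and Godement-Jacquet.

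The main obstacle is of course the potential automorphy statement itself, whose proof requires the modularity lifting technology of Taylor-Wiles-Kisin type in arbitrary dimension, applied to compatible systems with sufficiently large residual image produced via the Dwork family of Calabi-Yau hypersurfaces. Once potential automorphy is granted, the passage from $F$ to $\Q$ via Brauer induction and solvable base change is comparatively formal, and provides both the meromorphic continuation to $\C$ and the holomorphy and nonvanishing for $\Re(s)\geq 1$ asserted in the theorem.
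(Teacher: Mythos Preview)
Your proposal is correct as a sketch of the argument, but you should be aware that the paper itself does not prove this statement: its entire proof reads ``See \cite[Theorem B]{BLGHT11}.'' What you have written is an outline of the actual strategy carried out in \cite{BLGHT11}, so in that sense you have supplied strictly more than the paper does, and your outline is faithful to how the result is established there (potential automorphy over a totally real Galois extension, then descent to $\Q$ via Brauer induction combined with Arthur--Clozel solvable base change, with the analytic properties on $\Re(s)\geq 1$ coming from Jacquet--Shalika).

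One small inaccuracy: in your Brauer step you write that one may take each $H_i$ cyclic. Brauer's theorem produces \emph{elementary} subgroups (direct products of a $p$-group and a cyclic group of order prime to $p$); these are nilpotent and hence solvable, which is all that is needed for Arthur--Clozel, but they are not in general cyclic. This does not affect the validity of the argument.
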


\begin{proof}
See \cite[Theorem B]{BLGHT11}.
\end{proof}

\begin{corollary}\label{corollary: equid no cm} The sequence $\{x_{p_i}\}_{i\geq 1}$ is $\mu$-equidistributed on $X$.
\end{corollary}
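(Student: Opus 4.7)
The plan is to apply Theorem~\ref{theorem: equidistribution}. Concretely, I must exhibit, for every nontrivial continuous irreducible representation $\varrho$ of $G$, a holomorphic and nonvanishing continuation of the Euler product $L(\varrho,s)$ to the closed half-plane $\Re(s)\geq 1$.

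The first step is to classify the continuous irreducible representations of $G$. The determinant realises $G$ as an extension
$$
1\longrightarrow \SU(2)\longrightarrow G\longrightarrow \im(\varepsilon)\longrightarrow 1\,,
$$
where $\SU(2)=\USp(2)$. The irreducible representations of $\SU(2)$ are the symmetric powers $\Sym^m$ of its standard representation ($m\geq 0$), and each such $\Sym^m$ extends to $G$ via the inclusion $G\subseteq \Unitary(2)$. Since the quotient $\im(\varepsilon)$ is abelian, a standard Clifford-theoretic argument shows that every continuous irreducible representation of $G$ is isomorphic to $\Sym^m\otimes(\chi\circ\det)$ for a unique pair $(m,\chi)$, with $m\geq 0$ and $\chi$ a character of $\im(\varepsilon)$. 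Such a representation is nontrivial exactly when $m\geq 1$ or $\chi$ is nontrivial.

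The second step is to compute the associated $L$-function. Writing $\alpha_p,\beta_p$ for the eigenvalues of $x_p$, and noting that $\det x_p=\varepsilon(p)$, the local Euler factor at $p\notin S$ of $L(\Sym^m\otimes(\chi\circ\det),s)$ equals
$$
\prod_{i=0}^m\bigl(1-\chi(\varepsilon(p))\,\alpha_p^{m-i}\beta_p^{i}\,p^{-s}\bigr)^{-1}\,.
$$
Setting $\widetilde\chi:=\chi\circ\varepsilon$, a Dirichlet character whose conductor divides $M$, this $L$-function coincides, up to the finitely many nonzero holomorphic Euler factors at primes in $S$, with the Euler product of Theorem~\ref{theorem: ST} attached to $f$, to $\widetilde\chi$, and to the exponent $m$. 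When $m\geq 1$, Theorem~\ref{theorem: ST} yields exactly the needed holomorphicity and nonvanishing on $\Re(s)\geq 1$. When $m=0$ and $\chi\neq 1$, the function reduces (up to the same finite correction) to the Dirichlet $L$-function of the nontrivial character $\widetilde\chi$, whose holomorphicity and nonvanishing on $\Re(s)\geq 1$ is a classical instance of Theorem~\ref{theorem: Hecke}.

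The substantive obstacle is entirely hidden in Theorem~\ref{theorem: ST}, resting on the potential automorphy results of Barnet-Lamb, Geraghty, Harris, and Taylor; once that theorem is in hand, the present corollary is a formal consequence of the representation-theoretic description of $G$ and of the routine bookkeeping of Euler factors at the primes of $S$.
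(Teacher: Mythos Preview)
Your proof is correct and follows essentially the same approach as the paper: classify the irreducible representations of $G$ as $\Sym^m\otimes\chi$, then invoke Theorem~\ref{theorem: ST} for $m\geq 1$ and the Dirichlet case for $m=0$. You are simply more explicit than the paper in two places---you spell out that $\chi$ acts through $\det$ and that the resulting Dirichlet character is $\chi\circ\varepsilon$---but the argument is the same.
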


\begin{proof}
It is easily checked that the irreducible representations of~$G$ are of the form $\Sym^m(\C^ 2)\otimes \chi$, where $\chi$ is a character of the cyclic group $\im(\varepsilon)$. By Theorem~\ref{theorem: equidistribution}, it suffices to show that for $m\geq 1$, the $L$-function $L(\Sym^m(\C^2)\otimes \chi,s)$
is holomorphic and nonvanishing for $\Re(s)\geq 1$ (the case $m=0$ is covered by Dirichlet's Theorem). But this follows from  Theorem~\ref{theorem: ST}, and the following two observations
\begin{itemize} 
\item Note that by (\ref{equation: char poly}), if $\{\alpha_pp^{(k-1)/2},\beta_pp^{(k-1)/2}\}$ are the roots of 
$
T^2-a_p(f)T+\varepsilon(p)p^{k-1}
$,
then $\{\alpha_p, \beta_p\}$ are the eigenvalues of $x_p$. 
\item Recall that if $e_1,\dots,e_n$ is a basis for the vector space $V$, then $\{e_{i_1}\cdot{}_{\dots}\cdot e_{i_m}|1\leq  i_1\leq i_2\leq {}_{\dots} \leq i_m\leq n\}$
is a basis of $\Sym^m (V)$.
\end{itemize}
\end{proof}

\subsubsection*{Application: The classical Sato-Tate Conjecture for non CM elliptic curves defined over $\Q$}\label{section: non CM} 

Let $E$ be an elliptic curve defined over $\Q$ without complex multiplication and with conductor $M$. For a prime $\ell$, let $V_\ell(E)$ denote its $\ell$-adic (rational) Tate module. Attached to $E$, there is an $\ell$-adic representation
\begin{equation}\label{equation: ladicrep}
\varrho_{E,\ell}\colon G_\Q \rightarrow \Aut(V_\ell(E))\,,
\end{equation}
given by the action of $G_\Q$ on $V_\ell(E)$. 
Let $S$ be the set of primes dividing~$M\ell$, $G:=\SU(2)$, $\mu$ its Haar measure, and $X$ the set of conjugacy classes of $G$. For every $p\not\in S$, let 
$$
x_{p}:=\frac{\varrho_{E,\ell}(\Frob_{p})}{\sqrt p}\,.
$$
The Hasse-Weil bound (together with the fact that $\det(\varrho_{E,\ell})=\chi_\ell$) implies that~$x_p$ indeed defines a conjugacy class in $X$. Recall that the Modularity Theorem states that there exists a newform $f\in S_2(\Gamma_0(M))$ with rational coefficients (i.e, with $K_f=\Q$) such that $\varrho_{f,\ell}\simeq \varrho_{E,\ell}$.

\begin{corollary}
Let $E$ be an elliptic curve defined over $\Q$ without CM. Then, the sequence of normalized traces $\{a_{p_i}/\sqrt{p_i}\}_{i\geq 1}$ is equidistributed on $[-2,2]$ with respect to the measure
\begin{equation}\label{equation: mu no cm}
\mu_{\rm{ST}}:=\frac{1}{2\pi} \sqrt{4-z^2} dz\,.
\end{equation}
\end{corollary}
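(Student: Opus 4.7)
The plan is to reduce to Corollary \ref{corollary: equid no cm} via the Modularity Theorem, and then push the resulting equidistribution statement on the space of conjugacy classes of $\SU(2)$ down to the interval $[-2,2]$ via the trace map.

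First, I invoke the Modularity Theorem to obtain a newform $f \in S_2(\Gamma_0(M))$ with $K_f = \Q$ and trivial Nebentypus $\varepsilon$ such that $\varrho_{f,\ell} \simeq \varrho_{E,\ell}$; in particular $a_p = a_p(f)$ for all $p \nmid M\ell$. Since $E$ has no complex multiplication, the same is true of $f$ (this is where one uses that a CM newform of weight $2$ with rational coefficients would force $E$ itself to have CM by the associated imaginary quadratic field). Because $\varepsilon$ is trivial, the group $G$ of (\ref{equation: STgmf}) reduces to $\USp(2) = \SU(2)$, and the Haar-pushforward measure $\mu$ on its set $X$ of conjugacy classes agrees with the measure appearing in Corollary \ref{corollary: equid no cm}. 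That corollary therefore applies and shows that $\{x_{p_i}\}_{i \geq 1} \subseteq X$ is $\mu$-equidistributed, where $x_p = \varrho_{f,\ell}(\Frob_p)/\sqrt{p} = \varrho_{E,\ell}(\Frob_p)/\sqrt{p}$.

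Next I transport this statement through the continuous trace map $\Tr\colon X \to [-2,2]$. By (\ref{equation: char poly}) with $k=2$ and $\varepsilon$ trivial, the eigenvalues of $x_p$ are $\alpha_p, \beta_p$ with $\alpha_p \beta_p = 1$ and $\alpha_p + \beta_p = a_p/\sqrt{p}$. Thus for any $h \in C([-2,2])$, the function $h \circ \Tr$ lies in $C(X)$, so $\mu$-equidistribution of $\{x_{p_i}\}$ implies equidistribution of $\{a_{p_i}/\sqrt{p_i}\}$ with respect to the pushforward $\Tr_* \mu$ on $[-2,2]$. It therefore only remains to identify this pushforward with $\mu_{\rm ST}$.

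The main (and only nontrivial) computation is this last identification, which is a standard application of the Weyl integration formula on $\SU(2)$: parametrizing a maximal torus by $\diag(e^{i\theta}, e^{-i\theta})$ for $\theta \in [0,\pi]$, the Haar measure pushes forward to $\frac{2}{\pi}\sin^2(\theta)\,d\theta$ on $[0,\pi]$, and the substitution $z = 2\cos(\theta)$, $dz = -2\sin(\theta)\,d\theta$, $\sin(\theta) = \tfrac{1}{2}\sqrt{4-z^2}$, transforms this into $\frac{1}{2\pi}\sqrt{4-z^2}\,dz = \mu_{\rm ST}$. Combining the three steps yields the desired equidistribution statement.
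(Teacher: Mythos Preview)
Your proof is correct and follows essentially the same approach as the paper: apply Corollary~\ref{corollary: equid no cm} (via modularity, which the paper recalls immediately before the statement) and then push the Haar measure of $\SU(2)$ forward along the trace map to $[-2,2]$. You supply more detail than the paper does---in particular the explicit Weyl integration computation and the remark on why $f$ inherits the non-CM hypothesis---whereas the paper simply asserts that the trace pushforward equals $\mu_{\rm ST}$, but the strategy is identical.
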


\begin{proof}
By Corollary~\ref{corollary: equid no cm}, the sequence of $\{x_{p_i}\}_{i\geq 1}$ is $\mu$-equidistributed over~$X$. It is then enough to observe that the projection of the measure $\mu$ on the set of conjugacy classes of $\SU(2)$ by the trace map on $[-2,2]$ is $\mu_{\rm{ST}}$.
\end{proof}

\subsubsection{A Cebotarev-Sato-Tate Density Theorem}

In \S\ref{section: CM ST} and \S\ref{section: classic ST}, we have already implicitly seen hybrid forms of the Cebotarev Density Theorem and the Sato-Tate Conjecture in the case that the extension of fields involved is abelian. In this section, we recall analogous results for general Galois extensions.

Let $E$ be an elliptic curve defined over $\Q$ without complex multiplication. Let $F=\Q$ and $L/\Q$ be a finite Galois extension. Let $G:=\SU(2)\times \Gal(L/\Q)$ and let~$S$ be the set of primes made up by $\ell$ and those primes at which~$E$ has bad reduction or at which $L/\Q$ ramifies. For $p\not \in S$, denote by $x_p$ the conjugacy class of $\frac{\varrho_{E,\ell}(\Frob_p)}{\sqrt p} \times \Frob_p$, where $\varrho_{E,\ell}$ is as in (\ref{equation: ladicrep}). The Haar measure~$\mu$ of~$G$ is the product of the Haar measure of $\SU(2)$ and the discrete measure on $\Gal(L/\Q)$.  We still denote by $\mu$ its image on the set $X$ of conjugacy classes of $G$.

Observe that the irreducible representations of $G$ are of the form $\Sym^m(\C^ 2)\otimes \varrho$, where $\C^ 2$ denotes the standard representation of $\SU(2)$, and $\varrho$ is an irreducible Artin representation of $\Gal(L/\Q)$.

\begin{theorem} One has:
\begin{enumerate}[i)]
\item If $m>0$ or $\varrho$ is nontrivial, the $L$-function $L(\Sym^m(\C^ 2)\otimes \varrho)$ is holomorphic and nonvanishing for $\Re(s)\geq 1$.
\item The sequence of $\{x_{p_i}\}_{i\geq 1}$ is $\mu$-equidistributed.
\item For any conjugacy class $c$ of $\Gal(L/\Q)$, the subsequence of  $\{a_{p_i}/\sqrt {p_i}\}_{i\geq 1}$ of those $p_i$ such that $\Frob_{p_i}=c$ is $\mu_{\rm{ST}}$-equidistributed.  
\end{enumerate}
\end{theorem}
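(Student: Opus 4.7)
The overall plan is to tackle (i) first, then deduce (ii) directly from Theorem~\ref{theorem: equidistribution}, and finally extract (iii) from (ii) by a simple argument involving the indicator function of~$c$. Since $\SU(2)\times \Gal(L/\Q)$ is a compact group that is a product of a compact Lie group and a finite group, its irreducible representations are, by the usual external tensor product construction, exactly $\Sym^m(\C^2)\boxtimes \varrho$ where $m\geq 0$ and $\varrho$ is an irreducible Artin representation of $\Gal(L/\Q)$. The Euler factor at $p\notin S$ of the associated $L$-function matches that of $\Sym^m(\varrho_{E,\ell})\otimes \varrho$ as a $G_\Q$-representation (up to the conventional normalization by $p^{(k-1)/2}=\sqrt p$), so (i) amounts to showing that $L(\Sym^m(\varrho_{E,\ell})\otimes \varrho,s)$ is holomorphic and nonvanishing on $\Re(s)\geq 1$ whenever $m>0$ or $\varrho$ is nontrivial. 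The case $m=0$ (with $\varrho$ nontrivial) is immediate from Theorem~\ref{theorem: Art L}, so the interesting case is $m\geq 1$.

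For $m\geq 1$, I would reduce to the classical Sato-Tate inputs via Brauer's induction theorem. Writing $\varrho=\sum_i n_i\,\mathrm{Ind}_{H_i}^{\Gal(L/\Q)}\chi_i$ with $n_i\in\Z$, $H_i$ elementary (hence solvable), and $\chi_i$ one-dimensional characters of $H_i$, we obtain by the projection formula
\begin{equation*}
\Sym^m(\varrho_{E,\ell})\otimes \varrho\;=\;\sum_i n_i\,\mathrm{Ind}_{G_{L_i}}^{G_\Q}\bigl(\Sym^m(\varrho_{E/L_i,\ell})\otimes \chi_i\bigr),
\end{equation*}
where $L_i$ is the fixed field of $H_i$. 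Artin formalism for $L$-functions of induced representations then gives
\begin{equation*}
L(\Sym^m(\varrho_{E,\ell})\otimes \varrho,s)\;=\;\prod_i L\bigl(\Sym^m(\varrho_{E/L_i,\ell})\otimes \chi_i,\,s\bigr)^{n_i}.
\end{equation*}
Since $E/\Qbar$ has no CM, each base change $E/L_i$ is non-CM; by modularity of $E$, together with solvable cyclic base change (Langlands, Arthur--Clozel) to each $L_i$, and the symmetric power potential automorphy theorem (Theorem~\ref{theorem: ST} extended to the Hilbert/general number field setting as in \cite{BLGHT11} and its descendants), each factor is holomorphic and nonvanishing on $\Re(s)\geq 1$. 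The main obstacle in this step is purely bibliographic: invoking symmetric power functoriality over an arbitrary number field $L_i$ with a twist by a finite-order Hecke character goes beyond the precise statement of Theorem~\ref{theorem: ST}, so one must appeal to the extensions of the Barnet-Lamb--Geraghty--Harris--Taylor method.

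Part (ii) is then a direct application of Theorem~\ref{theorem: equidistribution} to the compact group $G$ and the sequence $\{x_{p_i}\}_{i\geq 1}$, since every $L(\Sym^m(\C^2)\otimes\varrho,s)$ differs from the $L$-function just analysed by finitely many Euler factors. For (iii), fix a continuous function $f$ on the space of conjugacy classes of $\SU(2)$, extend it to the test function $F(g,\gamma):=f(g)\,\mathbf 1_c(\gamma)$ on $X$ (continuous because $\Gal(L/\Q)$ is discrete), and apply (ii). Writing $N_c(n):=\#\{i\leq n:\Frob_{p_i}=c\}$, one obtains
\begin{equation*}
\frac{1}{n}\sum_{i=1}^n F(x_{p_i})\;\longrightarrow\;\int f\,d\mu_{\SU(2)}\cdot\frac{\#c}{[L:\Q]},
\end{equation*}
while the Cebotarev Density Theorem (Corollary~\ref{corollary: Cebotarev}) gives $N_c(n)/n\to \#c/[L:\Q]$. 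Dividing these asymptotics and pushing forward $\mu_{\SU(2)}$ to $[-2,2]$ via the trace (yielding $\mu_{\rm{ST}}$, as recalled in \S\ref{section: non CM}) produces the desired subsequential equidistribution.
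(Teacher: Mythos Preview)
The paper gives no argument of its own here: its entire proof is the citation ``See \cite[Thm.~1]{MM}''. Your sketch is therefore already more than what the paper supplies, and it is essentially the Murty--Murty argument: Brauer induction on~$\varrho$, Artin formalism to factor $L(\Sym^m(\varrho_{E,\ell})\otimes\varrho,s)$ as a product of twisted symmetric-power $L$-functions over the subfields $L_i$, and then an appeal to the analytic input from potential automorphy. Parts (ii) and (iii) are deduced exactly as you indicate, via Theorem~\ref{theorem: equidistribution} and the indicator-function trick combined with Cebotarev.

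One point deserves tightening. In part (i) you invoke ``solvable cyclic base change (Langlands, Arthur--Clozel) to each $L_i$'', but the fixed field $L_i=L^{H_i}$ of an elementary subgroup $H_i\subseteq\Gal(L/\Q)$ need not be a solvable (or even Galois) extension of~$\Q$; it is $L/L_i$ that is solvable, not $L_i/\Q$. So one cannot simply base-change the modular form attached to $E$ from $\Q$ to $L_i$. What is actually used---and what you correctly gesture at under ``Theorem~\ref{theorem: ST} extended to the general number field setting''---is the potential automorphy machinery: one passes to a large totally real Galois extension over which $\Sym^m E$ becomes automorphic, and then applies Brauer induction and Arthur--Clozel solvable descent \emph{from that field}, handling the Artin twist simultaneously. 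This is how \cite{MM} (relying on \cite{BLGHT11} and its companions) obtains the holomorphy and nonvanishing of each factor on $\Re(s)\ge 1$. With that adjustment, your outline is correct.
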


\begin{proof} See \cite[Thm. 1]{MM}.
\end{proof}

Let $E$ be an elliptic curve with complex multiplication by a quadratic imaginary field $K$, and assume that $E$ is defined over $K$. Let $F=K$ and $L/K$ be a finite Galois extension. Let $G:=\Unitary(1)\times \Gal(L/K)$. Let~$S$ be the set of primes made up by $\ell$ and those primes at which~$E$ has bad reduction or at which $L/K$ ramifies. For $\p\not \in S$, let $x_\p$ be the conjugacy class defined by $\frac{\psi_{E}(\Frob_\p)}{\sqrt{N(\p)}} \times \Frob_\p$, where $\psi_E$ is as in (\ref{equation: heckedeuchar}). The Haar measure~$\mu$ of~$G$ is the product of the Haar measure of $\Unitary(1)$ and the discrete measure on $\Gal(L/K)$. Denote also by $\mu$ its image on the set $X$ of conjugacy classes of $G$.

Observe that the irreducible representations of $G$ are of the form $\phi_a\otimes \varrho$, where~$\phi_a$ is as in (\ref{equation: phia}), and~$\varrho$ is an irreducible Artin representation of $\Gal(L/k)$.

\begin{theorem} One has:
\begin{enumerate}[i)]
\item If $a\in\Z^*$ or $\varrho$ is nontrivial, the $L$-function $L(\phi_a\otimes \varrho)$ is holomorphic and nonvanishing for $\Re(s)\geq 1$.
\item The sequence $\{x_{\p_i}\}_{i\geq 1}$ is $\mu$-equidistributed.
\item For any conjugacy class $c$ of $\Gal(L/K)$, the subsequence of  $\{a_{\p_i}/\sqrt {N(\p_i)}\}_{i\geq 1}$ of those $\p_i$ such that $\Frob_{\p_i}=c$ is $\mu_{\rm{cm}}$-equidistributed.  
\end{enumerate}
\end{theorem}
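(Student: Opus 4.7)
The plan is to apply Theorem~\ref{theorem: equidistribution} to the compact group $G = \Unitary(1) \times \Gal(L/K)$, paralleling the proof sketched in the previous subsection for non-CM elliptic curves, with $\SU(2)$ replaced by $\Unitary(1)$ and Theorem~\ref{theorem: ST} replaced by Theorem~\ref{theorem: Hecke}. The irreducible representations of $G$ are precisely the external tensor products $\phi_a \otimes \varrho$ with $a \in \Z$ and $\varrho$ an irreducible representation of $\Gal(L/K)$; such a representation is trivial exactly when both $a = 0$ and $\varrho$ is trivial, so the hypothesis of~(i) is equivalent to $\phi_a \otimes \varrho$ being nontrivial.

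For step~(i), I follow the strategy of Theorem~\ref{theorem: Art L}. By Brauer's theorem on induced characters, decompose $\varrho = \sum_i n_i \,\mathrm{Ind}_{H_i}^{\Gal(L/K)} \chi_i$ as a virtual sum ($n_i \in \Z$), where $\chi_i$ is a one-dimensional character of a subgroup $H_i \leq \Gal(L/K)$ corresponding to an intermediate field $K \subseteq K_i \subseteq L$. Via Artin reciprocity, identify $\chi_i$ with a finite-order Hecke character $\widetilde\chi_i$ of $K_i$. Inductivity of $L$-functions yields
$$
L(\phi_a \otimes \varrho, s) = \prod_i L\!\left((\psi_E^a \circ N_{K_i/K}) \cdot \widetilde\chi_i,\ s\right)^{n_i}
$$
up to finitely many holomorphic and nonvanishing Euler factors. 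Each factor is the $L$-function of an algebraic Hecke character of $K_i$, which by Theorem~\ref{theorem: Hecke} is holomorphic and nonvanishing on $\Re(s) \geq 1$ when the character is nontrivial, and coincides with the Dedekind zeta function $\zeta_{K_i}(s)$ (up to finitely many local factors) when the character is trivial; in the latter case it has a simple pole at $s = 1$ and is nonvanishing on $\Re(s) \geq 1$. Hence $L(\phi_a \otimes \varrho, s)$ is meromorphic on $\Re(s) \geq 1$ with at worst a pole or zero at $s = 1$ of order $\langle \mathbf{1}_G, \phi_a \otimes \varrho \rangle$, which vanishes under the hypothesis; this establishes~(i).

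Step~(ii) is then immediate from Theorem~\ref{theorem: equidistribution} using~(i). For step~(iii), since $\Gal(L/K)$ is finite discrete the indicator function $1_c$ of a conjugacy class $c$ is continuous on $X = \Unitary(1) \times \mathrm{Conj}(\Gal(L/K))$. For $f \in C([-2,2])$, applying~(ii) to the test function obtained by pulling back $f$ through the trace map $\Unitary(1) \to [-2,2]$ and multiplying by $1_c$ gives
$$
\lim_{n \to \infty} \frac{1}{n} \sum_{i \leq n} f\!\left(\frac{a_{\p_i}}{\sqrt{N(\p_i)}}\right) 1_c(\Frob_{\p_i}) = \frac{\#c}{[L:K]} \int_{-2}^{2} f \, d\mu_{\rm{cm}},
$$
since $\mu(\Unitary(1) \times c) = \#c/[L:K]$. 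Dividing by the density $\#c/[L:K]$ of the subsequence of primes with $\Frob_{\p_i} = c$ (supplied by Cebotarev, Corollary~\ref{corollary: Cebotarev}) yields the claimed $\mu_{\rm{cm}}$-equidistribution.

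The main obstacle is~(i): carrying out the bookkeeping of the Brauer decomposition so as to recognise each factor as the $L$-function of a genuine algebraic Hecke character of the intermediate field $K_i$, and to conclude that the product is holomorphic and nonvanishing at $s = 1$ precisely when $\phi_a \otimes \varrho$ is nontrivial. This is the CM counterpart of the reduction via Brauer from Artin $L$-functions to Hecke $L$-functions executed in Theorem~\ref{theorem: Art L}; once it is in place, (ii) and~(iii) are formal consequences of Theorem~\ref{theorem: equidistribution} and the continuity of $1_c$.
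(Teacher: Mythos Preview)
Your proof is correct and follows essentially the same strategy the paper points to: the paper's proof consists of the single sentence ``One may reduce to the abelian case by following the same strategy as in \cite[Prop.~3.6]{FS12},'' and your Brauer-induction argument is precisely such a reduction, carried out in full detail. The key points---that for $a\neq 0$ the base-changed character $(\psi_E^a\circ N_{K_i/K})\cdot\widetilde\chi_i$ has nontrivial infinity type and is therefore never trivial, and that for $a=0$ the pole orders sum via Frobenius reciprocity to $\langle\mathbf 1,\varrho\rangle$---are exactly what makes the reduction go through, and your deductions of~(ii) and~(iii) from~(i) via Theorem~\ref{theorem: equidistribution} and the indicator-function trick are the standard formal consequences.
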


\begin{proof}
One may reduce to the abelian case by following the same strategy as in \cite[Prop. 3.6]{FS12}.
\end{proof}

\subsubsection{A conjectural example}

Let $A$ be an abelian surface defined over the number field $F$. For a prime $\ell$, let
$$
\varrho_{A,\ell}\colon G_F\rightarrow \Aut(V_\ell(A))
$$
be the $\ell$-adic representation attached to $A$. Let $G:=\USp(4)$, $\mu$ its Haar measure, and $X$ its set of conjugacy classes. More explicitly, 
if $\mathfrak S_2$ denotes the symmetric group on two letters, there is a bijection between $[0,\pi]\times [0,\pi]/\mathfrak S_2$ and~$X$, obtained by sending the unordered pair $(\theta_1,\theta_2)$ to the conjugacy class of the diagonal matrix 
$$
\diag(e^{i\theta_1},e^{-i\theta_1},e^{i\theta_2},e^{-i\theta_2})\,.
$$ 
If $f\colon [0,\pi]\times [0,\pi]/\mathfrak S_2\rightarrow \C$ is a continuous function, then 
$$
\mu(f)=\frac{8}{\pi^2}\int_0^{\pi}\int_0^\pi f(\theta_1,\theta_2)(\cos(\theta_1)-\cos(\theta_2))^2\sin^2(\theta_1)\sin^2(\theta_2)d\theta_1 d\theta_2\,.
$$
Let $S$ the set of primes at which $A$ has bad reduction together with $\ell$, and for $\p\not\in S$, let $x_{\p}$ denote the conjugacy class of 
$$
\frac{\varrho_{A,\ell}(\Frob_\p)}{\sqrt {N(\p)}}\,.
$$
The fact that $x_{\p}$ defines a class of $X$ follows from the Weil Conjectures. The irreducible representations of $G$ are indexed by integers $a\geq b\geq 0$ and denoted by $\Gamma_{a,b}$ (see \cite[Chap. 16]{FH91}).
Via the Weyl Character Formula, they can be characterized in the following manner. Let $\{\alpha_1,\alpha_2, \overline \alpha_1, \overline \alpha_2\}$ be the eigenvalues of $x\in X$. If we denote by $J_d:=H_d(\alpha_1,\alpha_2, \overline \alpha_1, \overline \alpha_2)$, where $H_d$ stands for the $d$th complete symmetric polynomial in $4$ variables, then if $b\not=0$ (resp. $b=0$), we have
$$
\Tr(\Gamma_{a,b}(x))=\det\begin{pmatrix} J_a & J_{a+1}+J_{a-1}\\J_{b-1} & J_b+J_{b-2}\end{pmatrix} \qquad \left(\text{resp. } \Tr(\Gamma_{a,0}(x))=J_a \right)\,.
$$
The equidistribution of the classes $x_\p$ is linked to the holomorphicity and nonvanishing for $\Re(s)\geq 1$ of the $L$-functions $L(\Gamma_{a,b},s)$ (for $(a,b)\not=(0,0)$).

\begin{conjecture}\label{conjecture: g=2} If $\End_\Qbar(A)=\Z$, then the sequence $\{x_{\p_i}\}_{i \geq 1}$ is $\mu$-equidistributed.
\end{conjecture}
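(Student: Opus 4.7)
The plan is to apply Theorem~\ref{theorem: equidistribution} with $G = \USp(4)$. Since every nontrivial irreducible representation of $G$ is of the form $\Gamma_{a,b}$ for some pair of integers $(a,b) \neq (0,0)$ with $a\geq b\geq 0$, it suffices to show that each $L(\Gamma_{a,b}, s)$ attached by (\ref{equation: Lfunc}) to the sequence $\{x_{\p_i}\}_{i \geq 1}$ extends to a holomorphic function on $\Re(s) \geq 1$ that does not vanish at $s=1$.

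Following the blueprint successfully executed in \S\ref{section: classic ST}, the idea is to realize each $L(\Gamma_{a,b},s)$, up to finitely many Euler factors which are individually holomorphic and non-zero on $\Re(s)\geq 1$, as the standard $L$-function of a cuspidal automorphic representation $\Pi_{a,b}$ of $\mathrm{GL}_{n}$ over $F$, with $n = \dim \Gamma_{a,b}$; the desired analytic properties would then follow from Jacquet-Shalika. Two ingredients are needed. First, one must establish the potential automorphy of the compatible system $\{\varrho_{A,\ell}\}$: after a suitable finite extension $F'/F$, this system should correspond, via the spin embedding $\GSp_4 \hookrightarrow \GL_4$, to a cuspidal automorphic representation $\pi$ of $\GL_4$ over $F'$. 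The hypothesis $\End_\Qbar(A)=\Z$ ensures, by a theorem of Serre, that the image of $\varrho_{A,\ell}$ is open in $\GSp_4(\Q_\ell)$, which should supply the big-image conditions required by current automorphy lifting theorems. Second, one must transfer $\pi$ along each algebraic representation of the Langlands dual group $\GSp_4(\C)$ indexed by the pair $(a,b)$, and then descend via solvable base change to the ground field $F$.

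The main obstacle lies in this second step. Proving Langlands functoriality for the entire family $\{\Gamma_{a,b}\}$ is substantially harder than the symmetric power functoriality for $\GL_2$ that underlies Theorem~\ref{theorem: ST}, because the dimensions $\dim \Gamma_{a,b}$ grow polynomially of degree three in $(a,b)$ and the relevant tensor constructions land in much larger classical groups. A realistic attack would combine the potential automorphy results for abelian surfaces of Boxer-Calegari-Gee-Pilloni and related works with a Moret-Bailly type argument in the spirit of Harris-Shepherd-Barron-Taylor: one would locate each Galois representation $\Gamma_{a,b}\circ \varrho_{A,\ell}$ in the $\ell$-adic cohomology of an auxiliary family of Calabi-Yau varieties and run automorphy lifting on a suitable unitary group over a CM extension, in close analogy with the proof of Theorem~\ref{theorem: ST} by Barnet-Lamb, Geraghty, Harris, and Taylor. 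At present only partial transfers of this type are known, and it is precisely this functoriality gap which keeps Conjecture~\ref{conjecture: g=2} open.
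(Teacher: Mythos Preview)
The statement you are asked to prove is labeled \emph{Conjecture} in the paper, and indeed the paper offers no proof: immediately before Conjecture~\ref{conjecture: g=2} the author simply remarks that equidistribution of the classes $x_\p$ is linked to the holomorphicity and nonvanishing for $\Re(s)\geq 1$ of the $L$-functions $L(\Gamma_{a,b},s)$ for $(a,b)\neq(0,0)$, and leaves it at that. So there is no ``paper's own proof'' to compare against.

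Your proposal is not a proof either, and you say as much in your final paragraph. What you have written is a coherent outline of the expected strategy---reduce via Theorem~\ref{theorem: equidistribution} to the analytic properties of $L(\Gamma_{a,b},s)$, then seek automorphy of the representations $\Gamma_{a,b}\circ\varrho_{A,\ell}$ by a combination of potential automorphy for $\GSp_4$ and functorial transfer---together with an honest assessment of why the second step is currently out of reach. This is entirely in line with the paper's one-sentence discussion, only more detailed. The invocation of Serre's open image theorem for the hypothesis $\End_{\Qbar}(A)=\Z$ and the analogy with the Barnet-Lamb--Geraghty--Harris--Taylor argument are apt. In short: your write-up is a reasonable exposition of why the conjecture is open and what a proof would require, but it is not a proof, and neither is anything in the paper.
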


\section{The Sato-Tate group and the generalized Sato-Tate Conjecture}\label{section: def and conj}

In this section, given a smooth and projective variety $Y$ of dimension $n$ over a number field $F$ and a weight $0\leq \omega\leq 2n$, we define the Sato-Tate group attached to~$Y$ relative to the weight $\omega$. The reader might have found capricious the choices of the group $G$ in the examples of \S\ref{section: equiv and L}. The Sato-Tate group provides a uniform description of these choices of $G$. The description of the Sato-Tate group comes along with a general definition of the classes $x_\p$ that specializes to the \emph{ad hoc} definitions given in the examples, and once this is done we are ready to state the generalized Sato-Tate Conjecture. Both \S\ref{section: def ST} and \S\ref{section: stateconj} follow \cite[Chap.8]{Ser12}, where the general case of a scheme $Y$ of finite type over $\Z$ (not even separated) is considered. See \cite[Chap.9]{Ser12} and \cite{Ka13} for the relative case $Y\rightarrow T$, where~$T$ is an irreducible scheme of finite type over $\Z$ such that the residue field of its generic point has characteristic $0$. For the case of abelian varieties, we recall the relation between the Sato-Tate group and the Mumford-Tate group as described in \cite{BK12}. We end the section by considering the case of absolutely simple abelian varieties with complex multiplication.  

\subsection{Definition of the Sato-Tate group}\label{section: def ST}

Let $F$ be a number field and let~$Y$ be a smooth and projective variety defined over~$F$. Let~$S$ be a finite subset of the primes of~$F$ such that~$Y$ has good reduction\footnote{This means that there exists a smooth scheme over $\Spec(\mathcal O_F)\setminus S$, whose generic fiber is the $F$-scheme $Y$.} outside of $S$. 
Write $n:=\dim Y$, and choose an integer $0\leq \omega \leq 2n$. For any prime $\ell$, consider the $\omega$th \'etale cohomology group $H^{\omega}_{\et}(Y,\Q_\ell)$ of $\overline Y:=Y\times_F \overline F$. 
Let $m$ denote its dimension. The action of the absolute Galois group $G_F$ on $H^{\omega}_{\et}(Y,\Q_\ell)$ yields the $\ell$-adic representation 
$$
\varrho_Y^\omega\colon G_F\rightarrow \Aut(H^{\omega}_{\et}(Y,\Q_\ell))\subseteq \GL_m(\Q_\ell).
$$
For $\p\not \in S$, let $\Frob_\p^{-1}$ be a geometric Frobenius at $\p$, and define the local factor of~$Y$ at $\p$ to be the polynomial
$$
L_\p(Y,T):=\det(1-\varrho_Y^\omega(\Frob_\p^{-1})T;H^{\omega}_{\et}(Y,\Q_\ell))\,.
$$
The Weil Conjectures (see \cite[Th\'eor\`eme 1.6]{Del74}) state that $L_\p(Y,T)$ has integer coefficients, and that
$$
L_\p(Y,T)=\prod_{i=1}^{m}(1-\alpha_iT)\,,
$$
where the $\alpha_i$'s are $N(\p)$-Weil integers\footnote{This means that for every embedding $\sigma \colon \Q(\alpha_i)\hookrightarrow \C$, one has that $|\sigma(\alpha_i)|=N(\p)^{\omega/2}$.} of weight $\omega$.
The normalized local factor of~$Y$ at~$\p$ is defined as 
$$
\overline L_\p(Y,T):=L_\p(Y,N(\p)^{-\omega/2}T)\,.
$$

Let $G_\ell^{\omega}$ be the Zariski closure of $\varrho_Y^\omega(G_F)$, which we view as a $\Q_\ell$-algebraic subgroup of $\GL_m$. Let $G_\ell^{1,\omega}$ denote the Zariski closure of the image by $\varrho_Y^\omega$ of the kernel of the $\ell$-adic cyclotomic character $\chi_\ell$.
Choose an embedding 
$
\iota\colon \overline \Q_\ell\hookrightarrow \C
$ 
and denote by $G_{\ell,\iota}^{1,\omega}\subseteq \GL_m(\C)$ the group of $\C$-points of $G_\ell^{1,\omega}\times_\iota \C$.

\begin{definition}
The Sato-Tate group of $Y$ relative to the weight $\omega$ is a maximal compact subgroup of $G_{\ell,\iota}^{1,\omega}$. It is a compact real Lie group that we denote by $\ST(Y,\omega)$.
\end{definition}

Since all maximal compact subgroups of $G_{\ell,\iota}^{1,\omega}$ are conjugated, $\ST(Y,\omega)$ is well-defined up to conjugation. This will be enough for the purposes of equidistribution. Let $S_\ell$ be the union of $S$ and the set of primes of $F$ lying over $\ell$\footnote{Note a minor change of notation: the role played by the finite set $S$ in \S\ref{section: connL} will be played in this section by the finite set $S_\ell$.}.

\begin{definition} 
For $\p\not \in S_\ell$, let $x_\p$ denote the conjugacy class of $\ST(Y,\omega)$ corresponding to $\varrho_Y^\omega(\Frob_\p^{-1})^{ss}\otimes_\iota N(\p)^{-\omega/2}$, where the exponent ${}^{ss}$ means that we take the semisimple component.
\end{definition}

Observe that since $\varrho_Y^\omega(\Frob_\p^{-1})^{ss}\otimes_\iota N(\p)^{-\omega/2}$ has determinant $1$, it is indeed an element of $G_{\ell,\iota}^{1,\omega}$. Since it is semisimple, and all its eigenvalues have absolute value~$1$, it belongs to a compact subgroup. Since all compact subgroups are conjugated to $\ST(Y,\omega)$, it is conjugate to an element of $\ST(Y,\omega)$. And finally, it defines a conjugacy class of $\ST(Y,\omega)$, because the natural map $\Cl(\ST(Y,\omega))\rightarrow \Cl(G_{\ell,\iota}^{1,\omega})$ is injective (see \cite[\S8.3.3]{Ser12}).
Note that it is conjectured that $\varrho_Y^\omega(\Frob_\p^{-1})$ is semisimple (and it is known if, for example, $Y$ is an abelian variety).

By construction, we have that
$$
\det(1-x_{\p}T;H^{\omega}_{\et}(Y,\Q_\ell)\otimes_\iota\C)=\overline L_\p(Y,T).
$$ 

\subsection{Satement of the conjecture}\label{section: stateconj}

We can now state the main conjecture. Let $\{\p_i\}_{i\geq 1}$ be an ordering by norm of the primes of $F$ not in $S_\ell$.

\begin{conjecture}[generalized Sato-Tate]\label{conjecture: gen ST}
Let $X:=\Cl(\ST(Y,\omega))$ denote the set of conjugacy classes of $\ST(Y,\omega)$. Then:
\begin{enumerate}[i)]
\item The conjugacy class of $\ST(Y,\omega)$ in $\GL_m(\C)$ and the sequence $\{x_{\p_i}\}_{i\geq 1}\subseteq X$ depend neither on the choice of the prime $\ell$, nor on the choice of the embedding~$\iota$.
\item The sequence $\{x_{\p_i}\}_{i\geq 1}$ is equidistributed on $X$ with respect to the projection on this set of the Haar measure of $\ST(Y,\omega)$.
\end{enumerate}
\end{conjecture}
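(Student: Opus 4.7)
The plan is to apply the framework of Section 2 directly: by Theorem \ref{theorem: equidistribution}, part (ii) of the conjecture reduces to establishing, for every nontrivial irreducible continuous representation $\varrho$ of $\ST(Y,\omega)$, that the Euler product $L(\varrho,s)$ admits meromorphic continuation to the closed halfplane $\Re(s)\geq 1$ and is holomorphic and nonvanishing there. Since $\ST(Y,\omega)$ is a maximal compact subgroup of $G_{\ell,\iota}^{1,\omega}$, its irreducible representations correspond (after complexification) to algebraic representations of $G_\ell^{1,\omega}$; composing with $\varrho_Y^\omega$ yields a family of $\ell$-adic Galois representations of $G_F$ whose $L$-function is exactly $L(\varrho,s)$. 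Hence the task is to control the $L$-functions of all algebraic summands of all tensor constructions built from the motivic system attached to $Y$.

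For part (i), I would first invoke Deligne's theorem, already cited in the excerpt, that the local factor $L_\p(Y,T)$ has integer coefficients independent of $\ell$; this gives independence of $\ell$ (and of $\iota$) for the eigenvalues of $x_\p$. Passing from eigenvalues to the conjugacy class inside $\ST(Y,\omega)$, however, requires the considerably deeper statement that the algebraic group $G_\ell^{1,\omega}$ itself be independent of $\ell$, which in the abelian variety case is subsumed by the Mumford-Tate conjecture and in general is an open problem. One would thus either conditionally assume $\ell$-independence of $G_\ell^{1,\omega}$, or restrict attention to the classes of $Y$ where it is known (e.g. abelian varieties with a fully computed Mumford-Tate group, as discussed later in the paper).

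For part (ii), the plan is to establish the $L$-function properties by matching $L(\varrho,s)$ with automorphic $L$-functions on $\GL_n$ over $F$ or a suitable finite extension $F'/F$. The model is the proof of the classical Sato-Tate Conjecture (Theorem \ref{theorem: ST}, \cite{BLGHT11}), where potential automorphy of symmetric power representations attached to a modular form, combined with Brauer's theorem to descend from $F'$ back to $F$, yields the required holomorphicity and nonvanishing on $\Re(s)\geq 1$. In the conjectural generality of Conjecture \ref{conjecture: gen ST}, one would need analogous potential automorphy statements for every $\varrho\circ\varrho_Y^\omega$; by Brauer's theorem these reduce to the automorphy of representations induced from characters, which one attempts to establish via residual modularity plus automorphy lifting theorems adapted to the self-duality type imposed by the embedding $\ST(Y,\omega)\hookrightarrow \GL_m(\C)$.

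The main obstacle will be this last step: existing automorphy lifting technology handles two-dimensional and certain symmetric power or tensor product situations over totally real or CM fields, which is enough to cover the already proven cases recalled in \S\ref{subsection: examples L}, but is far from sufficient for an arbitrary smooth projective $Y$ and arbitrary weight $\omega$. A secondary but independent obstacle, essential even for making sense of (i), is the $\ell$-independence of $G_\ell^{1,\omega}$, still open outside of abelian varieties and a few other families of motives. Any unconditional progress on Conjecture \ref{conjecture: gen ST} beyond the current state of the art therefore must combine new automorphy lifting results with new cases of $\ell$-independence, and in my view the former is the genuinely hard part, since the latter is expected to follow from the formalism of motives once such a formalism is available.
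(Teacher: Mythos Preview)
The statement you are attempting to prove is labeled \emph{Conjecture} in the paper, and for good reason: it is open in the generality stated, and the paper makes no attempt to prove it. There is no ``paper's own proof'' to compare against. What you have written is not a proof but a (largely accurate) outline of the research program one would follow, together with an honest accounting of why that program is currently blocked.

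Your reduction of part (ii) to the analytic properties of $L(\varrho,s)$ via Theorem~\ref{theorem: equidistribution} is correct, and your identification of potential automorphy as the mechanism by which those properties would be obtained is exactly the strategy that succeeded in the cases of \S\ref{subsection: examples L}. Likewise, your diagnosis of part (i) as requiring $\ell$-independence of $G_\ell^{1,\omega}$ (essentially the Mumford-Tate or Algebraic Sato-Tate conjecture in the abelian variety case, and wide open in general) is correct. But neither ingredient is available for arbitrary smooth projective $Y$ and arbitrary $\omega$, and you acknowledge this yourself in your final paragraph. A proof proposal that concludes by naming the two open problems one would need to solve is not a proof; it is a restatement of why the conjecture remains a conjecture.
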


\subsection{Some examples revisited}

We now revisit some of the examples of~\S\ref{subsection: examples L}. In each case, we identify a smooth and projective variety $Y$, and show that the group~$G$ that we considered in~\S\ref{subsection: examples L} is precisely the Sato-Tate group of $Y$ (relative to the appropriate weight).
 
\subsubsection{Weight $\omega=0$}

Let $L/F$ be a finite Galois extension and let $Y$ be the $F$-scheme $\Spec(L)$. Let~$S$ be the set of primes of $F$ ramified in $L$.
Take $\omega=0$ and~$\ell$ a prime not in $S$. Since, in this case, \'etale cohomology is identified with Galois cohomology, we have an isomorphism of $G_F$-modules
$$
H^0_{\et}(\Spec(L),\Q_\ell)\simeq H^0(G_L,\overline L)\otimes \Q_\ell=L\otimes\Q_\ell\,.\footnote{Recall that the $0$th Galois cohomology group $H^0(G,M)$ of a $G$-module $M$ is the set of invariant elements $M^G$.}
$$ 
Therefore we have that $m=[L:F]$. Moreover, by taking a normal basis of $L/F$, we see that $\varrho_Y^0$ is identified with the permutation  representation of $\Gal(L/F)$ on the space $L\otimes\Q_\ell$. Since $\ell\not \in S$, the image of $\varrho_Y^0$ coincides with the image of the kernel of the cyclotomic character. Clearly, this image is Zariski closed, and isomorphic to $\Gal(L/F)$. Extending scalars to $\C$ and taking a maximal compact subgroup does not change anything in this case, so we obtain that $\ST(\Spec(L),0)$ is identified with $\Gal(L/F)$.

The Haar measure of $\Gal(L/F)$ is the discrete measure.
For $\p \not \in S_\ell$, the class~$x_{\p}$ is identified with $\Frob_\p^ {-1}\in\Cl(\Gal(L/F))$\footnote{Comparing with \S\ref{section: Chebotarev}, the reader may find the discrepancy of a ${}^{-1}$ exponent, introduced by the choice of a geometric Frobenius. However, the corresponding equidistribution statements are obviously equivalent.}. In this case, the statement of Conjecture \ref{conjecture: gen ST} is equivalent to the statement of the Cebotarev Density Theorem (see Corollary \ref{corollary: Cebotarev}).

\subsubsection{Weight $\omega=1$}

Let $Y=A/F$ be an abelian variety of dimension $g$. Then the following well-known isomorphism 
$$
H^\omega_{\et}(A,\Q_\ell)\simeq \bigwedge^{\omega} H^1_{\et}(A,\Q_\ell)\,,
$$
leads to
$$
\ST(A,\omega)\simeq \bigwedge^{\omega}\ST(A,1).\footnote{Note that the exterior power of $\ST(A,1)$ makes sense, since $\ST(A,1)$ is defined as a representation of a Lie group, rather than just as a Lie group.}
$$
Therefore, for abelian varieties we will take $\omega=1$, and we will only consider $\ST(A):=\ST(A,1)$. Let $S$ be the set of primes of bad reduction of $A$.

\textbf{The case $g=1$ without CM.}
Let $A=E$ be an elliptic curve defined over the number field $F$ and assume that is does not have complex multiplication. By Serre's open image Theorem (see \cite{Ser72}),  $G_\ell^1$ (resp. $G_\ell^{1,1}$) is $\GSp_2/\Q_\ell$ (resp. $\Sp_2/\Q_\ell$). 

A maximal compact subgroup of  $\Sp_2(\Q_\ell)\otimes_\iota \C$ is $\USp(2)=\SU(2)$. Thus we have that
$\ST(E)=\SU(2)$, and we see that in this concrete case the statement of Conjecture \ref{conjecture: gen ST} is equivalent to the statement of Corollary \ref{corollary: equid no cm}.

\textbf{The case $g=1$ with CM.}
Let $A=E$ be an elliptic curve with complex multiplication by $K$ and assume that $K\subseteq F$. Then the Zariski closure $G_\ell^1$
 is $T^K(\Q_\ell)$, where $T^K=\Res_{K/\Q}\mathbb G_m$ is the two dimensional torus\footnote{Recall that for any commutative $\Q$-algebra $A$, we have $T^K(A)=(A\otimes_\Q K)^*$.} defined by~$K$. Thus, extending scalars to $\C$, we have
$$
T^K(\Q_\ell)\otimes_{\iota}\C=(\Q_\ell\otimes_\Q K)^*\otimes_{\iota}\C=\C^*\times \C^*.
$$ 
Thus
$$
G^{1,1}_{\ell,\iota}=\left\{\left(z,\frac{1}{z}\right)\, |\, z\in \C^*\right\}\subseteq T^K(\C)\,.
$$
The maximal compact subgroup of $G^{1,1}_{\ell,\iota}\simeq \C ^*$ is $\Unitary (1)$, and thus $\ST(E)=\Unitary(1)$. We see that in this particular case the statement of Conjecture \ref{conjecture: gen ST} is equivalent to the statement of Corollary \ref{corollary: STCM}. If $F$ does not contain $K$, then $\ST(E)$ is the normalizer of the embedded $\Unitary(1)$ inside $\SU(2)$.

\textbf{Abelian variety $A$ of dimension $g$ with $\End_\Qbar(A)=\Z$.}
If $g$ is odd (or if $g=2$ or $6$) and $\End_\Qbar(A)=\Z$, a result of Serre shows that 
$G_\ell^{1,1}$ is $\Sp_{2g}/\Q_\ell$. Thus $\ST(A)=\USp(2g)$, and for $g=2$ we have that Conjecture ~\ref{conjecture: gen ST} reduces to Conjecture~\ref{conjecture: g=2}.

\subsection{The connection with the Mumford-Tate group}\label{section: MT group}

In this section, let $Y=A$ be an abelian variety of dimension $n=g$ defined over the number field~$F$. For a field extension $L/F$, we will denote by $A_L$ the base change of~$A$ from~$F$ to~$L$. Fix an embedding $F\hookrightarrow \C$.
The $2g$-dimensional real vector space $H_1(A_\C,\R)$ is endowed with a complex structure
$$
h\colon \C \rightarrow \End_\R(H_1(A_\C,\R))\,,
$$
obtained by identifying $H_1(A_\C,\R)$ with the tangent space of $A$ at the identity.
\begin{definition}

\begin{enumerate}[i)]
\item The Mumford-Tate group $\MT(A)$ of $A$ is the smallest algebraic subgroup of $\GL(H_1(A_\C,\Q))$ over $\Q$ such that $G(\R)$ contains $h(\C^*)$.
\item The Hodge group $\Hg(A)$ of $A$ is defined as $\MT(A)\cap \Sp_{2g}$.
\end{enumerate}
\end{definition}

Deligne \cite[I, Prop. 6.2]{Del82} showed that for every prime $\ell$ the connected component of $G_{\ell}^{1}$ (resp. of $G_{\ell}^{1,1}$) is contained in $\MT(A)\times_\Q\Q_\ell$ (resp. $\Hg(A)\times_\Q\Q_\ell$). Let us denote by $G^0$ the connected component of a topological group $G$.

\begin{conjecture}[Mumford-Tate] The inclusion $(G_{\ell}^{1})^0\subseteq \MT(A)\times_\Q\Q_\ell$ is an equality. Or equivalently, the inclusion $(G_{\ell}^{1,1})^0\subseteq \Hg(A)\times_\Q\Q_\ell$ is an equality.
\end{conjecture}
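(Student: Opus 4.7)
The plan is to attack the Mumford-Tate conjecture by combining Deligne's inclusion $(G_{\ell}^{1})^0\subseteq \MT(A)\times_\Q\Q_\ell$ with Faltings' work on endomorphisms and the theory of absolute Hodge cycles. First, one fixes an embedding $F\hookrightarrow \C$ together with a compatible embedding $\iota\colon \overline{\Q}_\ell \hookrightarrow \C$, so that the comparison isomorphism identifies $H_1(A_\C,\Q)\otimes_\Q\C$ (dually) with $V_\ell(A)\otimes_\iota\C$, where $V_\ell(A)=H^1_{\et}(A,\Q_\ell)$. Both groups in play are connected reductive subgroups of $\GL(V_\ell(A)\otimes_\iota\C)$: $\MT(A)$ is reductive because it preserves a polarization, and $(G_\ell^1)^0$ is reductive by Faltings' semisimplicity theorem for the Galois representation on $V_\ell(A)$.

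Since we are dealing with two connected reductive groups with one contained in the other, by Tannakian duality it suffices to check that every tensor construction fixed by $\MT(A)\times_\Q\Q_\ell$ is also fixed by $(G_\ell^1)^0$. Concretely, one must prove that the spaces of invariants coincide in every tensor construction $V_\ell(A^k)^{\otimes r}\otimes V_\ell(A^k)^{\vee\,\otimes s}(j)$. The $\MT(A)$-invariants in such a space are precisely the Hodge classes on $A^k$ (of the appropriate type and twist), while the $(G_\ell^1)^0$-invariants are precisely the Tate classes.

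Deligne's theorem that Hodge classes on abelian varieties are absolutely Hodge guarantees that every Hodge class maps to an $\ell$-adic class fixed by an open subgroup of $G_F$; the containment of invariant spaces this produces is exactly Deligne's inclusion. The reverse inclusion---that every Tate class on $A^k$ arises from a Hodge class---is precisely the Tate conjecture for tensor powers of $V_\ell(A)$, and via Faltings it is known in the very first case $r=s=1$, $k=1$, where both sides equal $\End^0_F(A)\otimes \Q_\ell$. The plan is to propagate this from endomorphisms to all higher tensor constructions.

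The main obstacle is transparent: the Tate conjecture for arbitrary tensor powers of $V_\ell(A)$ is an open problem of essentially the same depth as the Mumford-Tate conjecture itself, and no direct propagation from the $r=s=1$ case is known. A realistic piecewise strategy is therefore to proceed via the endomorphism algebra $\End^0(A_{\overline{\Q}})$, invoking Albert's classification together with the catalogue of Mumford-Tate groups compatible with each endomorphism type, and then verifying the conjecture case by case: CM techniques handle Type IV (Pohlmann), minuscule representation theory and Lie-theoretic rigidity handle many Type I, II, III cases (Serre, Chi, Tankeev, Banaszak--Gajda--Krason), and recent work of Cadoret--Moonen addresses various low-dimensional situations. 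A uniform proof appears to require genuinely new tools for transferring Hodge-theoretic information into the $\ell$-adic setting.
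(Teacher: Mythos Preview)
The statement you are attempting to prove is labeled as a \emph{Conjecture} in the paper, and the paper offers no proof of it whatsoever; it is simply recorded as the Mumford-Tate Conjecture, an open problem. There is therefore no ``paper's own proof'' to compare your proposal against.

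Your write-up is in fact not a proof but a survey of the known landscape, and you explicitly acknowledge this: you reduce the question to showing that every Tate class on powers of $A$ is a Hodge class, and then note that this ``is an open problem of essentially the same depth as the Mumford-Tate conjecture itself.'' That diagnosis is correct, and it is exactly why what you have written is not a proof. The case-by-case discussion you give at the end (CM, Albert types, low dimension) accurately reflects what is known, and indeed the paper later invokes the case $g\leq 3$ (Theorem~\ref{theorem: truth MTAST}) as a known instance. But none of this constitutes a proof of the general conjecture, nor does the paper claim one.
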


It follows that the Mumford-Tate group only accounts for the connected component of the Sato-Tate group.
The next conjecture (see \cite[C.3.3]{Ser77} and \cite{BK11}), which predicts the existence of an algebraic group that should account for the whole group $G_\ell^{1,1}$, may be seen as a refinement of the Mumford-Tate Conjecture.

\begin{conjecture}[Algebraic Sato-Tate]\label{conjecture: AST} There exists an algebraic subgroup $\AST(A)$ of $\Sp_{2g}/\Q$, called the \emph{Algebraic Sato-Tate group of $A$}, such that for each prime $\ell$, $G_\ell^{1,1}=\AST(A)\times_\Q\Q_\ell$.
\end{conjecture}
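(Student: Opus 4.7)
My plan is to follow the explicit approach of Banaszak--Kedlaya \cite{BK11} and construct $\AST(A)$ as a twisted Lefschetz group, recognising at the outset that Conjecture~\ref{conjecture: AST} refines the Mumford--Tate Conjecture (see \S\ref{section: MT group}), so any complete proof must at the very least settle Mumford--Tate. Let $F'/F$ be the minimal finite Galois extension of $F$ over which every geometric endomorphism of $A$ is defined (existence and finiteness follow from a theorem of Silverberg), and set $\Lambda:=\Gal(F'/F)$. Write $\Lef(A) \subseteq \Sp_{2g}/\Q$ for the Lefschetz group of $A$, defined as the centralizer in $\Sp_{2g}$ (with respect to a fixed polarisation) of the action of $\End_{F'}(A)\otimes\Q$ on $H_1(A_\C,\Q)$. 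The group $\Lambda$ acts on $\End_{F'}(A)\otimes\Q$ by outer automorphisms, and one defines the twisted Lefschetz group $\TL(A) \subseteq \Sp_{2g}/\Q$ as the finite disjoint union of cosets of $\Lef(A)$ realising this action. The proposed candidate for $\AST(A)$ is the subgroup of $\TL(A)$ generated by $\Hg(A)$ together with one representative of each coset.

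The next step would be to prove the equality $G_\ell^{1,1}=\AST(A)\times_\Q\Q_\ell$ by analysing the identity component and the component group separately. For the neutral part, I would invoke the Mumford--Tate Conjecture in its Hodge-group form to obtain $(G_\ell^{1,1})^0=\Hg(A)\times_\Q\Q_\ell$; this is known, for instance, for $g\leq 3$ and in the cases listed in \S\ref{section: MT group}. For the component group, the key observation is that the $\ell$-adic Galois representation on $V_\ell(A)$ permutes the $\Q_\ell$-linear extensions of elements of $\End_{\overline F}(A)\otimes\Q$, so that the map $G_\ell^{1,1}/(G_\ell^{1,1})^0\to\Lambda$ induced by restriction to $F'$ is an isomorphism independent of $\ell$. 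Combining the two identifications with the explicit description of $\TL(A)$ would then yield the desired equality.

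The principal obstacle is the reliance on the Mumford--Tate Conjecture, which is open in general: absent a proof of Mumford--Tate, one can at best verify unconditionally that $\AST(A)$ is well defined as an algebraic subgroup of $\Sp_{2g}/\Q$, that $G_\ell^{1,1}\subseteq \AST(A)\times_\Q\Q_\ell$, and that both sides have the same component group. A subtler technical issue is showing that the coset representatives used to twist $\Lef(A)$ can be chosen in an $\ell$-independent way up to $\AST(A)$-conjugacy; I would approach this via comparison of Frobenius conjugacy classes at distinct primes using Serre's results on the image of Galois in $\Aut(V_\ell(A))$. The appeal of the approach is that $\AST(A)$ is built entirely from the Hodge structure and the Galois action on endomorphisms, so the predicted $\ell$-independence of the $\Q$-form of $G_\ell^{1,1}$ emerges as a consequence rather than as an additional hypothesis.
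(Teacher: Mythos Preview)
The statement you are addressing is labelled and presented in the paper as a \emph{conjecture}, not a theorem: the paper offers no proof of Conjecture~\ref{conjecture: AST} in general, and indeed cannot, since (as you yourself note) any proof would in particular settle the Mumford--Tate Conjecture. There is therefore no proof in the paper against which to compare your attempt.

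That said, your proposal is a faithful rendering of the Banaszak--Kedlaya strategy, and it aligns closely with what the paper \emph{does} sketch later, in the proof of Theorem~\ref{theorem: truth MTAST}, where the conjecture is established for $g\le 3$. There the candidate is simply $\AST(A)=\TL(A)$, and the known Mumford--Tate Conjecture in those dimensions gives $(G_\ell^{1,1})^0=\Lef(A,1)\times_\Q\Q_\ell$; the Mumford examples in dimension~$4$ are cited precisely to show that $\TL(A)$ can be strictly too large in general. Your refinement---replacing the identity component of $\TL(A)$ by $\Hg(A)$---is the natural correction for that phenomenon and is in the spirit of \cite{BK11}, but it does not remove the dependence on Mumford--Tate: you still need $(G_\ell^{1,1})^0=\Hg(A)\times_\Q\Q_\ell$ to conclude. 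Your final paragraph already identifies this as the principal obstacle, so the proposal is best read not as a proof but as a summary of what is known unconditionally (the construction of a candidate $\AST(A)$, the inclusion $G_\ell^{1,1}\subseteq\AST(A)\times_\Q\Q_\ell$, and the matching of component groups) together with an honest account of what remains open.
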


When the Algebraic Sato-Tate Conjecture and Mumford-Tate Conjecture are true, then one can obtain the Sato-Tate group $\ST(A)$ (resp. the identity connected component $\ST(A)^0$) as a maximal compact subgroup of the group of complex points of $\AST(A)\times_\Q \C$ (resp. $\Hg(A)\times_\Q \C$). In particular, in this case, the Sato-Tate group depends neither on the choice of a prime $\ell$, nor on the choice of an embedding $\iota\colon \Q_\ell\hookrightarrow \C$ (we will come back to the Algebraic Sato-Tate Conjecture in \S\ref{section: clasres}).

\subsection{Yet another example: Abelian varieties with complex multiplication}\label{section: STCM}

Let $A$ be an abelian variety of dimension $g$ defined over the number field $F$. Suppose that $A$ has complex multiplication over $F$ in the sense that $\End(A_F)\otimes\Q$ contains a number field $E$ of degree $2g$. We will follow the exposition of \cite{Rib80} for Kubota's definition of the rank of a CM-type.

Fix an embedding of $\Qbar$ into $\C$. Suppose that $F/\Q$ is a Galois extension and that $E\subseteq F\subseteq\Qbar$, and let 
$$
G:=\Gal(F/\Q),\qquad H:=\Gal(F/E)\,.
$$
We will use the convention that $G$ acts on $E$ on the right. We identify $\Hom_\Q(E,\C)$ with $H\backslash G$, and write $c$ for complex conjugation\footnote{In \cite[\S4]{FGL13}, the case $F=E=K$ and $G=(\Z/\ell Z)^*$ was considered. This section is a straightforward generalization of that work.}.
The data $(A/F,E)$ define a CM-type $\mathcal S\subseteq \Hom_\Q(E,\C)$, that is, a subset of $H\backslash G$ such that $H\backslash G$ is the disjoint union of $\mathcal S$ and $\mathcal S c$. Denote by $\tilde {\mathcal S}$ the set $\{g\in G\,|\, Hg \in \mathcal S\}$. If we let $\tilde {\mathcal R}$ be the set $\tilde {\mathcal S}^{-1}$ of inverse elements of $\tilde {\mathcal S}$, define
$$
H':=\{ g\in G\,|\, g\tilde {\mathcal R} = \tilde {\mathcal R}\}\,.
$$ 
The subfield $K$ of $F$ fixed by $H'$ is called the \emph{reflex} field of $E$. It is known to be a CM field contained in $F$. Moreover, the image $\mathcal R$ of $\tilde {\mathcal R}$ in $H'\backslash G=\Hom(K,\C)$ defines a CM-type of $K$. 

Given an algebraic torus $T$, we will denote by $X(T)$ the right $G_\Q$-module 
$$
\Hom(T_\Qbar,\mathbb G_{m,\Qbar})\,,
$$
called the character group of $T$.
For a number field $L$, let $T^L$ denote the torus $\Res_{L/\Q}(\mathbb{G}_m)$. Observe that $X(T^L)$ may be identified with $\bigoplus_{\sigma\in\Hom_\Q(L,\C)}\Z[\sigma]$. It is equivalent to give a homomorphism $T^K\rightarrow T^E$ than to give a homomorphism $X(T^E)\rightarrow X(T^K)$. Let 
$$
\Phi\colon T^K\rightarrow T^E
$$
be the map, whose pull-back on character groups is given by
$$
\Phi^*\colon X(T^E)\rightarrow X(T^K)\,,\qquad \Phi^*([\sigma])=\sum_{r\in \mathcal R}[\tilde r\tilde \sigma]\,,\qquad \text{for }\sigma\in  \Hom_\Q(E,\C)\,.
$$
Here,~$\tilde\sigma$ and~$\tilde r$ denote prolongations to~$G$ of~$\sigma$ and~$r$, and for $g\in G$ we denote by~$[g]$ the induced element in $\Hom_\Q(K,\C)$. Note that $\Phi^*$ is independent of the choices of ~$\tilde\sigma$ and~$\tilde r$. Let $T_0$ denote the image of $\Phi$. The \emph{rank} of the CM-type $\mathcal S$ is defined as the dimension of $T_0$, or equivalently, as the rank of the $\Z$-submodule $\Phi^*(X(T^E))$ of $X(T^K)$. 
Consider the matrix
$$
D:=(i(\sigma,\tau))_{\sigma\in \mathcal R,\tau\in \mathcal S}\,,\quad\text{ where }\quad
i(\sigma,\tau)=
\begin{cases}
1 & \text{if $\tilde\sigma\tilde\tau^{-1}\in \tilde {\mathcal R}$,}\\
0 & \text{if $\tilde\sigma\tilde\tau^{-1}\not\in \tilde {\mathcal R}$.}
\end{cases}
$$
Here $\tilde\sigma,\tilde\tau$ are prolongations to $G$ of $\sigma,\tau$. Let $\nu$ denote the rank of the matrix $D$.

\begin{lemma}[Lemma 1, \cite{Kub65}]\label{lemma: image phik} The rank of $\mathcal S$ is equal to $\nu+1$.
\end{lemma}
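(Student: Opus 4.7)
The plan is to compute the rank of the $|H'\backslash G|\times|H\backslash G|$ integer matrix $M$ that represents $\Phi^*$ in the canonical bases, whose entries are $M_{\rho,\varsigma}=\chi_{\tilde{\mathcal R}}(\tilde\rho\tilde\varsigma^{-1})$ (here $\chi_{\tilde{\mathcal R}}$ denotes the characteristic function of $\tilde{\mathcal R}\subset G$), and to relate $\operatorname{rank}(M)$, which is by definition the rank of $\mathcal S$, to the rank of its $\mathcal R\times\mathcal S$ submatrix $D$.

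The first ingredient I would need is the centrality of complex conjugation: the maximal totally real subfield $F^+\subset F$ is Galois over~$\Q$, so $\Gal(F/F^+)=\langle c\rangle$ is a normal subgroup of order~$2$, hence central in~$G$. Combined with $\chi_{\tilde{\mathcal R}}(cg)=1-\chi_{\tilde{\mathcal R}}(g)$ (which follows from $G=\tilde{\mathcal R}\sqcup c\tilde{\mathcal R}$), this yields the symmetries
$$
M_{\rho,\varsigma c}=M_{\rho c,\varsigma}=1-M_{\rho,\varsigma},\qquad M_{\rho c,\varsigma c}=M_{\rho,\varsigma}\qquad(\rho\in\mathcal R,\ \varsigma\in\mathcal S).
$$
Ordering rows as $(\mathcal R,\mathcal R c)$ and columns as $(\mathcal S,\mathcal S c)$ then exhibits $M$ in the block form
$$
M=\begin{pmatrix}D&J-D\\ J-D&D\end{pmatrix},
$$
with $J$ the all-ones rectangle of the appropriate size.

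Next I would pass from the paired rows (indexed by $\rho$ and $\rho c$) to their sum and difference: the sum is the symmetric vector $(\mathbf 1,\mathbf 1)$ — the same for every $\rho$ — and the difference is the antisymmetric vector $(v_\rho,-v_\rho)$ with $v_\rho:=2D_{\rho\,\cdot}-\mathbf 1$. Since the symmetric and antisymmetric subspaces of $\Q^{\mathcal S}\oplus\Q^{\mathcal S c}$ intersect trivially,
$$
\operatorname{rank}(M)=1+\operatorname{rank}(2D-J).
$$

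The main obstacle is the identity $\operatorname{rank}(2D-J)=\operatorname{rank}(D)=\nu$, which is not purely formal. To secure it I would invoke the normalization — valid after possibly replacing $\mathcal S$ by $\mathcal S c$, an operation which keeps the reflex field and the rank unchanged — that the distinguished embedding $E\hookrightarrow F\hookrightarrow\C$ lies in $\mathcal S$; equivalently $1\in\tilde{\mathcal S}$, so $1\in\tilde{\mathcal R}$ and the identity coset $H'$ belongs to $\mathcal R$. The corresponding row of $D$ is then $D_{H',\varsigma}=\chi_{\tilde{\mathcal R}}(\tilde\varsigma^{-1})=1$ for every $\varsigma\in\mathcal S$ (because $\tilde\varsigma^{-1}\in\tilde{\mathcal S}^{-1}=\tilde{\mathcal R}$), so $\mathbf 1$ is a row of $D$ and likewise of $2D-J$. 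The identities $v_\rho=2D_{\rho\,\cdot}-\mathbf 1$ and $D_{\rho\,\cdot}=\tfrac12(v_\rho+\mathbf 1)$ then show that the row spans of $D$ and of $2D-J$ coincide, so $\operatorname{rank}(2D-J)=\nu$ and $\operatorname{rank}(\mathcal S)=\nu+1$.
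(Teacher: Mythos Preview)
Your approach is essentially the paper's: both obtain the block form $\begin{pmatrix} D & U-D\\ U-D & D\end{pmatrix}$ for the matrix of $\Phi^*$ and then reduce its rank to $\nu+1$ using that one row (your version) or one column (the paper's, the column $\tau=1$) of $D$ consists entirely of ones. The paper performs elementary block row/column operations, whereas you symmetrize/antisymmetrize the paired rows to reach $\operatorname{rank}(M)=1+\operatorname{rank}(2D-J)$ and then identify the row spans of $D$ and $2D-J$; these are cosmetic variations. You are in fact more explicit than the paper in flagging the normalization $1\in\tilde{\mathcal S}$.

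There is, however, a flaw in your justification of the block form. You claim that $F^+$ is Galois over $\Q$ and hence that $c$ is central in $G$, but $F$ is only assumed to be a Galois extension of $\Q$ containing the CM field $E$; it need not itself be CM. For instance, with $E=\Q(\zeta_3)$ and $F=\Q(\sqrt[3]{2},\zeta_3)$ one has $G\cong S_3$ and $c$ is a noncentral transposition. Fortunately centrality of $c$ in $G$ is not needed. Because $E$ and $K$ are CM fields, complex conjugation is intrinsic on each, which in group-theoretic terms says $Hgc=Hcg$ and $H'gc=H'cg$ for every $g\in G$. Hence $c\tilde\rho$ is as valid a lift of $\rho c$ as $\tilde\rho c$, and then
\[
M_{\rho c,\varsigma}=\chi_{\tilde{\mathcal R}}(c\tilde\rho\,\tilde\varsigma^{-1})=1-\chi_{\tilde{\mathcal R}}(\tilde\rho\tilde\varsigma^{-1})=1-M_{\rho,\varsigma}
\]
follows directly from $G=\tilde{\mathcal R}\sqcup c\tilde{\mathcal R}$ (which you already note); the other symmetry is obtained analogously using $G=\tilde{\mathcal R}\sqcup\tilde{\mathcal R}c$. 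With this correction your argument goes through.
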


\begin{proof}
Let $\overline D=(i(\sigma,\tau))_{\sigma\in H'\backslash G,\tau\in H\backslash G}$ denote the matrix of $\Phi^*$ in the basis $\mathcal S\cup \mathcal Sc$ for $H\backslash G$, and $\mathcal R\cup \mathcal Rc$ for $H'\backslash G$. Write $U$ for the $k\times g$ matrix whose entries are all ones, where $g$ and $k$ are integers such that $[E:\Q]=2g$ and $[K:\Q]=2k$.
We obtain
$$
\overline D=
\begin{pmatrix}
D & U-D\\
U-D & D
\end{pmatrix}\sim
\begin{pmatrix}
D & U-D\\
U  & U
\end{pmatrix}\sim
\begin{pmatrix}
D & U \\
U  & 2U
\end{pmatrix}\sim
\begin{pmatrix}
D & 0 \\
U  & U
\end{pmatrix}\sim
\begin{pmatrix}
D & 0 \\
0  & U
\end{pmatrix}\,,
$$
where for the penultimate equivalence we have used that the column corresponding to $\tau=1$ has only ones. 
\end{proof}

\begin{proposition} Let $A$ be an absolutely simple abelian variety defined over a number field $F$ such that $F/\Q$ is Galois. Suppose that $A$ has CM by a field $E\subseteq F$. Let $\nu+1$ be the rank of the CM-type of $A$. Then 
the Sato-Tate group of $A$ is isomorphic to
$\Unitary(1)\times\stackrel{\nu}\ldots\times \Unitary(1)\,.$
\end{proposition}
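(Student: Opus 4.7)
The plan is to identify $\ST(A)$ with a maximal compact subgroup of $\Hg(A)(\C)$, and then to compute $\Hg(A)$ explicitly from the CM-type.

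First I would observe that, since $F/\Q$ is Galois and $E\subseteq F$, the reflex field $K$ is automatically contained in $F$. By the classical theory of complex multiplication, this containment forces the Zariski closure $G_\ell^1$ of $\varrho_A^1(G_F)$ to be connected, and the Mumford-Tate conjecture (known for CM abelian varieties, after Pohlmann) gives $G_\ell^1=\MT(A)\times_\Q\Q_\ell$. Consequently $G_\ell^{1,1}=\Hg(A)\times_\Q\Q_\ell$ is connected as well, so $\ST(A)$ is just a maximal compact subgroup of $\Hg(A)(\C)$ with no outer component to worry about.

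Next I would identify $\MT(A)$ with the torus $T_0=\Phi(T^K)\subseteq T^E$. The $E$-action realizes $H_1(A_\C,\Q)$ as a free $E$-module of rank $1$, so $\MT(A)\subseteq T^E$; expressing the Hodge cocharacter in terms of the CM-type $\mathcal S$ and factoring it through the reflex CM-type $\mathcal R$ on $K$ produces exactly the map $\Phi$ with image $T_0$, and by Lemma~\ref{lemma: image phik} one has $\dim T_0=\nu+1$. Then I would cut out $\Hg(A)=\MT(A)\cap\Sp_{2g}$ as the kernel of the weight (similitude) character $\mu\colon T_0\to\mathbb G_m$. Since $\mu\circ\Phi$ is the reflex norm $T^K\to\mathbb G_m$, which is surjective, $\mu$ is surjective on $T_0$ and $\Hg(A)$ is a $\Q$-torus of dimension exactly $\nu$. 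Base-changing along $\iota$ gives $\Hg(A)\times_\Q\C\simeq(\C^*)^\nu$, whose maximal compact subgroup is $\Unitary(1)\times\stackrel{\nu}\ldots\times\Unitary(1)$, as required.

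The step I expect to be most delicate is the identification $\MT(A)=T_0$ (as opposed to a strict subgroup): Deligne's inclusion only gives one direction, and the reverse inclusion is precisely the content of Pohlmann's theorem for CM abelian varieties (all Hodge classes on powers of $A$ are generated by the obvious CM cycles). Once that is in hand, the rank computation is purely linear algebra on character lattices, already packaged in Lemma~\ref{lemma: image phik}, and the passage to the maximal compact subgroup of a complex torus is routine.
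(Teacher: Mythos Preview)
Your proposal is correct and follows essentially the same route as the paper: connectedness of $G_\ell^1$, the Mumford--Tate conjecture for CM abelian varieties, the identification $\MT(A)=T_0$ of dimension $\nu+1$, and hence $\Hg(A)$ a torus of dimension $\nu$, giving $\ST(A)\simeq\Unitary(1)^\nu$. The only differences are cosmetic: the paper deduces connectedness directly from absolute simplicity together with $E\subseteq F$ (rather than via the reflex field), and it cites \cite[Example~3.7]{Del82} for $\MT(A)=T_0$ and \cite{Yu13} for the Mumford--Tate conjecture where you invoke Pohlmann.
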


\begin{proof} As in \S\ref{section: def ST}, let $G_\ell^1$ denote the Zariski closure of the image of the $\ell$-adic representation attached to $A$. On the one hand, the hypothesis that $A$ is absolutely simple and that $E\subseteq F$ imply that $G_\ell^1$ is connected. On the other hand, since the Mumford-Tate Conjecture is known to hold for CM abelian varieties (see for example \cite{Yu13}), we have that (the connected component of) $G_\ell^1$ coincides with $\MT(A)\times \Q_\ell$ for every prime $\ell$. As explained in \S\ref{section: MT group}, we may then obtain $\ST(A)$ as a maximal compact subgroup of the group of complex points of $\Hg(A)\times_\Q\C$. By \cite[Example 3.7]{Del82}, we have that $\MT(A)$ is $T_0$, and therefore 
$\MT(A)(\C)\simeq \C^*\times\stackrel{\nu+1}\ldots\times \C^*$. Thus, $\Hg(A)(\C)\simeq \C^*\times\stackrel{\nu}\ldots\times \C^*$ from which the proposition follows. 
\end{proof}

We now recall a fundamental result of the theory of Shimura and Taniyama for abelian varieties with complex multiplication. 
\cite[Theorem 19.11]{Shi98} establishes that there exists an algebraic Hecke charakter $\chi$ of $F$ with values in $E$ and infinity type $\tilde {\mathcal R}$ such that for every prime $\p$ at which $A$ has good reduction
$$
L_\p(A,T)=\prod_{\tau\in\Hom(E,\C)}(1-\chi^\tau(\p)T)
$$
(see for example \cite[Theorem 9.1]{MT11}). Here we are making use of a slightly more general notion of algebraic Hecke character than the one used in (\ref{equation: Hecke char}). For an integral ideal $\mathfrak M$ of $F$, an algebraic Hecke character of $F$ with values in $E$ and infinity type $\tilde {\mathcal R}$ is a group homomorphism
$$
\chi\colon I_\mathfrak M\rightarrow E^*
$$
such that
\begin{equation}\label{equation: grossen}
\chi(\alpha \mathcal O_F)=\prod_{\sigma\in \tilde {\mathcal R}}\alpha^{\sigma}\qquad\text{for every $\alpha\in F^*$ with}\qquad \alpha\equiv^\times 1\pmod {\mathfrak M}\,.
\end{equation}
Here $I_\mathfrak M$ denotes the group of fractional ideals of $F$ coprime to $\mathfrak M$.

\begin{theorem}
Let $A$ be an absolutely simple abelian variety defined over a number field $F$ such that $F/\Q$ is Galois. Suppose that $A$ has CM by a field $E\subseteq F$. Then the generalized Sato-Tate Conjecture holds for $A$. More explicitly, let $\{\p_i\}_{i\geq 1}$ be an ordering by norm of the primes at which $A$ has good reduction. Let $\{\tau_1,\dots,\tau_\nu\}$ be a subset of $\mathcal S\subseteq H\backslash G=\Hom_\Q(E,\C)$ such that $(i(\sigma,\tau_j))_{\sigma\in \mathcal R,j=1,\dots,\nu}$ has rank $\nu$. Then the sequence 
$$
\left\{\left(\frac{\chi^{\tau_1}(\p_i)}{\sqrt{N(\p_i)}},\dots,\frac{\chi^{\tau_\nu}(\p_i)}{\sqrt{N (\p_i)}}\right)\right\}_{i\geq 1}\subseteq \Unitary(1)\times\stackrel{\nu}\ldots\times\Unitary(1)\simeq \ST(A)
$$
is equidistributed over $\Unitary(1)\times\stackrel{\nu}\ldots\times\Unitary(1)$ with respect to the Haar measure.
\end{theorem}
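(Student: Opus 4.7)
By the preceding proposition, $\ST(A)\simeq\Unitary(1)^\nu$, realized as a maximal compact subgroup of $\Hg(A)(\C)$. The plan is to apply Theorem~\ref{theorem: equidistribution} with $G=\Unitary(1)^\nu$, reducing the equidistribution statement to the holomorphicity and nonvanishing on $\Re(s)\geq 1$ of certain $L$-functions, and then to invoke Theorem~\ref{theorem: Hecke}. First I would make the isomorphism $\ST(A)\simeq\Unitary(1)^\nu$ explicit in terms of $\tau_1,\dots,\tau_\nu$: the Shimura-Taniyama factorization $L_\p(A,T)=\prod_{\tau\in\Hom(E,\C)}(1-\chi^\tau(\p)T)$ identifies the semisimple part of the normalized Frobenius $x_\p$ with the element of $T^E(\C)$ having $\tau$-coordinate $\chi^\tau(\p)/\sqrt{N(\p)}$, and the rank-$\nu$ hypothesis on $(i(\sigma,\tau_j))_{\sigma\in\mathcal{R},\,j}$ is precisely what guarantees that the characters $[\tau_j]|_{\Hg(A)}$ yield a $\Q$-basis of $X(\Hg(A))_\Q$. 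Under this identification, $x_\p$ becomes the tuple $(\chi^{\tau_j}(\p)/\sqrt{N(\p)})_{j=1}^\nu$.

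The nontrivial irreducible characters of $\Unitary(1)^\nu$ are the monomials $\phi_{\mathbf{a}}=\phi_{a_1}\otimes\dots\otimes\phi_{a_\nu}$ with $\mathbf{a}=(a_1,\dots,a_\nu)\in\Z^\nu\setminus\{0\}$. Substituting,
$$
\phi_{\mathbf{a}}(x_\p)=\prod_{j=1}^\nu \chi^{\tau_j}(\p)^{a_j}\cdot N(\p)^{-\sum_j a_j/2},
$$
which is the value at $\p$ of the unitarized algebraic Hecke character $\psi_{\mathbf{a}}:=\prod_j(\chi^{\tau_j})^{a_j}\cdot N(\cdot)^{-\sum_j a_j/2}$ of $F$. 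Therefore $L(\phi_{\mathbf{a}},s)$ coincides with the Hecke $L$-function $L(\psi_{\mathbf{a}},s)$ up to a finite number of Euler factors at primes dividing the conductor of $\chi$, all of them holomorphic and nonvanishing on $\Re(s)\geq 1$. By Theorem~\ref{theorem: Hecke}, it remains only to verify that $\psi_{\mathbf{a}}$ is nontrivial for every $\mathbf{a}\neq 0$.

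This last verification is the main obstacle. Triviality of $\psi_{\mathbf{a}}$ would in particular force the vanishing of its infinity type in $X(T^F)_\Q$, and unwinding the formula $\chi(\alpha\mathcal{O}_F)=\prod_{\sigma\in\tilde{\mathcal{R}}}\alpha^\sigma$ for $\alpha\equiv^\times 1\pmod{\mathfrak{M}}$ shows that this is equivalent to a linear dependence $\sum_j a_j[\tau_j]|_{\Hg(A)}=0$ in $X(\Hg(A))_\Q$. Using the partition $H'\backslash G=\mathcal{R}\sqcup\mathcal{R}c$ together with $i(\sigma c,\tau)=1-i(\sigma,\tau)$ and the description of $\overline{D}$ from the proof of Lemma~\ref{lemma: image phik}, this amounts, after quotienting by the similitude direction, to a linear relation among the columns $\mathbf{v}_j=(i(\sigma,\tau_j))_{\sigma\in\mathcal{R}}$ of the distinguished submatrix of $D$. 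The rank-$\nu$ hypothesis precisely rules out such a relation and forces $\mathbf{a}=0$. Combined with Theorems~\ref{theorem: Hecke} and~\ref{theorem: equidistribution}, this yields the desired equidistribution.
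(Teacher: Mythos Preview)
Your proposal is correct and follows essentially the same approach as the paper: reduce via Theorem~\ref{theorem: equidistribution} to the holomorphicity and nonvanishing of $L(\phi_{\mathbf a},s)$, identify this with a Hecke $L$-function, and then show the Hecke character is nontrivial by analyzing its infinity type against the rank-$\nu$ hypothesis. The only difference is one of packaging: where you argue abstractly via restriction of characters to $\Hg(A)$ and ``quotienting by the similitude direction,'' the paper carries out the same step explicitly by setting $B=\tfrac12\sum_j b_j$, invoking Artin's linear independence of characters to obtain $\sum_j b_j\,i(\sigma,\tau_j)=B$ for all $\sigma$, and evaluating at $\sigma=1$ (where $i(1,\tau_j)=1$) to force $B=0$ before reaching the same contradiction with the rank condition.
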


\begin{proof}
The irreducible representations of $\Unitary(1)\times\stackrel{\nu}\ldots\times\Unitary(1)$ are 
the characters
\begin{equation}\label{equation: irred rep}
\phi_{b_1,\dots,b_{\nu}}\colon \Unitary(1)\times \stackrel{\nu}\ldots\times\Unitary(1)\rightarrow \C^*,\qquad \phi_{b_1,\dots,b_{\nu}}(z_1,\dots,z_{\nu})=\prod_{i=1}^{\nu}z_i^{b_i}\,,
\end{equation}
where $b_1,\dots,b_{\nu}\in \Z$. By Theorem \ref{theorem: equidistribution}, it suffices to prove that for any $b_1,\dots,b_{\nu}\in \Z$, not all of them zero, the $L$-function
$$
\prod_{i\geq 1}\left(1-\frac{\prod_{j=1}^\nu \chi^{\tau_j}(\p_i)^{b_{j}}}{N(\p_i)^{\sum_{j=1}^{\nu}b_j/2}}N(\p_i)^{-s  }\right)^{-1}
$$
is holomorphic and nonvanishing for $\Re(s)\geq 1$.
But, up to a finite number of local Euler factors, this is just the $L$-function of the unitarized algebraic Hecke charakter 
$$
\Psi:=\frac{\prod_{j=1}^\nu \chi^{\tau_j}(\cdot)^{b_{j}}}{N(\cdot)^{\sum_{j=1}^{\nu}b_j/2}}\,.  
$$
By `unitarized' we mean that it takes values in $\Unitary(1)\subseteq \C^*$. By Theorem \ref{theorem: Hecke}, the $L$-function of a nontrivial unitarized algebraic Hecke charakter is holomorphic and nonvanishing for $\Re(s)\geq 1$.  Therefore, it only remains to prove that the Hecke charakter $\Psi$ is nontrivial. Suppose that $\Psi$ were trivial, and let $B:=\frac{b_1+\dots+b_{\nu}}{2}$. Then for every $\alpha\in K^*$ such that $\alpha\equiv^\times 1 \pmod{\mathfrak M}$ equation (\ref{equation: grossen}) implies
\begin{equation}\label{equation: inf dem}
\begin{array}{lll}
1 =\Psi(\alpha\mathcal O_k)&=&\displaystyle{\alpha^{\sum_{j=1}^\nu b_j\sum_{\sigma\in \tilde R} [\sigma\tilde\tau_j] -\sum_{\sigma\in G}B[\sigma]}}\\[4pt]
& = & \displaystyle{\alpha^{[F:K]\left(\sum_{j=1}^\nu b_j\Phi^*([\tau_j]) -\sum_{\sigma\in H'\backslash G}B[\sigma]\right)}} \\[4pt]
&=&\displaystyle{\alpha^{[F:K]\left(\sum_{\sigma\in H'\backslash G} \sum_{j=1}^\nu b_ji(\sigma,\tau_j)[\sigma] -\sum_{\sigma\in H'\backslash G}B[\sigma]\right)}}\,.
\end{array}
\end{equation}
A well-known theorem of Artin states that the characters of a monoid in a field are linearly independent. Viewing the elements $\sigma\in H'\backslash G=\Hom(K,\C)$ as characters of the monoid $\{\alpha\in K^*\,|\,\alpha\equiv^\times 1\pmod{\mathfrak M}\}$ in $\C$, we deduce that for every $\sigma\in H'\backslash G$, we have $\sum_{j=1}^\nu b_j i(\sigma,\tau_j)=B$. In particular, for $\sigma=1$, we have
$$
B=\sum_{j=1}^\nu b_j i(1,\tau_j)=\sum_{j=1}^\nu b_j=2B\,,
$$ 
from which we deduce that $B=0$. This implies that
$$
\sum_{j=1}^\nu b_j i(\sigma,\tau_j)=0\qquad \text{for every $\sigma \in \mathcal R$,}
$$
which is a contradiction with the fact that $(i(\sigma,\tau_j))_{\sigma\in \mathcal R,j=1,\dots,\nu}$ has rank $\nu$, and that not all of the $b_j$'s are zero.
\end{proof}

See \cite[Prop. 15]{Joh13} for a more general result valid for abelian varieties with potential complex multiplication over a number field, that is, abelian varieties over a number field adquiring complex multiplication after base change to a finite algebraic extension.

\section{Group-theoretic classification results}\label{section: GT results}

In this section we recall some results of \cite{FKRS12} and \cite{FKS12} concerning the classification of Sato-Tate groups of certain families of selfdual motives with rational coefficients and fixed values of weight and Hodge numbers. Serre \cite{Ser95} has given a more general construction of the Sato-Tate group (than the one seen in \S\ref{section: def ST}) that applies to the context of motives. Moreover, Serre has listed a series of properties that the Sato-Tate group should enjoy, assuming the validity of certain conjectures from the motivic folklore (e.g. the Mumford-Tate Conjecture). These are the so-called \emph{Sato-Tate axioms}. 

It is in this context that the problem of obtaining a group-theoretic classification of Sato-Tate groups for
selfdual motives with rational coefficients of fixed weight $\omega$, dimension $m$, and Hodge numbers $h^{p,q}$ arises. 

More precisely, this problem consists on identifying groups
obeying the following axioms (as formulated in \cite{FKRS12}):
\begin{enumerate}
\setlength\itemindent{10pt}
\item[(ST1)]
The group $G$ is a closed subgroup of~$\USp(m)$ (if~$\omega$ is odd) or $\operatorname{O}(m)$ (if~$\omega$ is even).
\item[(ST2)] (Hodge condition)
There exists a subgroup $H$ of $G$, called a \emph{Hodge circle}, which is the image of 
a homomorphism $\theta: \Unitary(1) \to G^0$
such that $\theta(u)$ has eigenvalues $u^{p-q}$ with multiplicity $h^{p,q}$.
Moreover, $H$ can be chosen so that the conjugates of $H$ generate a dense subgroup of $G^0$. Here, $G^0$ stands for the connected component of $G$.
\item[(ST3)] (Rationality condition)
For each component $C$ of $G$ and each irreducible character $\chi$ of $\GL_{m}(\C)$, the expected value 
$$
\int_{g\in C}\chi(g) \mu(g)
$$
of $\chi(\gamma)$ over $\gamma\in C$ under the Haar measure of $G$ is an integer.
\end{enumerate}

\begin{remark} Consider the tautological representation of $\GL_m(\C)$ on a complex vector space $V\simeq \C^ m$ of dimension $m$. To test $\mathrm{(ST3)}$, we often take the representation $V\otimes V^*$, where $V^*$ denotes the contragredient representation of $V$. Let $\chi$ denote the character of $V\otimes V^*$, and let $\mu$ denote the Haar measure of $G$. In this case
$$
\int_{g\in G}\chi(g) \mu(g)=\int_{g\in G} \Tr(g)\Tr(\overline g^t) \mu(g)=\int_{g\in G} |\Tr(g)|^2 \mu(g)=\dim(\Hom_{\C[G]}(V,V))
$$
is always an integer. It is the \emph{finer} condition
$$
\int_{g\in C} |\Tr(g)|^2 \mu(g)\in \Z\,,
$$
where $C$ is a connected component of $G$, that is actually a restrictive condition.
\end{remark}

\begin{remark}\label{remark: not rat}
Observe that $\mathrm{(ST1)}$ and $\mathrm{(ST3)}$ do not necessarily hold if one considers motives that are not selfdual or without rational coefficients. For example, let $f\in S_k(\Gamma_0(M),\varepsilon)$ be a newform without complex multiplication. Note that in this case one has
$$
\omega=k-1, \qquad m=2, \qquad h^{k-1,0}=h^{0,k-1}=1\,.
$$ 
Then the Sato-Tate group of the motive associated to $f$ is given by (\ref{equation: STgmf}). Its components do not necessarily have rational moments, and it is obviously not contained in $\SU(2)=\USp(2)$.
\end{remark}

\begin{remark}
For fixed $\omega,m,h^{p,q}$, there are only finitely many groups $G$ satisfying $\mathrm{(ST1)}$, $\mathrm{(ST2)}$, $\mathrm{(ST3)}$
up to conjugation within $\USp(m)$ or $\operatorname{O}(m)$ (see \cite[Remark~3.3]{FKRS12}). Note however, that if we ignore the hypothesis of selfduality or of rationality of the coefficients, then for fixed $\omega,m,h^{p,q}$ there may exist infinitely many possibilities for the Sato-Tate group. Take for example $f_n\in S_k(\Gamma_0(M_n),\varepsilon_n)$ such that $\ord(\varepsilon_n)$ tends to $\infty$ with $n$. 
\end{remark}

For the case of weight $\omega=1$ and Hodge numbers $h^{0,1}=h^{1,0}=1$, there are up to conjugation only 3 different groups satisfying $\mathrm{(ST1)}$, $\mathrm{(ST2)}$, $\mathrm{(ST3)}$: 
\begin{equation}\label{equation: w=1}
\Unitary(1),\medspace N_{\SU(2)}(\Unitary(1)),\medspace \SU(2)\,.
\end{equation}
Here, by $N_{\SU(2)}(\Unitary(1))$, we mean the normalizer of 
$$
\left\{\begin{pmatrix}
z & 0\\
0 & \overline z
\end{pmatrix} \, |\, z\in \C^*,\,|z|=1\right\}
$$
inside $\SU(2)$. The classification has been carried out in two other nontrivial cases.

\begin{theorem}\cite[Thm. 1.2.]{FKRS12}\label{theorem: classification w=1} If $\mathrm{(ST1)}$, $\mathrm{(ST2)}$, $\mathrm{(ST3)}$ are true, then the Sato-Tate group of a selfdual motive with rational coefficients of weight 1 and Hodge numbers $h^{0,1}=h^{1,0}=2$ is conjugate to one of $55$ particular groups. The possible connected components of these groups are:
$$
\Unitary(1),\medspace \SU(2),\medspace \Unitary(1) \times \Unitary(1),\medspace \Unitary(1) \times \SU(2),\medspace \SU(2) \times \SU(2),\medspace \USp(4)\,.
$$
The number of groups (up to conjugation) with each connected component is $32$, $10$, $8$, $2$, $2$, $1$, respectively.
\end{theorem}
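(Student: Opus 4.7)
The plan is to classify the compact subgroups $G \subseteq \USp(4)$ satisfying $\mathrm{(ST1)}$--$\mathrm{(ST3)}$ in three stages: pin down the possible identity components $G^0$ via the Hodge condition, enumerate for each $G^0$ the finite extensions $G/G^0$ inside $N_{\USp(4)}(G^0)$, and then eliminate those failing the rationality condition. Throughout, two candidate groups are identified when they are conjugate in $\USp(4)$.

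For the first stage, note that since $\omega=1$ is odd and $m=4$, axiom $\mathrm{(ST1)}$ forces $G \subseteq \USp(4)$. The Hodge circle $\theta\colon \Unitary(1) \to G^0$ has eigenvalues $u,u,u^{-1},u^{-1}$ (from $h^{1,0}=h^{0,1}=2$), so after a $\USp(4)$-conjugation $\theta(u)=\diag(u,u,u^{-1},u^{-1})$. I would then classify the connected compact Lie subgroups of $\USp(4)$ that contain this circle and are topologically generated by its conjugates. A direct Lie-algebra computation in $\mathfrak{sp}(4)$, enumerating the compact subalgebras that contain and are generated by $\mathrm{Ad}$-conjugates of the tangent line to this Hodge circle, shows the only possibilities are $\Unitary(1)$, $\SU(2)$, $\Unitary(1)\times\Unitary(1)$, $\Unitary(1)\times\SU(2)$, $\SU(2)\times\SU(2)$, and $\USp(4)$, as listed.

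For the second stage, for each candidate $G^0$ I would compute the normalizer $N:=N_{\USp(4)}(G^0)$ and describe the finite quotient $N/G^0$ explicitly, then enumerate subgroups $H \subseteq N/G^0$ up to $N$-conjugation; each such $H$ determines a candidate $G$ as the preimage of $H$ in $N$. For $G^0=\USp(4)$ we have $N=G^0$ and exactly one candidate; for $\SU(2)\times\SU(2)$ and $\Unitary(1)\times\SU(2)$, the quotient $N/G^0$ is small (of order $2$, realized by the involution swapping the factors) and only a handful of candidates arise; for the toral cases $\Unitary(1)$ and $\Unitary(1)\times\Unitary(1)$, the finite quotient involves the Weyl group $W(C_2)$ of signed permutations acting on the maximal torus, and the bulk of the $55$ candidates comes from here.

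For the third stage, for each pair $(G^0,H)$ produced above I would test $\mathrm{(ST3)}$ by computing the moments $\int_C \Tr(g)^a\overline{\Tr(g)}^b\,\mu(g)$ on each connected component $C$ of the candidate $G$; since the irreducible characters of $\GL_4(\C)$ are polynomial expressions in $\Tr$ and $\overline{\Tr}$, finitely many such moments suffice to test integrality of every character expectation. Candidates that fail integrality on some component are discarded, leaving precisely $55$ groups with the component counts stated. The principal obstacle is stage 2: enumerating the subgroups of $N/G^0$ up to $N$-conjugation, particularly for $G^0=\Unitary(1)$, is combinatorially intricate and requires careful tracking of which outer $\USp(4)$-conjugations collapse apparently-distinct finite extensions. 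Stage 3 then performs the final cut, and it is precisely the rationality axiom $\mathrm{(ST3)}$ that sharpens the a priori larger list of finite extensions into the exact count appearing in the theorem.
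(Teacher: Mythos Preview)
The paper itself does not prove this theorem; it is stated with a citation to \cite{FKRS12} and no proof is given in the survey. Your three-stage outline (determine $G^0$ from the Hodge circle, enumerate finite extensions inside the normalizer, then cut by the rationality axiom) is indeed the strategy followed in \cite{FKRS12}, so at the level of overall architecture you are on the right track.

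That said, your description of Stage~2 contains a genuine error that would derail the enumeration. For $G^0=\Unitary(1)$ (the diagonal Hodge circle $\diag(u,u,u^{-1},u^{-1})$), the quotient $N_{\USp(4)}(G^0)/G^0$ is \emph{not} the finite Weyl group $W(C_2)$: the centralizer of this one-dimensional torus already contains a copy of $\SU(2)$ acting on each eigenspace, so $N/G^0$ is a positive-dimensional Lie group (essentially $\SU(2)/\{\pm 1\}$ extended by an order-$2$ element). The $32$ groups with this identity component arise from classifying \emph{finite} subgroups of this continuous quotient, which brings in the finite subgroups of $\SU(2)$ (cyclic, binary dihedral, binary tetrahedral, binary octahedral) and is where the bulk of the work in \cite{FKRS12} lies. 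Treating $N/G^0$ as a finite Weyl group here would miss almost everything. A smaller slip: for $G^0=\Unitary(1)\times\SU(2)$ the extra $\Z/2$ cannot come from ``swapping the factors'' since the factors are non-isomorphic; it comes from the normalizer of the $\Unitary(1)$ inside the ambient $\SU(2)$ in which it sits.

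Finally, in Stage~3 the irreducible characters of $\GL_4(\C)$ are not polynomials in $\Tr(g)$ and $\overline{\Tr(g)}$ alone; one needs traces of exterior or symmetric powers (equivalently, the coefficients of the characteristic polynomial). For $G\subseteq\USp(4)$ this amounts to testing integrality of the joint moments of the two independent coefficients of $\overline L_\p$, which is how \cite{FKRS12} carries out the rationality cut in practice.
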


The weight 1 classification of Theorem \ref{theorem: classification w=1} turns out to be very similar to the following weight 3 classification (which, in fact, turns out to be easier).

\begin{theorem}[\cite{FKS12}] If $\mathrm{(ST1)}$, $\mathrm{(ST2)}$, $\mathrm{(ST3)}$ are true, then the Sato-Tate group of a selfdual motive with rational coefficients of weight 3 and Hodge numbers $h^{3,0}=h^{2,1}=h^{1,2}=h^{0,3}=1$ is conjugate to one of $26$ particular groups. The possible connected components of these groups are:
$$
\Unitary(1),\medspace \SU(2),\medspace \Unitary(1) \times \Unitary(1),\medspace \Unitary(1) \times \SU(2),\medspace \Unitary(2),\medspace \SU(2) \times \SU(2),\medspace \USp(4)\,.
$$
The number of groups (up to conjugation) with each connected component is $10$, $1$, $8$, $2$, $2$, $2$, $1$, respectively.
\end{theorem}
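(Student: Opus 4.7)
The motive has rank $m=h^{3,0}+h^{2,1}+h^{1,2}+h^{0,3}=4$ and odd weight, so axiom $\mathrm{(ST1)}$ forces $G\subseteq \USp(4)$. The Hodge circle $\theta\colon \Unitary(1)\to G^0$ demanded by $\mathrm{(ST2)}$ must have eigenvalues $u^3,u,u^{-1},u^{-3}$, each with multiplicity one. The plan is to carry out the classification in two stages: first determine the possibilities for the identity connected component $G^0$, and then, for each such $G^0$, enumerate the compatible component groups $G/G^0$ satisfying the rationality axiom $\mathrm{(ST3)}$.

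For the first stage, I would start from the classification of connected compact Lie subgroups of $\USp(4)$ up to conjugation, of which there are only finitely many (classified by their Lie algebras and the embeddings into $\mathfrak{sp}(4)$). For each such subgroup $H$, I would check two things: (a) whether $H$ admits a homomorphism from $\Unitary(1)$ whose image on the tautological representation $\C^4$ has eigenvalues $u^{\pm 1},u^{\pm 3}$; and (b) whether the conjugates inside $H$ of such a Hodge circle generate a dense subgroup of $H$. Condition (b) immediately rules out proper tori containing a central Hodge circle (e.g.\ the full center of $\Unitary(2)$ in weight $1$ is excluded, but in our setting $\mathrm{diag}(u^3,u)$ is a regular element of $\Unitary(2)$, so $\Unitary(2)$ survives). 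A direct case analysis through the subgroups
$\Unitary(1)$, $\SU(2)$, $\Unitary(1)\times \Unitary(1)$, $\Unitary(1)\times \SU(2)$, $\Unitary(2)$, $\SU(2)\times \SU(2)$, $\USp(4)$
(with the various inequivalent embeddings) should show that exactly these seven appear, with $\Unitary(2)$ being the new case compared with Theorem \ref{theorem: classification w=1}.

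For the second stage, fix a candidate $G^0$ and let $N:=N_{\USp(4)}(G^0)$ be its normalizer in $\USp(4)$. Any admissible $G$ sits between $G^0$ and $N$, so the finite component group $G/G^0$ is a subgroup of $N/G^0$. I would compute $N/G^0$ explicitly in each of the seven cases (for instance, $N_{\USp(4)}(\Unitary(1))/\Unitary(1)$ can be described via its action on the Hodge circle and will be the main source of multiplicity), enumerate the subgroups of $N/G^0$ up to conjugation, and for each candidate $G$ test the rationality condition $\mathrm{(ST3)}$. Here the natural test characters are $\chi=\Tr$ and $\chi=|\Tr|^2$ (and the symmetric/exterior squares), evaluated as averages over each connected component of $G$; only finitely many of these integer-moment conditions suffice to rule out a candidate, because all characters of $\GL_4(\C)$ are polynomial combinations of the elementary ones. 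Tallying the survivors should yield the counts $10, 1, 8, 2, 2, 2, 1$ announced in the statement, summing to $26$.

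The main obstacle will be the second stage: an exhaustive and well-organized case analysis of the finite subgroups of $N/G^0$ together with the verification of the component-wise integrality condition in $\mathrm{(ST3)}$. The Hodge condition, being weaker here than in weight $1$ in the sense that the Hodge circle is regular inside $\Unitary(2)$ and $\Unitary(1)\times \Unitary(1)$, forces certain finite symmetries to preserve a pair of distinguished weight spaces, which restricts the candidate normalizers considerably and explains why the total count $26$ is smaller than the $55$ appearing in Theorem \ref{theorem: classification w=1}. For the bulk of the bookkeeping, I would follow the strategy of \cite{FKRS12}: reduce to understanding the Hodge circle up to conjugation, identify the group generated by its $N$-translates, and then classify the extensions by finite groups, pruning at each step by $\mathrm{(ST3)}$.
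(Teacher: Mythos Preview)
The paper does not supply its own proof of this theorem: it is stated as a result quoted from \cite{FKS12}, with only the remark that the weight~$3$ classification ``turns out to be very similar to'' the weight~$1$ classification of Theorem~\ref{theorem: classification w=1} and ``in fact, turns out to be easier.'' There is therefore no proof in the paper to compare against.

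That said, your plan is exactly the standard two-stage strategy used in \cite{FKRS12} (and, one presumes, in \cite{FKS12}): first pin down the admissible identity components $G^0\subseteq\USp(4)$ via the Hodge-circle condition $\mathrm{(ST2)}$, then enumerate the finite extensions $G^0\subseteq G\subseteq N_{\USp(4)}(G^0)$ and prune by the rationality condition $\mathrm{(ST3)}$. Your observation that the weight~$3$ Hodge circle $u\mapsto\diag(u^3,u,u^{-1},u^{-3})$ is regular in $\Unitary(2)$ (so that $\Unitary(2)$ survives $\mathrm{(ST2)}$, unlike in weight~$1$) and that this same regularity cuts down the admissible normalizer quotients is the correct explanation for why the total count drops from $55$ to $26$. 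As a plan there is no gap; the work is entirely in the explicit case analysis, which is carried out in \cite{FKS12}.
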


\subsection{Classification results for abelian varieties with $g\leq 3$}\label{section: clasres}

For $A$ an abelian variety defined over a number field $F$ of dimension $g\leq 3$, the Sato-Tate axioms are known to hold. This is a consequence of the fact that strong results relating the arithmetic of $A$ and the Sato-Tate group are known, as we describe below.

\begin{theorem}\cite[Prop. 3.2]{FKRS12}\label{theorem: truth ST axioms} For the case of an abelian variety $A$ defined over a number field $F$ of dimension $g$ (weight $\omega=1$ and Hodge numbers $h^{0,1}=h^{1,0}=g$) the Sato-Tate axioms $\mathrm{(ST1)}$, $\mathrm {(ST2)}$, $\mathrm{(ST3)}$ are true if $A$ satisfies the Algebraic Sato-Tate Conjecture and the Mumford-Tate Conjecture (see \S\ref{section: MT group}). 
\end{theorem}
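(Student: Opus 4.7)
The plan is to verify the three axioms in turn. Axiom $\mathrm{(ST1)}$ is unconditional: the Weil pairing on the Tate module of $A$ gives a nondegenerate, $G_F$-equivariant alternating pairing on $H^1_{\et}(A,\Q_\ell)$ with values in the cyclotomic character, so $\varrho_A^1$ restricted to $\ker(\chi_\ell)$ factors through $\Sp_{2g}(\Q_\ell)$. Consequently $G_\ell^{1,1}\subseteq \Sp_{2g}/\Q_\ell$, and extending scalars along $\iota$ and passing to a maximal compact subgroup embeds $\ST(A)$ into a maximal compact subgroup of $\Sp_{2g}(\C)$, that is, into $\USp(2g)$.

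For $\mathrm{(ST2)}$, I would invoke the Mumford--Tate Conjecture to identify $(G_\ell^{1,1})^0$ with $\Hg(A)\times_\Q\Q_\ell$, so that $\ST(A)^0$ appears as a maximal compact subgroup of $\Hg(A)(\C)$. The complex structure $h\colon \C^*\to\GL(H_1(A_\C,\R))$ defining the Hodge decomposition restricts on $\Unitary(1)$ to a homomorphism acting on $H^{p,q}$ by $u\mapsto u^{p-q}$; since the resulting determinant is $u^g\cdot u^{-g}=1$, it factors through $\Hg(A)(\R)$, and a conjugate of it into the chosen maximal compact gives the Hodge circle $\theta\colon \Unitary(1)\to\ST(A)^0$ with the required eigenvalues and multiplicities $h^{1,0}=h^{0,1}=g$. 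Density of the $\ST(A)^0$-conjugates of $\theta(\Unitary(1))$ inside $\ST(A)^0$ is then a restatement of the defining minimality of the Mumford--Tate (and hence Hodge) group: $\Hg(A)$ is by definition the smallest $\Q$-algebraic subgroup of $\Sp_{2g}$ whose real points contain $h(\Unitary(1))$, so the subgroup generated by all conjugates of $\theta(\Unitary(1))$ is already Zariski dense in $\Hg(A)^0$.

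For $\mathrm{(ST3)}$, the Algebraic Sato--Tate Conjecture furnishes a $\Q$-form $\AST(A)$ of $G_\ell^{1,1}$, and in particular makes the finite component group $\AST(A)/\AST(A)^0$ act $\Q$-rationally on the invariant subspace $V^{\ST(A)^0}$ of any $\Q$-rational representation $V$ of $\GL_{2g}(\C)$. A direct unwinding of the Haar integral shows that for a component $C$ with representative $g_C$, the expected value
$$
\int_{g\in C}\chi(g)\,\mu(g)
$$
equals, up to the normalization $|\ST(A)/\ST(A)^0|$, the trace $\Tr(g_C\mid V^{\ST(A)^0})$. This is the trace of a finite-order element on a $\Q$-rational representation of a finite group, and hence a rational algebraic integer, which gives the desired integrality.

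The main obstacle is $\mathrm{(ST3)}$: one must descend the component-group action on $V^{\ST(A)^0}$ to $\Q$ in a way compatible with the chosen Haar-measure normalization, and both steps rely on a careful use of the $\Q$-structure provided by the Algebraic Sato--Tate Conjecture together with the Mumford--Tate identification of $\ST(A)^0$. By contrast, $\mathrm{(ST1)}$ is a direct consequence of the Weil pairing, and $\mathrm{(ST2)}$ is essentially formal once the Mumford--Tate identification is in hand.
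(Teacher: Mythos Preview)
Your proposal is correct and follows exactly the approach of the paper's (brief) sketch: $\mathrm{(ST1)}$ from the Weil pairing, $\mathrm{(ST2)}$ from the Mumford--Tate Conjecture together with Hodge theory, and $\mathrm{(ST3)}$ from the $\Q$-rationality supplied by the Algebraic Sato--Tate Conjecture. One small clarification on $\mathrm{(ST3)}$: with the intended normalization (``expected value'' means the \emph{average} over the component $C$), the integral is exactly $\Tr\bigl(g_C\mid V^{\ST(A)^0}\bigr)$ with no extra factor of $|\ST(A)/\ST(A)^0|$, and it is this exact equality that makes the quantity a rational algebraic integer, hence an integer.
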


\begin{proof}[Sketch of proof]
Loosely speaking, one should see (ST1) as a consequence of the Weil Pairing, (ST2) as a consequence of the Mumford-Tate Conjecture and Hodge Theory, and (ST3) as a consequence of the algebraicity of the Sato-Tate group.
\end{proof}

\begin{theorem}\cite[Thm. 6.1, Thm 6.10.]{BK12}\label{theorem: truth MTAST} For an abelian variety $A$ of dimension $g\leq 3$ defined over a number field $F$, the Algebraic Sato-Tate Conjecture and the Mumford-Tate Conjecture are true. 
\end{theorem}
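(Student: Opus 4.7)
The plan is a case-by-case analysis driven by Albert's classification of the endomorphism algebra $D:=\End(A_{\overline F})\otimes \Q$. For $g\leq 3$ the list of admissible Albert types is short (trivial, quadratic imaginary, real quadratic, quaternion, CM field, etc.), which makes an exhaustive approach tractable. The candidate algebraic group over $\Q$ to propose for $\AST(A)$ is the \emph{twisted Lefschetz group} $\TL(A)$, built from the Lefschetz group $L(A)$ --- defined as the connected component of the centralizer of $D$ inside $\Sp_{2g}/\Q$, which by construction lives over $\Q$ --- together with the finite Galois action that records how $\Gal(K/F)$ permutes the endomorphisms, where $K/F$ is the minimal finite Galois extension over which all elements of $\End(A_{\overline F})$ are defined.

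First, I would establish the Mumford-Tate Conjecture by showing that $\Hg(A)=L(A)$ (as $\Q$-algebraic groups) case by case. The upper bound $\Hg(A)\subseteq L(A)$ follows because the Hodge group commutes with all endomorphisms; the opposite inclusion is the substantive content and is checked, in each Albert type, by a Hodge-theoretic dimension count exploiting the rigidity of polarized Hodge structures of small rank. Combined with Deligne's inclusion $(G_\ell^{1,1})^0\subseteq \Hg(A)\times_\Q\Q_\ell$ and the Faltings isogeny theorem (which identifies the centralizer of the Galois image with $D\otimes\Q_\ell$, forcing $(G_\ell^{1,1})^0$ into $L(A)\times_\Q\Q_\ell$), one obtains the chain of inclusions $(G_\ell^{1,1})^0\subseteq \Hg(A)\times\Q_\ell \subseteq L(A)\times\Q_\ell$ with equal outer terms, yielding the Mumford-Tate Conjecture and also the connected-component part of the Algebraic Sato-Tate Conjecture.

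Next, I would upgrade the connected statement to the full Algebraic Sato-Tate Conjecture by controlling the component group. The natural surjection $G_F\twoheadrightarrow \Gal(K/F)$ induces a surjection $G_\ell^{1,1}\twoheadrightarrow \Gal(K/F)$; the key claim is that this identifies $G_\ell^{1,1}/(G_\ell^{1,1})^0$ with $\Gal(K/F)$, independently of $\ell$. One proves this by combining the case-by-case computation of $L(A)$ with the observation that the twist of $L(A)$ by $\Gal(K/F)$ captures exactly the action of Galois on $D$, hence on the Tate module modulo the connected monodromy. Writing $\TL(A)$ as the resulting $\Q$-algebraic extension of $\Gal(K/F)$ by $L(A)$, one then checks $G_\ell^{1,1}=\TL(A)\times_\Q\Q_\ell$ for every $\ell$, using that $L(A)$, the extension $K/F$, and the Galois action are all $\ell$-independent data.

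The main obstacle I expect lies in Step~one, specifically in the delicate Albert types for $g=3$: the fake elliptic curve situations (quaternionic endomorphisms over a totally real field) and the mixed CM cases where $D$ is a division algebra over a CM field. Here the equality $\Hg(A)=L(A)$ is not formal and requires a careful dimension count together with absolute simplicity reductions; classical work of Tankeev, Ribet, and Moonen--Zarhin provides the input, but assembling a uniform statement across all types is the technical heart of the argument. A secondary subtlety is ensuring that the twisted Lefschetz construction in Step~three produces an honest $\Q$-algebraic group (rather than merely a collection of compatible $\Q_\ell$-groups), which requires verifying that the cocycle defining the twist descends to $\Q$; this is where the rationality of $L(A)$ and the $\ell$-independence of $\Gal(K/F)$ are crucial.
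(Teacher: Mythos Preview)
Your proposal is correct and follows essentially the same approach as the paper's sketch: both take the twisted Lefschetz group $\TL(A)$ as the candidate for $\AST(A)$ and establish $G_\ell^{1,1}=\TL(A)\times_\Q\Q_\ell$ for $g\leq 3$ by a case-by-case analysis over the Albert classification, invoking the existing Mumford--Tate literature (your Tankeev, Ribet, Moonen--Zarhin references are exactly what the paper's phrase ``building on much existing literature on Mumford--Tate groups'' points to). One small caution on presentation: the chain $(G_\ell^{1,1})^0\subseteq \Hg(A)\times\Q_\ell\subseteq L(A)\times\Q_\ell$ does not have ``equal outer terms'' from Deligne and Faltings alone---Faltings gives only that the commutant of the image is $D\otimes\Q_\ell$, not that the image fills $L(A)\times\Q_\ell$---so the lower bound genuinely requires the case-by-case Mumford--Tate input you cite later, and should be stated as such rather than as a formal consequence.
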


\begin{proof}[Sketch of proof] Consider the \emph{Twisted Lefschetz group} of $A$, which is defined as the union
$$
\TL(A):=\bigcup_{\tau\in G_F} \Lef(A,\tau)\,,
$$
where
$$
\Lef(A,\tau):=\{\gamma\in\Sp_{2g}\,|\, \gamma^{-1}\alpha\gamma={}^{\tau}\alpha \text{ for all } \alpha\in \End(A_\Qbar) \}\,.
$$
Here one should see $\alpha$ as an element of $\End(H_1(A_\C,\Q))$.
There is an obvious inclusion of $G_{\ell}^{1,1}$ in $\TL(A)\times_\Q\Q_\ell$, but in general this inclusion can be strict, as it happens with the famous Mumford examples\footnote{Indeed, for such examples $G_{\ell}^{1,1}\subsetneq \Sp_{2g}/\Q_\ell$ and $\End_\Qbar(A)=\Z$, from which it follows that $\TL(A)=\Sp_{2g}/\Q$.} in dimension $g=4$ (see \cite{Mum69}).
However, for $g\leq 3$, building on much existing literature on Mumford-Tate groups, Banaszak and Kedlaya can prove that $G_{\ell}^{1,1}= \TL(A)\times_\Q\Q_\ell$, and thus Conjecture~\ref{conjecture: AST} holds with $\AST(A)=\TL(A)$.
\end{proof}

As a consequence, for an abelian variety of dimension $g\leq 3$, the Sato-Tate axioms (ST1), (ST2), (ST3) are true. In our attemp to reach a classification of Sato-Tate groups of abelian varieties of dimension $g\leq 3$, we want to further investigate the arithmetic of the abelian variety in terms of the Sato-Tate group. For this, we introduce the  notion of \emph{Galois endomorphism type} of an abelian variety.

\begin{definition}\label{definiton: Galois type}
Consider pairs $[G,E]$ in which $G$ is a finite group and $E$ is a finite-dimensional
$\R$-algebra equipped with an action of $G$ by $\R$-algebra automorphisms.
An \emph{isomorphism} between two such pairs $[G,E]$ and $[G', E']$ consists of an isomorphism
$G \simeq G'$ of groups and an equivariant isomorphism $E \simeq E'$ of $\R$-algebras. For an abelian variety $A$ defined over $F$, let $K/F$ denote the minimal extension over which all the endomorphisms of $A$ are defined.
The \emph{Galois endomorphism type} associated to~$A$ is the isomorphism class of the pair
$[\Gal(K/F),\End(A_K)\otimes{\R}]$.
Note that abelian varieties defined over different number fields may have the same Galois endomorphism type.
\end{definition}

The next result will be crucial for our classification.

\begin{theorem}\cite[Prop. 2.19]{FKRS12}\label{theorem: ST->GT} For an abelian variety $A$ of dimension $g\leq 3$ defined over a number field $F$, the Sato-Tate group uniquely determines the Galois endomorphism type.
\end{theorem}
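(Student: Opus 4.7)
The plan is to extract the two pieces of data comprising the Galois endomorphism type---the finite group $\Gal(K/F)$ and the $\R$-algebra $\End(A_K)\otimes\R$ with its $\Gal(K/F)$-action---directly out of the group-theoretic structure of $\ST(A)$. The key leverage is Theorem \ref{theorem: truth MTAST}, which for $g\leq 3$ yields the identification $\AST(A)=\TL(A)$ together with the truth of the Mumford-Tate Conjecture, so that $\ST(A)$ is realized as a maximal compact subgroup of $\TL(A)(\C)$ and $\ST(A)^0$ as a maximal compact subgroup of the Lefschetz centralizer $\Lef(A,1)(\C)$ of $\End(A_K)\otimes\Q$ inside $\Sp_{2g}$.

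The first step is to recover $\Gal(K/F)$ as the component group $\pi_0(\ST(A))$. Unwinding the definition $\TL(A)=\bigcup_{\tau\in G_F}\Lef(A,\tau)$, a coset $\Lef(A,\tau)$ is nonempty precisely when $\tau$ preserves $\End(A_{\Qbar})=\End(A_K)$, and coincides with $\Lef(A,1)$ precisely when $\tau$ acts trivially on it; by the minimality of $K$, this produces a canonical isomorphism $\pi_0(\TL(A))\simeq \Gal(K/F)$, which descends to $\pi_0(\ST(A))\simeq \Gal(K/F)$ upon passing to maximal compact subgroups.

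The second step is to recover $\End(A_K)\otimes\R$ together with its Galois action via a double-commutant argument. Since $\Lef(A,1)$ is by construction the centralizer of the semisimple $\Q$-algebra $\End(A_K)\otimes\Q$ inside $\Sp_{2g}/\Q$, its own commutant inside $\End_\Q(H_1(A_\C,\Q))$ returns $\End(A_K)\otimes\Q$. Because $\ST(A)^0$ is Zariski-dense in $\Lef(A,1)(\C)$ (being a maximal compact subgroup of a connected complex reductive group), taking its commutant inside $M_{2g}(\R)$ returns $\End(A_K)\otimes\R$ as an $\R$-algebra. The conjugation action of $\pi_0(\ST(A))$ on $\ST(A)^0$ then translates, through the double commutant, into an action of $\Gal(K/F)$ on $\End(A_K)\otimes\R$ by $\R$-algebra automorphisms; the defining relation $\gamma^{-1}\alpha\gamma={}^\tau\alpha$ for $\gamma\in\Lef(A,\tau)$ identifies this induced action with the intrinsic Galois action, completing the reconstruction.

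I expect the main obstacle to lie in the double-commutant step: one has to ensure both that no information is lost in the passage from $\Lef(A,1)$ to its maximal compact subgroup $\ST(A)^0$, and that $\Lef(A,1)$ itself truly has $\End(A_K)\otimes\Q$ as its commutant. The latter relies on the collapse of the tower $\Hg(A)\subseteq\Lef(A,1)\subseteq\Sp_{2g}$ becoming tight enough in low dimension, a coincidence that breaks down in dimension $4$ on account of Mumford's examples---which is precisely where the hypothesis $g\leq 3$ is essential. To make the argument fully rigorous, a case-by-case verification against the classifications of Theorem \ref{theorem: classification w=1} (and the analogous $g=3$ list) would confirm that distinct Galois endomorphism types indeed correspond to non-conjugate Sato-Tate groups.
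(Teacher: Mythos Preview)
Your approach is essentially the same as the paper's: recover $\Gal(K/F)$ as the component group $\pi_0(\ST(A))$, recover the endomorphism algebra as the commutant of $\ST(A)^0$, and read off the Galois action from conjugation by $\ST(A)$. The paper phrases the commutant step through the Hodge group rather than the Lefschetz group, invoking the classical identity $\End(A_\C)\otimes\Q=\End(H_1(A_\C,\Q))^{\Hg(A)}$ and then using the Mumford--Tate Conjecture to identify $\ST(A)^0$ with a maximal compact subgroup of $\Hg(A)(\C)$. Since $\Hg(A)=\Lef(A,1)$ for $g\leq 3$ (this is exactly what Theorem~\ref{theorem: truth MTAST} gives), this amounts to the same thing as your double-commutant route, though the paper's formulation has the advantage of citing a known result rather than having to justify that the bicommutant of $\Lef(A,1)$ inside $\End(H_1)$ is exactly $\End(A_K)\otimes\Q$.

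There is, however, a genuine gap in your recovery of the $\R$-structure. The Sato-Tate group is only defined up to conjugacy inside $\GL_{2g}(\C)$, and $\ST(A)^0$ sits inside $\USp(2g)\subseteq\GL_{2g}(\C)$, not inside $\GL_{2g}(\R)$; so ``taking its commutant inside $M_{2g}(\R)$'' is not well-defined. What the commutant of $\ST(A)^0$ in $M_{2g}(\C)$ actually returns is $\End(A_K)\otimes\C$, and the problem of singling out the correct real form inside it is not vacuous: for instance $M_2(\C)$ admits both $M_2(\R)$ and the Hamilton quaternions as real forms, and these occur as $\End(A_K)\otimes\R$ for genuinely different Galois endomorphism types in the $g=2$ classification. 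The paper handles this by noting that the symplectic form preserved by $\ST(A)$ induces the Rosati involution on the commutant, and that $\End(A_K)\otimes\R$ is the unique $\R$-subspace of half the dimension on which the real part of the Rosati form $\psi\mapsto\Tr(\psi\psi')$ is positive definite. Your fallback to a case-by-case check against the classification would also close the gap, but the Rosati argument is what makes the proof uniform.
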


\begin{proof}[Sketch of proof]
The proof is built on the following well-known property of the Hodge group 
$$
\End(A_\C)\otimes \Q= \End(H_1(A_\C,\Q))^{\Hg(A)}
$$
(see \cite{Rib83}, for example). By the definition of $K$, and tensoring with $\C$, we obtain
\begin{equation}\label{equation: hodge}
\End(A_K)\otimes\C = (\End(H_1(A_\C,\Q))\otimes\C)^{\ST(A)^0}\,,
\end{equation}
since $\ST(A)^0$ is a maximal compact subgroup of $\Hg(A)$. The proof of Theorem~\ref{theorem: truth MTAST} yields an isomorphism $\Gal(K/F)\simeq \ST(A)/\ST(A)^0$. We can thus recover the action of $\Gal(K/F)$ on the left hand side of (\ref{equation: hodge}) by means of the action of $\ST(A)$ on the right hand side of (\ref{equation: hodge}). To recover $\End(A_\C)\otimes \R$, note that this is the unique $\R$-subspace of $\End(A_K)\otimes \C$ of half the dimension over which the real part of the Rosati form is positive definite\footnote{Let $\phi\colon A\rightarrow \hat A$ be a polarization on $A$. The Rosati form is defined in the following way
$$
f_R\colon \End(A_K)\otimes\C \rightarrow \C\,,\qquad f_R(\psi)=\Tr(\psi\circ\psi'; H_1(A_\C,\Q))\,,
$$
where $'$ denotes the Rosati involution, that is, $\psi':=\phi^{-1}\circ \hat \psi \circ \phi$.}.
\end{proof}

\begin{theorem}[g=1]\label{theorem: STgroupsg1} The Sato-Tate group and the Galois endomorphism type of an elliptic curve~$E$ defined over a number field~$F$ uniquely determine each other. They are restricted to a list of 3 possibilities, each of which occurs for a particular choice of~$E$ and~$F$.
\end{theorem}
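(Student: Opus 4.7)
The plan is to prove the theorem in three stages: first enumerate the possible Sato-Tate groups, then use the structural results already established in the paper to pin down the Galois endomorphism type attached to each, and finally exhibit an elliptic curve realizing every case.

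For the first stage, I would invoke the classification recalled in (\ref{equation: w=1}): since an elliptic curve has weight $\omega=1$ and Hodge numbers $h^{0,1}=h^{1,0}=1$, the Sato-Tate axioms $\mathrm{(ST1)}$--$\mathrm{(ST3)}$ (valid by Theorems \ref{theorem: truth ST axioms} and \ref{theorem: truth MTAST}) cut the list of admissible groups, up to conjugation in $\USp(2)=\SU(2)$, down to the three subgroups $\Unitary(1)$, $N_{\SU(2)}(\Unitary(1))$, and $\SU(2)$. This is the only place the general classification machinery is really needed.

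For the second stage, I would compute the Galois endomorphism type $[\Gal(K/F),\End(E_K)\otimes \R]$ in each case by exploiting the identity
\begin{equation*}
\End(E_K)\otimes\C = (\End(H_1(E_\C,\Q))\otimes\C)^{\ST(E)^0}
\end{equation*}
together with the isomorphism $\Gal(K/F)\simeq \ST(E)/\ST(E)^0$ (both supplied by the proof of Theorem \ref{theorem: ST->GT}), and then descending to $\R$ via the Rosati form. For $\ST(E)=\SU(2)$ the invariants give only scalars, so $[\Gal(K/F),\End(E_K)\otimes\R]\simeq [1,\R]$. For $\ST(E)^0=\Unitary(1)$ the centralizer inside $M_2(\C)$ is a copy of $\C$, and selfduality under the Rosati involution shows $\End(E_K)\otimes\R\simeq \C$; the distinction between $\ST(E)=\Unitary(1)$ and $\ST(E)=N_{\SU(2)}(\Unitary(1))$ is then reflected exactly by whether $\Gal(K/F)$ is trivial or of order $2$, with the nontrivial element acting as complex conjugation on $\C$. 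These three types are manifestly pairwise non-isomorphic, so the assignment $\ST(E)\mapsto [\Gal(K/F),\End(E_K)\otimes\R]$ is injective, and combined with Theorem \ref{theorem: ST->GT} this gives a bijection between the allowed Sato-Tate groups and the Galois endomorphism types.

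For the third stage, realization is classical: a non-CM elliptic curve over $\Q$ (say $y^2=x^3-x+1$) has $\ST(E)=\SU(2)$ by Serre's open image theorem, as already used in the $g=1$ non-CM discussion; a CM elliptic curve over its CM field, e.g.\ $y^2=x^3-x$ over $\Q(i)$, has $\ST(E)=\Unitary(1)$ by the discussion in \S\ref{section: STCM} for $g=1$; and the same curve over $\Q$ realizes $N_{\SU(2)}(\Unitary(1))$ since $K=\Q(i)\not\subseteq \Q$ introduces the outer involution. The main obstacle is really the second stage, specifically the careful identification of the $\R$-algebra structure (not just the $\C$-algebra structure) together with the $\Gal(K/F)$-action from the Sato-Tate data; once one recognizes that the Rosati positivity condition picks out a unique $\R$-form of the centralizer, everything falls into place.
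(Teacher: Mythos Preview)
Your proposal is correct and follows essentially the same route as the paper: list the three admissible groups from (\ref{equation: w=1}) via the Sato--Tate axioms, apply the effective form of Theorem~\ref{theorem: ST->GT} to read off the three Galois endomorphism types $[C_1,\C]$, $[C_2,\C]$, $[C_1,\R]$, note they are pairwise distinct, and realize each by the standard CM/non-CM examples. One small slip worth fixing: for $\ST(E)^0=\Unitary(1)$ the $\ST(E)^0$-invariants in $M_2(\C)$ form the diagonal algebra $\C\times\C$, not ``a copy of $\C$''; your Rosati-positivity descent then correctly singles out $\C$ as the real form, so the conclusion stands.
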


\begin{proof} One implication is given by Theorem \ref{theorem: ST->GT}. Moreover, the proof of that theorem is effective, i.e. from the Sato-Tate group one can compute the Galois endomorphism type. The 3 Sato-Tate groups 
$$
\Unitary(1),\qquad N_{\SU(2)}(\Unitary(1)),\qquad \SU(2)
$$
listed in (\ref{equation: w=1}) give rise to the 3 following Galois endomorphism types
$$
[C_1,\C],\qquad [C_2,\C], \qquad [C_1,\R]\,.
$$
Here, $C_1$ denotes the trivial group, and $C_2$ is the group of 2 elements. 
These Galois endomorphism types respectively correspond to an elliptic curve defined over $F$ with CM defined over $F$, to an elliptic curve with CM but not defined over $F$, and to an elliptic curve without complex multiplication.
\end{proof}

\begin{theorem}[g=2]\label{theorem: STgroupsg2} \cite[Thm. 4.3]{FKRS12} The Sato-Tate group and the Galois endomorphism type of an abelian surface~$A$ defined over a number field $F$ uniquely determine each other. They are restricted to a list of 52 possibilities, each of which occurs for a particular choice of $A$ and $F$.
\end{theorem}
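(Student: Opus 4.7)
The plan is to leverage Theorem \ref{theorem: classification w=1} as the starting point and then refine that abstract classification to those groups which actually appear as $\ST(A)$ for an abelian surface. First, combining Theorems \ref{theorem: truth ST axioms} and \ref{theorem: truth MTAST} (both available since $g=2\leq 3$), the Sato-Tate group of any abelian surface $A/F$ satisfies axioms $\mathrm{(ST1)}$, $\mathrm{(ST2)}$, $\mathrm{(ST3)}$. Since the Hodge numbers of $H^1_\mathrm{\acute et}$ are $h^{0,1}=h^{1,0}=2$, Theorem \ref{theorem: classification w=1} then restricts $\ST(A)$ to a list of $55$ candidate groups, partitioned by their connected component among $\Unitary(1)$, $\SU(2)$, $\Unitary(1)\times\Unitary(1)$, $\Unitary(1)\times\SU(2)$, $\SU(2)\times\SU(2)$, and $\USp(4)$.

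Next, invoke Theorem \ref{theorem: ST->GT} to attach, to each such Sato-Tate group $G$, a Galois endomorphism type $[\Gal(K/F),\End(A_K)\otimes\R]$ computed via the recipe $\End(A_K)\otimes\C=(\End(H_1(A_\C,\Q))\otimes\C)^{G^0}$ together with the identification $\Gal(K/F)\simeq G/G^0$. The key point is that this recipe is purely group-theoretic: applied formally to each of the $55$ candidates it produces a pair $[G/G^0, E]$ in which the action of $G/G^0$ on $E$ and the real structure of $E$ are determined intrinsically. One would then verify case by case that distinct candidates yield non-isomorphic pairs, proving that the assignment $\ST(A)\mapsto [\Gal(K/F),\End(A_K)\otimes\R]$ is injective on its image and hence that the Galois endomorphism type determines the Sato-Tate group in turn. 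This gives the ``determine each other'' part of the statement.

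The main obstacle is then to eliminate precisely three of the $55$ candidates because no abelian surface can realize them. Here one must use information beyond the Sato-Tate axioms: Albert's classification of endomorphism algebras of abelian varieties equipped with a Rosati involution, together with constraints coming from the allowable actions of the absolute Galois group on such algebras (e.g., constraints on which cyclic or dihedral groups can act faithfully on a given real étale algebra compatibly with a polarization). Translating each candidate $G$ through the formal recipe above and confronting the resulting pair $[\Gal(K/F),\End(A_K)\otimes\R]$ with Albert's list (and with the subtler requirement that $K/F$ actually arise as a minimal field of definition of endomorphisms of an abelian surface over a number field) rules out exactly three cases, leaving $52$.

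Finally, one must verify that each of the surviving $52$ groups does occur. This is done by exhibiting, for each Galois endomorphism type in the resulting list, an explicit abelian surface $A/F$ realizing it: typically Jacobians of well-chosen genus-$2$ curves, products $E_1\times E_2$ of elliptic curves with prescribed CM and twisting behavior, Weil restrictions, and base-change twists account for the generic components, while the discrete component structure is realized by adjusting $F$ so that $K/F$ has the prescribed Galois group. Cross-checking the numerically computed normalized Frobenius moments against the Haar-measure moments of the candidate $G$ (using the effective form of Theorem \ref{theorem: equidistribution} implicit in the examples) confirms the identification in each case, completing the proof.
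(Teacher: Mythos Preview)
Your outline matches the paper's own sketch almost step for step: combine Theorems~\ref{theorem: truth ST axioms} and~\ref{theorem: truth MTAST} so that the Sato-Tate axioms hold, invoke Theorem~\ref{theorem: classification w=1} to get $55$ candidates, pass to Galois endomorphism types via the effective recipe behind Theorem~\ref{theorem: ST->GT}, exclude three, and realize the remaining $52$ by explicit Jacobians of genus-$2$ curves.

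Two small corrections are worth flagging. First, the exclusion of the three spurious candidates is not a consequence of Albert's classification alone (which only lists the possible endomorphism algebras with positive involution); what is actually invoked in \cite[\S4.4]{FKRS12} is work of Shimura showing that certain specific endomorphism structures cannot be realized by an abelian \emph{surface} over a number field. Second, your closing sentence suggests that matching numerical Frobenius moments against Haar moments ``completes the proof'' that a given curve has the intended Sato-Tate group. It does not: numerical moment data is only heuristic corroboration. The rigorous argument is that for each explicit $A/F$ one computes $\End(A_K)\otimes\R$ together with the $\Gal(K/F)$-action directly, and then the injectivity of the map $\ST(A)\mapsto[\Gal(K/F),\End(A_K)\otimes\R]$ (which you have already established) pins down $\ST(A)$.
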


\begin{proof}[Sketch of proof] As before, one implication is given by Theorem \ref{theorem: ST->GT}. Combining Theorem \ref{theorem: classification w=1}, Theorem \ref{theorem: truth ST axioms}, and Theorem \ref{theorem: truth MTAST}, we obtain a list 55 putative Sato-Tate groups. Making effective the proof of Theorem \ref{theorem: ST->GT}, from the 55 putative Sato-Tate groups, one obtains 55 different putative Galois endomorphism types (see \cite[Table 8]{FKRS12}). For 52 of these putative Galois endomorphism types one finds abelian surfaces realizing them (see \cite[Table 11]{FKRS12}; in fact, one can find genus 2 curves\footnote{Numerical tests of the Sato-Tate Conjecture have been performed for these 52 genus 2 curves. To visualize the matching between the numerical tests and the theoretical predictions see the animations at
\url{http://math.mit.edu/~drew}.} whose Jacobians realize the Galois endomorphism types). The remaining 3 Galois endomorphism types correspond to structures of endomorphism algebras of abelian varieties that were showed not to exist in the work of Shimura (see \cite[\S4.4]{FKRS12}). 
\end{proof}

\begin{remark}
Not all possibilities of Theorem~\ref{theorem: STgroupsg1} and Theorem~\ref{theorem: STgroupsg2} arise when we fix the ground field $F$. For example, only 2 Sato-Tate groups arise for elliptic curves defined over $\Q$, and the Sato-Tate group of an abelian surface defined over~$\Q$ is restricted to a list of~$34$ possibilities, all of which can occur for a particular choice of $A/\Q$ (see \cite[Thm. 4.3]{FKRS12}). 
\end{remark}

At the moment, we lack of a theorem of the style of the two previous ones for $g=3$. A preliminary computation has shown, however, that several hundreds of Sato-Tate groups and Galois endomorphism types arise in dimension 3. We finish by mentioning some results establishing the validity of the conjecture in some particular cases. 

\begin{remark}[g=2]
Among the 34 Galois endomorphism types that arise for $F=\Q$,~$18$ correspond to abelian surfaces $A/\Q$ such that $\End(A)$ strictly contains~$\Z$ (this happens if and only if either $A$ is isogenous over $\Q$ to the square of an elliptic curve or if $A$ is of $\GL_2-$type). Building on the description given by Ribet \cite{Rib92} of the Tate module of an abelian variety of $\GL_2-$type, J. Gonz\'alez \cite{Gon11} has proved equidistribution of the normalized local factors in~15 of these~18 cases.
Among the~34 Sato-Tate groups that arise over $\Q$, $18$ have connected component isomorphic to $\Unitary(1)$. Each of these groups can be achieved by considering the Jacobian of a twist of either $y^2=x^5-x$ or $y^2=x^6 +1$. For such Jacobians the Sato-Tate Conjecture is known to hold (see \cite{FS12}). We note that not all such Jacobians are of $\GL_2$-type, and therefore not all these cases are covered by \cite{Gon11}. Recently C. Johansson \cite{Joh13} has proved that the Sato-Tate Conjecture holds true for many \emph{nongeneric}\footnote{Recall that one says that an abelian surface $A$ is generic if $\End(A_{\Qbar})=\Z$, or if, equivalently, $\ST(A)=\USp(4)$.} cases of abelian surfaces.
\end{remark}

\begin{remark}[$g\geq 3$]
The Sato-Tate Conjecture has been proved to hold for Jacobians of twists of the Fermat and the Klein curves (see \cite{FLS12}). These are curves of genus~3. One obtains~48 different distributions when considering twists of the Fermat curve, and~22 when considering twists of the Klein curve.
In \cite{FGL13}, the Frobenius distributions and Sato-Tate groups of non-degenerate quotients of Fermat curves of prime exponent are computed (the Jacobians of such curves are abelian varieties with complex multiplication of arbitrarily large dimension). 
\end{remark}


\begin{thebibliography}{McK-Sta}

\bibitem[AS12]{AS12}
O. Ahmadi, I.E. Shparlinski, \emph{On the distribution of the number of points on algebraic curves in extensions of finite fields}, Math. Res. Lett. \textbf{17} (2012), no. 04, 689--699.

\bibitem[BK14]{BK11}
G. Banaszak and K.S. Kedlaya, \emph{An algebraic Sato-Tate group and Sato-Tate conjecture}, to appear in the Indiana University Mathematics Journal (2014).

\bibitem[BK12]{BK12} A.I. Bucur and K.S. Kedlaya \emph{The effective Sato-Tate Conjecture and applications}, available at arXiv:1301.0139 (2013).

\bibitem[BLGHT11]{BLGHT11} T. Barnet-Lamb, D. Geraghty, M. Harris, and R. Taylor,
\emph{A family of Calabi-Yau varieties and potential automorphy II},
Publ. RIMS \textbf{47} (2011), 29--98.

\bibitem[Del71]{Del71} P. Deligne, \emph{Formes modulaires et repr\'esentations $\ell$-adiques}, S\'eminaire Bourbaki no 355, in Lecture Notes in Mathematics, Springer, \textbf{179}, 1971, 139--172.

\bibitem[Del74]{Del74}
P. Deligne, \emph{La conjecture de Weil. I.}, Publications Math\'ematiques de l'IH\'ES \textbf{43} (1974), 273--307.

\bibitem[Del82]{Del82} P. Deligne, \emph{Hodge cycles on abelian varieties (notes by J.S. Milne)}, Lecture Notes in Mathematics \textbf{900} (1982), 9--100.

\bibitem[FGL14]{FGL13}
F. Fit\'e, J. Gonz\'alez, and J-C. Lario, \emph{Frobenius distribution for quotients of Fermat curves of prime degree}, available at arXiv:1403.0807 (2014).

\bibitem[FH91]{FH91} W. Fulton, J. Harris, \emph{Representation Theory. A First course}, Springer-Verlag New York Inc., 1991. 

\bibitem[FKRS12]{FKRS12} F.Fit\'e, K.S. Kedlaya, A.V. Sutherland, V. Rotger, \emph{Sato-Tate distributions and Galois endomorphism modules in genus 2}, Compositio Mathematica \textbf{148}, n. 5 (2012), 1390--1442. 

\bibitem[FKS12]{FKS12} F.Fit\'e, K.S. Kedlaya, and A.V. Sutherland, \emph{Sato-Tate groups of some weight 3 motives}, available at arXiv:1212.0256 (2012).

\bibitem[FLS14]{FLS12} F. Fit\'e, E. Lorenzo, and A.V. Sutherland, \emph{Sato-Tate distributions of twists of the Fermat and Klein curves}, Preprint (2014).

\bibitem[FS14]{FS12} 
F. Fit\'e and A.V. Sutherland, \emph{Sato-Tate distributions of twists of $y^2=x^5-x$ and $y^2=x^6+1$}, to appear in Algebra \& Number Theory.

\bibitem[Gon14]{Gon11} J. Gonz\'alez, \emph{The Frobenius traces distribution for modular abelian surfaces}, The Ramanujan Journal \textbf{33}, No. 2 (2014), 247--261.

\bibitem[Hec20]{Hec20} E. Hecke, \emph{Eine neue Art von Zetafunktionen und ihre Beziehungen zur Verteilung der Primzahlen. Zweite Mitteilung}, Math. Zeit. \textbf{6} (1920), 11--51.

\bibitem[Joh13]{Joh13} C. Johansson, \emph{On the Sato-Tate conjecture for non-generic abelian surfaces}, available at arXiv:1307.6478 (2013).

\bibitem[Kat13]{Ka13} N. Katz, \emph{Sato-Tate in the higher dimensional case: elaboration of 9.5.4 in Serre's $N_X(p)$ book}, L'enseignement math\'ematique \textbf{59}, Issue 3/4 (2013), 359--377. 

\bibitem[KW09]{KW09} C. Khare, J.-P. Wintenberger \emph{Serre's modularity conjecture (I)}, Invent. math. \textbf{178} (2009), 485--504.

\bibitem[KS09] {KS09} K.S. Kedlaya, A.V. Sutherland, \emph{Hyperelliptic curves, $L$-polynomials, and random matrices}, Arithmetic, Geometry, Cryptography, and Coding Theory (AGCT 2007), Contemporary Math. {\bf 487}, Amer. Math. Soc., 2009, 119--162.

\bibitem[Kub65]{Kub65} T. Kubota, \emph{On the field extension by complex multiplication}, Transactions of the American Mathematical Society \textbf{118} (1965), 113--122.

\bibitem[Lan94]{La94} S. Lang, \emph{Algebraic Number theory}, Springer, 1994.

\bibitem[MM]{MM} K. Murty, R. Murty, \emph{The Sato-Tate conjecture and generalizations}, Current trends in Science, Platinum Jubilee Special, Indian Academy of Sciences, 2009.

\bibitem[MY11]{MT11} R. Masri, T. Yang \emph{Nonvanishing of Hecke $L$-functions for CM fields and ranks of abelian varieties}, Geometric and Functional Analysis \textbf{21} (2011), 648--679. 

\bibitem[Mum69]{Mum69} D. Mumford, {\em A note on Shimura's paper ``Discontinuous subgroups and abelian varieties''}, Math. Ann. {\bf 181} (1969), 345--351.

\bibitem[Neu92]{Neu92} J. Neukirch, \emph{Algebraische Zahlentheorie}, Springer-Verlag Berlin-Heidelberg-New York, 1992.

\bibitem[Rib77]{Rib77} K. Ribet, \emph{Galois representations attached to eigenforms with Nebentypus}, in: J.-P. Serre, D. B. Zagier (eds.), \emph{Modular Functions of one Variable V} (Bonn 1976), Lect. Notes in Math. \textbf{601}, Springer (1977), 17--52.

\bibitem[Rib80]{Rib80} K.A. Ribet, \emph{Division fields of abelian varieties with complex multiplication}, Soci\'et\'e Math\'ematique de France, 2e s\'erie, M\'emoire No. 2, 1980, p. 75--94.

\bibitem[Rib83]{Rib83} K.A. Ribet, \emph{Hodge Classes on Certain Types of Abelian Varieties}, American Journal of Mathematics \textbf{105}, No. 2 (1983), 523--538.

\bibitem[Rib92]{Rib92} K.A. Ribet, \emph{Abelian varieties over $\Q$ and modular forms},
Algebra and topology 1992 ({T}aej\u on), Korea Adv. Inst.
Sci. Tech., {T}aej\u on, 1992, 53--79.

\bibitem[Ser66]{Ser66} J.-P. Serre, \emph{Lettre \`a Armand Borel du 18/5/1966}.

\bibitem[Ser68]{Ser68} J.-P. Serre, {\em Abelian $\ell$-adic Representations and Elliptic Curves}, W.A. Benjamin Inc., 1968.

\bibitem[Ser72]{Ser72} J.-P. Serre, \emph{Propri\'et\'es galoisiennes des points d'ordre fini des courbes elliptiques}, Inventiones math. \textbf{15}, 259--331 (1972).

\bibitem[Ser77]{Ser77} J.-P. Serre, \emph{Representations $\ell$-adiques}, in S. Iyanaga (ed.), Algebraic Number Theory, Japan Society for the Promotion of Science, Kyoto University Press, 1977, 177--193.

\bibitem[Ser95]{Ser95} J.-P. Serre, \emph{Propri\'et\'es conjecturales des groupes de Galois motiviques et des repr\'esentations
l-adiques}, Motives (Seattle, WA, 1991), Proceedings of Symposia in Pure Math. \textbf{55},
Amer. Math. Soc., 1994, 377--400.

\bibitem[Ser12]{Ser12}
J.-P. Serre, \textit{Lectures on $N_X(p)$}, A.K. Peters, 2012.

\bibitem[Shi98]{Shi98} G. Shimura, \emph{Abelian varieties with complex multiplication and modular forms}, Princeton Univ. Press, 1998.

\bibitem[Sil94]{Sil94} J.H. Silverman, \emph{Advanced Topics in the Arithmetic of Elliptic Curves},
Graduate Texts in Math. \textbf{151}, Springer, 1994.

\bibitem[Yu13]{Yu13} C.-F. Yu, \emph{Mumford-Tate Conjecture for CM abelian varieties}, preprint, 2013.

\bibitem[Zar14]{Zar14} Y.G. Zarhin, \emph{Eigenvalues of Frobenius endomorphisms of abelian varieties}, available at arXiv:1312.0377 (2014).

\end{thebibliography}
\end{document}